\newcommand{\N}{\mathbb N}
\newcommand{\Z}{\mathbb Z}
\newcommand{\Q}{\mathbb Q}
\newcommand{\R}{\mathbb R}
\newcommand{\C}{\mathbb C}
\newcommand{\B}{\mathbb B}
\newcommand{\F}{\mathbb F}
\newcommand{\HH}{\mathbb H}
\newcommand{\OO}{\mathbb O}
\newcommand{\cF}{\mathcal F}
\newcommand{\sU}{\mathscr U}
\newcommand{\Lg}{\mathfrak g}
\newcommand{\LK}{K}
\newcommand{\Lk}{\mathfrak k}
\newcommand{\LN}{N}
\newcommand{\Ln}{\mathfrak n}
\newcommand{\LA}{A}
\newcommand{\La}{\mathfrak a}
\newcommand{\Hil}{\mathscr H}
\newcommand{\Ban}{\mathscr X}
\newcommand{\support}{\operatorname{supp}}
\newcommand{\co}{\operatorname{co}}
\newcommand{\sspan}{\ensuremath{\operatorname{span}}}
\newcommand{\linbeg}{\boldsymbol{\operatorname{B}}}%{\mathbb B}
\newcommand{\transposed}{\mathrm T}
\newcommand{\cont}{C}
\newcommand{\vanish}{0}
\newcommand{\cstar}{\ensuremath{C^*}}
\newcommand{\cb}{\mathrm{cb}}
\newcommand{\cpt}{\mathrm c}
\newcommand{\unit}{\boldsymbol 1}%{\mathbb 1}
\newcommand{\dd}{\mathrm d}
\newcommand{\SO}{{SO}}
\newcommand{\PGL}{{PGL}}
\newcommand{\ELL}{{L}}
\newcommand{\trip}{|\!|\!|}
\newcommand{\ip}[2]{\langle#1,#2\rangle}
\newcommand{\dual}[2]{\langle#1,#2\rangle}
\newcommand{\seta}[1]{\ensuremath{\{#1\}}}
\newcommand{\setw}[2]{\setaw{#1}{#2}}%{\seto{#1\:\big|\:#2}}
\newcommand{\setaw}[2]{\seta{#1\::\:#2}}
\newcommand{\Email}{\begingroup \def\UrlLeft{<}\def\UrlRight{>} \urlstyle{tt}\Url}
\newcommand{\mailto}[1]{\href{mailto:#1}{\Email{#1}}}
\newcommand{\contrib}[3]{#1\quad\mailto{#2}{\small\\\quad\textit{#3}}\\[1ex]}
\newcommand{\ST}{S}
\newcommand{\SK}{S}
\newcommand{\ccc}{\boldsymbol c}
\newcommand{\IA}{A}
\newcommand{\IN}{N}
\newcommand{\pA}{\LA^+}
\newcommand{\cpA}{\bar\LA^+}
\newcommand{\GG}{\SO_0(1,n)}
\newcommand{\KK}{\SO(n)}
\newcommand{\y}{g}
\newcommand{\FA}{A}
\newcommand{\A}{A}
\newcommand{\FSA}{B}
\newcommand{\BB}{B}
\newcommand{\FF}{(\ast_{m=1}^M\Z/2\Z)\ast(\ast_{n=1}^N\Z)}
\newcommand{\EE}{E}
\newcommand{\G}{G}
\newcommand{\K}{K}
\newcommand{\Y}{Y}
\newcommand{\PGLQ}{\PGL_2(\padicnum)}
\newcommand{\PSLZ}{\PGL_2(\padicint)}
\newcommand{\padicnum}{\Q_q}
\newcommand{\padicint}{\Z_q}
\newcommand{\bS}{\mathbb{S}_\h}
\newcommand{\bSbar}{\mathbb{\bar S}_\h}
\newcommand{\abS}{\mathbb{S}_{2a}}
\newcommand{\MA}{MA}
\newcommand{\MoA}{M_0A}
\newcommand{\h}{m}
\newcommand{\Gammaload}{\frac{\Gamma \left( \frac{\h}{2}+\sigma \right) \Gamma \left( \frac{\h}{2}-\sigma \right) \Gamma \left( \frac{\h}{2}+it \right) \Gamma \left( \frac{\h}{2}-it \right)}{\Gamma \left( \frac{\h}{2} \right) \Gamma \left( \frac{\h}{2} \right) \left| \Gamma \left( \frac{\h}{2}+s \right) \Gamma \left( \frac{\h}{2}-s \right) \right|}}
\newcommand{\Gammaloadd}{\frac{\Gamma \left( \frac{\h}{2}+\Re(s) \right) \Gamma \left( \frac{\h}{2}-\Re(s) \right) \Gamma \left( \frac{\h}{2}+i\Im(s) \right) \Gamma \left( \frac{\h}{2}-i\Im(s) \right)}{\Gamma \left( \frac{\h}{2} \right) \Gamma \left( \frac{\h}{2} \right) \left| \Gamma \left( \frac{\h}{2}+s \right) \Gamma \left( \frac{\h}{2}-s \right) \right|}}
\newcommand{\strip}{\setw{\sigma+it\in\C}{|\sigma|<\frac{\h}{2},\, t\in\R}}
\newcommand{\abstrip}{\setw{\sigma+it\in\C}{|\sigma|<a,\, t\in\R}}
\newcommand{\PP}{P}
\newcommand{\QQ}{Q}
\numberwithin{equation}{section}
\theoremstyle{plain}
\newtheorem{theorem}{Theorem}[section]
\newtheorem{maintheorem}[theorem]{Theorem}
\newtheorem{proposition}[theorem]{Proposition}
\newtheorem{lemma}[theorem]{Lemma}
\newtheorem{corollary}[theorem]{Corollary}
\theoremstyle{definition}
\theoremstyle{remark}
\newtheorem{remark}[theorem]{Remark}
\renewcommand{\vec}{\boldsymbol}
\renewcommand{\Re}{\mathrm{Re}}
\renewcommand{\Im}{\mathrm{Im}}
\renewcommand{\contrib}[2]{#1\quad\mailto{#2}}
\begin{document}
	\hyphenation{hil-bert-space fran-ces-co lo-rentz boun-ded func-tions haa-ge-rup pro-per-ties a-me-na-bi-li-ty re-wri-ting pro-per-ty tro-els steen-strup jen-sen de-note as-su-ming Przy-by-szew-ska ma-jo-ri-za-tion o-pe-ra-tors mul-ti-pli-ers i-so-mor-phism la-place bel-tra-mi re-pre-sen-ta-tions pro-po-si-tion the-o-rem lem-ma co-rol-la-ry}

	\title{Fourier Multiplier Norms of Spherical Functions on the Generalized Lorentz Groups}
\author{Troels Steenstrup\thanks{Partially supported by the Ph.D.-school OP--ALG--TOP--GEO.}}
\date{}
\maketitle

	\begin{abstract}
		Our main result provides a closed expression for the completely bounded Fourier multiplier norm of the spherical functions on the generalized Lorentz groups $\GG$ (for $n\geq2$). As a corollary, we find that there is no uniform bound on the completely bounded Fourier multiplier norm of the spherical functions on the generalized Lorentz groups. We extend the latter result to the groups $SU(1,n)$, $Sp(1,n)$ (for $n\geq2$) and the exceptional group $F_{4(-20)}$, and as an application we obtain that each of the above mentioned groups has a completely bounded Fourier multiplier, which is not the coefficient of a uniformly bounded representation of the group on a Hilbert space.

	\end{abstract}
	\section*{Introduction}
\label{intro1}
Let $\Y$ be a non-empty set. A function $\psi:\Y\times\Y\to\C$ is called a \emph{Schur multiplier} if for every operator $A=(a_{x,y})_{x,y\in\Y}\in\linbeg(\ell^2(\Y))$ the matrix $(\psi(x,y)a_{x,y})_{x,y\in\Y}$ again represents an operator from $\linbeg(\ell^2(\Y))$ (this operator is denoted by $M_\psi A$). If $\psi$ is a Schur multiplier it follows easily from the closed graph theorem that $M_\psi\in\linbeg(\linbeg(\ell^2(\Y)))$, and one referrers to $\|M_\psi\|$ as the \emph{Schur norm} of $\psi$ and denotes it by $\|\psi\|_S$.

Let $G$ be a locally compact group. In~\cite{Her:UneGeneralisationDeLaNotionDetransformeeDeFourier-Stieltjes}, Herz introduced a class of functions on $\G$, which was later denoted the class of \emph{Herz--Schur multipliers} on $\G$. By the introduction to~\cite{BF:Herz-SchurMultipliersAndCompletelyBoundedMultipliersOfTheFourierAlgebraOfALocallyCompactGroup}, a continuous function $\varphi:\G\to\C$ is a Herz--Schur multiplier if and only if the function
\begin{equation}
	\label{new0.3}
	\hat\varphi(x,y)=\varphi(y^{-1}x)\qquad(x,y\in\G)
\end{equation}
is a Schur multiplier, and the \emph{Herz--Schur norm} of $\varphi$ is given by
\begin{equation*}
	\|\varphi\|_{HS}=\|\hat\varphi\|_S.
\end{equation*}

In~\cite{DCH:MultipliersOfTheFourierAlgebrasOfSomeSimpleLieGroupsAndTheirDiscreteSubgroups} De Canni{\`e}re and Haagerup introduced the Banach algebra $\MA(G)$ of \emph{Fourier multipliers} of $G$, consisting of functions $\varphi:\G\to\C$ such that
\begin{equation*}
	\varphi\psi\in\FA(\G)\qquad(\psi\in\FA(\G)),
\end{equation*}
where $\FA(\G)$ is the \emph{Fourier algebra} of $\G$ as introduced by Eymard in~\cite{Eym:L'alg`ebreDeFourierD'unGroupeLocalementCompact} (the \emph{Fourier--Stieltjes algebra} $\FSA(\G)$ of $\G$ is also introduced in this paper). The norm of $\varphi$ (denoted \emph{$\|\varphi\|_{\MA(\G)}$}) is given by considering $\varphi$ as an operator on $\FA(\G)$. According to~\cite[Proposition~1.2]{DCH:MultipliersOfTheFourierAlgebrasOfSomeSimpleLieGroupsAndTheirDiscreteSubgroups} a Fourier multiplier of $G$ can also be characterized as a continuous function $\varphi:\G\to\C$ such that
\begin{equation*}
	\lambda(g)\stackrel{M_\varphi}{\mapsto}\varphi(g)\lambda(g)\qquad(g\in\G)
\end{equation*}
extends to a $\sigma$-weakly continuous operator (still denoted $M_\varphi$) on the group von Neumann algebra ($\lambda:\G\to\linbeg(\ELL^2(\G))$ is the \emph{left regular representation} and the group von Neumann algebra is the closure of the span of $\lambda(\G)$ in the weak operator topology). Moreover, one has $\|\varphi\|_{\MA(\G)}=\|M_\varphi\|$. The Banach algebra $\MoA(\G)$ of \emph{completely bounded Fourier multipliers} of $G$ consists of the Fourier multipliers of $G$, $\varphi$, for which $M_\varphi$ is completely bounded. In this case they put \emph{$\|\varphi\|_{\MoA(\G)}=\|M_\varphi\|_{\cb}$}.

In~\cite{BF:Herz-SchurMultipliersAndCompletelyBoundedMultipliersOfTheFourierAlgebraOfALocallyCompactGroup} Bo{\.z}ejko and Fendler show that the completely bounded Fourier multipliers coincide isometrically with the continuous Herz--Schur multipliers. In~\cite{Jol:ACharacterizationOfCompletelyBoundedMultipliersOfFourierAlgebras} Jolissaint gives a short and self-contained proof of the result from~\cite{BF:Herz-SchurMultipliersAndCompletelyBoundedMultipliersOfTheFourierAlgebraOfALocallyCompactGroup} in the form stated below.
\begin{proposition}[\cite{BF:Herz-SchurMultipliersAndCompletelyBoundedMultipliersOfTheFourierAlgebraOfALocallyCompactGroup},~\cite{Jol:ACharacterizationOfCompletelyBoundedMultipliersOfFourierAlgebras}]
	\label{Gilbert0}
	Let $G$ be a locally compact group and assume that $\varphi:G\to\C$ and $k\geq0$ are given, then the following are equivalent:
	\begin{itemize}
		\item [(i)]$\varphi$ is a completely bounded Fourier multiplier of $\G$ with $\|\varphi\|_{\MoA(G)}\leq k$.
		\item [(ii)]$\varphi$ is a continuous Herz--Schur multiplier on $\G$ with $\|\varphi\|_{HS}\leq k$.
		\item [(iii)]There exists a Hilbert space $\Hil$ and two bounded, continuous maps $P,Q:G\to\Hil$ such that
		\begin{equation*}
			\varphi(y^{-1}x)=\ip{P(x)}{Q(y)}\qquad(x,y\in G)
		\end{equation*}
		and
		\begin{equation*}
			\|P\|_\infty\|Q\|_\infty\leq k,
		\end{equation*}
		where
		\begin{equation*}
			\|P\|_\infty=\sup_{x\in G}\|P(x)\|\quad\mbox{and}\quad\|Q\|_\infty=\sup_{y\in G}\|Q(y)\|.
		\end{equation*}
	\end{itemize}
\end{proposition}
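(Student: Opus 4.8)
The plan is to establish the cyclic chain of implications $(\mathrm{iii})\Rightarrow(\mathrm{i})\Rightarrow(\mathrm{ii})\Rightarrow(\mathrm{iii})$. Of these, $(\mathrm{iii})\Rightarrow(\mathrm{i})$ is a Stinespring-type dilation that uses only the boundedness of $P$ and $Q$; $(\mathrm{ii})\Rightarrow(\mathrm{iii})$ is, in essence, the Grothendieck--Haagerup factorisation theorem for Schur multipliers together with a regularisation step that upgrades the factors to continuous ones; and $(\mathrm{i})\Rightarrow(\mathrm{ii})$, which carries the operator-algebraic content, is the main obstacle. Throughout I use the elementary fact that the Schur norm of a kernel is invariant under transposing the two variables and under relabelling the index set by a bijection, so that the kernels $(x,y)\mapsto\varphi(y^{-1}x)$, $(x,y)\mapsto\varphi(xy^{-1})$ and $(x,y)\mapsto\varphi(x^{-1}y)$ on $\G\times\G$ all have one and the same Schur norm.

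\emph{$(\mathrm{iii})\Rightarrow(\mathrm{i})$.} On $\ELL^2(\G)\otimes\Hil\cong\ELL^2(\G;\Hil)$ define $V,W\in\linbeg\bigl(\ELL^2(\G),\ELL^2(\G)\otimes\Hil\bigr)$ by pointwise multiplication,
\begin{equation*}
	(V\xi)(x)=\xi(x)\,P(x^{-1}),\qquad (W\xi)(x)=\xi(x)\,Q(x^{-1})\qquad\bigl(\xi\in\ELL^2(\G),\ x\in\G\bigr),
\end{equation*}
so that $\|V\|\le\|P\|_\infty$ and $\|W\|\le\|Q\|_\infty$. A direct computation using the factorisation in $(\mathrm{iii})$ gives
\begin{equation*}
	\bigl(W^\adjoint(\lambda(g)\otimes\unit)V\xi\bigr)(y)=\xi(g^{-1}y)\,\ip{P(y^{-1}g)}{Q(y^{-1})}=\varphi(g)\,\xi(g^{-1}y)=\varphi(g)\bigl(\lambda(g)\xi\bigr)(y)
\end{equation*}
for all $g\in\G$ and $\xi\in\ELL^2(\G)$, i.e.\ $W^\adjoint(\lambda(g)\otimes\unit)V=\varphi(g)\lambda(g)$. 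Hence $T\mapsto W^\adjoint(T\otimes\unit)V$ is a $\sigma$-weakly continuous, completely bounded map on the group von Neumann algebra of completely bounded norm at most $\|V\|\,\|W\|\le\|P\|_\infty\|Q\|_\infty\le k$ sending $\lambda(g)$ to $\varphi(g)\lambda(g)$; by~\cite[Proposition~1.2]{DCH:MultipliersOfTheFourierAlgebrasOfSomeSimpleLieGroupsAndTheirDiscreteSubgroups} this is precisely the statement that $\varphi\in\MoA(\G)$ with $\|\varphi\|_{\MoA(\G)}\le k$, and $\varphi$ is continuous because $\varphi(g)=\ip{P(g)}{Q(e)}$ with $P,Q$ continuous.

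\emph{$(\mathrm{i})\Rightarrow(\mathrm{ii})$.} Assume $\varphi\in\MoA(\G)$ with $\|\varphi\|_{\MoA(\G)}\le k$, so $M_\varphi$ is a normal completely bounded map on $\lambda(\G)''\subseteq\linbeg(\ELL^2(\G))$ with $\|M_\varphi\|_\cb\le k$ and $M_\varphi(\lambda(g))=\varphi(g)\lambda(g)$. By the structure theorem for normal completely bounded maps between von Neumann algebras there are a Hilbert space $\Cpt$, a normal unital $\adjoint$-representation $\pi\colon\lambda(\G)''\to\linbeg(\Cpt)$ and $V,W\in\linbeg(\ELL^2(\G),\Cpt)$ with $\|V\|\,\|W\|\le k$ and $M_\varphi(T)=W^\adjoint\pi(T)V$ for all $T$; since $\pi$ is normal we may arrange $\Cpt=\ELL^2(\G)\otimes\ell^2$ and $\pi(\lambda(g))=\lambda(g)\otimes\unit$, whence $\varphi(g)\lambda(g)=W^\adjoint(\lambda(g)\otimes\unit)V$ for all $g\in\G$. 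Integrating this over $g$ against $f\in\cont_\cpt(\G)$ yields the operator identity $\lambda(\varphi f)=W^\adjoint(\lambda(f)\otimes\unit)V$; as the integral kernel of $\lambda(\varphi f)$ is obtained from that of $\lambda(f)$ by multiplying with $(x,y)\mapsto\varphi(xy^{-1})$, comparing matrix coefficients against vectors of $\ELL^2(\G)$ --- with $f$ running through an approximate identity --- identifies the Schur multiplier of $\linbeg(\ELL^2(\G))$ (relative to the masa $\ELL^\infty(\G)$) with symbol $(x,y)\mapsto\varphi(xy^{-1})$ as having Schur norm at most $\|V\|\,\|W\|\le k$. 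By the invariance of the Schur norm recalled above, $\|\hat\varphi\|_S\le k$, so $\varphi$, which is in particular continuous, is a continuous Herz--Schur multiplier with $\|\varphi\|_{HS}\le k$. This implication --- equivalently the inequality $\|\hat\varphi\|_S\le\|\varphi\|_{\MoA(\G)}$ --- is the heart of Bo\.zejko--Fendler's theorem and the point at which I expect the real work to lie: the non-discreteness of $\G$ precludes the naive device of compressing $M_\varphi$ to finite sub-matrices and forces one through the structure theorem and the disintegration above.

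\emph{$(\mathrm{ii})\Rightarrow(\mathrm{iii})$.} Since $\hat\varphi$ is a bounded continuous Schur multiplier of $\linbeg(\ELL^2(\G))$ with $\|\hat\varphi\|_S\le k$, the measurable form of the Grothendieck--Haagerup characterisation of Schur multipliers provides a Hilbert space $\Hil_0$ and bounded measurable maps $P_0,Q_0\colon\G\to\Hil_0$ with $\|P_0\|_\infty\|Q_0\|_\infty\le k$ and $\varphi(y^{-1}x)=\ip{P_0(x)}{Q_0(y)}$ for almost every $(x,y)\in\G\times\G$. Fix a compact neighbourhood $U$ of $e$ with normalised left Haar measure $m$, put $\Hil=\ELL^2(U,\Hil_0;m)$, and define $P,Q\colon\G\to\Hil$ by $P(x)=(v\mapsto P_0(vx))$ and $Q(y)=(v\mapsto Q_0(vy))$. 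Then $\|P(x)\|\le\|P_0\|_\infty$ and $\|Q(y)\|\le\|Q_0\|_\infty$ for all $x,y$; the maps $P$ and $Q$ are continuous, being (locally) restrictions to $U$ of right translates of fixed $\Hil_0$-valued $\ELL^2$-functions, on which translation acts strongly continuously; and
\begin{equation*}
	\ip{P(x)}{Q(y)}=\int_U\ip{P_0(vx)}{Q_0(vy)}\,\dd m(v)=\int_U\varphi\bigl((vy)^{-1}(vx)\bigr)\,\dd m(v)=\varphi(y^{-1}x),
\end{equation*}
where the middle equality holds for a.e.\ $v$ once $y^{-1}x$ lies in the Haar-conull set of good points obtained by pushing the exceptional null set of the factorisation through the triangular substitution $(u,v)\mapsto(v^{-1}u,v)$ and applying Fubini, and the outer equality then holds for \emph{all} $x,y$ because both sides are continuous and agree almost everywhere. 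This is $(\mathrm{iii})$ with $\|P\|_\infty\|Q\|_\infty\le k$, closing the cycle; the genuine extra point here, beyond Grothendieck--Haagerup, is the passage from merely bounded measurable factors to continuous ones, achieved by the averaging above.
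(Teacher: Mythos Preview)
The paper does not prove this proposition: it is quoted in the introduction with attribution to Bo\.zejko--Fendler and Jolissaint, and used thereafter as a black box. There is therefore no in-paper argument to compare your attempt against.

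For what it is worth, your outline is essentially Jolissaint's strategy, and $(\mathrm{iii})\Rightarrow(\mathrm{i})$ is clean and correct. Two places are genuinely incomplete, however. In $(\mathrm{i})\Rightarrow(\mathrm{ii})$, the phrase ``comparing matrix coefficients \ldots\ with $f$ running through an approximate identity'' is not an argument: what you have bounded is a Schur-type multiplier relative to the masa $\ELL^\infty(\G)\subset\linbeg(\ELL^2(\G))$, whereas the paper's $\|\hat\varphi\|_S$ is the \emph{discrete} Schur norm on $\linbeg(\ell^2(\G))$, and you never connect the two. (Jolissaint's actual route avoids this by going directly from the dilation $W^\adjoint(\lambda(g)\otimes\unit)V=\varphi(g)\lambda(g)$ to a pointwise factorisation of type $(\mathrm{iii})$.) In $(\mathrm{ii})\Rightarrow(\mathrm{iii})$, the discrete Grothendieck--Haagerup factorisation produces $P_0,Q_0$ as bounded \emph{set}-maps on $\G$, with the identity $\varphi(y^{-1}x)=\ip{P_0(x)}{Q_0(y)}$ holding for \emph{all} $(x,y)$ but with no measurability of $P_0,Q_0$ guaranteed; your averaging step then presupposes that $v\mapsto P_0(vx)$ lies in $\ELL^2(U,\Hil_0)$, which is not justified as written. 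Both gaps are repairable and are dealt with in the cited references, but they are real.
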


Let $\G$ be a locally compact group and $\K$ a compact subgroup. A function $f$ on $\G$ is called \emph{$\K$-bi-invariant} if
\begin{equation*}
	f(k g k')=f(g)\qquad(g\in\G,\,k,k'\in\K).
\end{equation*}
Let $\cont_\cpt(\G)^\natural$ denote the set of compactly supported continuous functions on $\G$ which are $\K$-bi-invariant (throughout, we let the superscripts $\natural$ on a set of functions on $\G$ denote the subset consisting of the $\K$-bi-invariant functions---in general, there should be no confusion over which $\K$ is meant). The pair $(\G,\K)$ is a \emph{Gelfand pair} if $\cont_\cpt(\G)^\natural$ is commutative with respect to convolution. This implies that $\ELL^1(\G)^\natural$ is commutative with respect to convolution and that $\G$ is unimodular (cf.~\cite{CEFRT:AnalyseHarmonique}).

A \emph{spherical function} on a Gelfand pair $(\G,\K)$ is a function $\varphi\in\cont(\G)^\natural$ such that
\begin{equation*}
	f\mapsto\dual{f}{\varphi}\qquad(f\in\cont_\cpt(\G)^\natural)
\end{equation*}
is a non-zero character, where
\begin{equation*}
	\dual{f}{\varphi}=\int_\G f(g)\varphi(g)\dd\mu_\G(g)\qquad(f\in\cont_\cpt(\G)^\natural,\,\varphi\in\cont(\G)^\natural)
\end{equation*}
and $\mu_\G$ is a left and right invariant Haar measure on $\G$.

In~\cite{DCH:MultipliersOfTheFourierAlgebrasOfSomeSimpleLieGroupsAndTheirDiscreteSubgroups} it was proved that the reduced $\cstar$-algebra of any closed discrete subgroup of the \emph{generalized Lorentz groups} $\GG$ (for $n\geq2$) have the completely bounded approximation property (CBAP). The proof relied on finding good upper bounds on the $\MoA(\G)$-norm of the spherical functions on $\GG$. The main result of section~\ref{sph} (Theorem~\ref{maintheorem1}) is an exact computation of the $\MoA(\G)$-norm of the spherical functions on $\GG$:
\begin{theorem}
	\label{T0.3}
	Let $(\G,\K)$ be the Gelfand pair with $\G=\GG$ and $\K=\KK$ for $n\geq2$ and put $m=n-1$. Let $(\varphi_s)_{s\in\C}$ denote the spherical functions on $(\G,\K)$ indexed in the same way as in~\cite[Example~4.2.4]{GV:HarmonicAnalysisOfSphericalFunctionsOnRealReductiveGroups}. Then the completely bounded Fourier multiplier norm is given by
	\begin{equation*}
		\|\varphi_s\|_{\MoA(\G)}=\Gammaloadd
	\end{equation*}
	for $|\Re(s)|<\frac{\h}{2}$, where $\Gamma$ is the \emph{Gamma function}, and
	\begin{equation*}
		\|\varphi_s\|_{\MoA(G)}=1
	\end{equation*}
	for $s=\pm\frac{m}{2}$.
\end{theorem}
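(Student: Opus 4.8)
The plan is to invoke Proposition~\ref{Gilbert0}(iii): $\|\varphi_s\|_{\MoA(\G)}$ is the infimum of $\|P\|_\infty\|Q\|_\infty$ over all bounded continuous $P,Q\colon\G\to\Hil$ with $\varphi_s(y^{-1}x)=\ip{P(x)}{Q(y)}$, and since $\varphi_s$ is $\K$-bi-invariant one may take $P,Q$ to factor through $\G/\K=\HH^n$ (real hyperbolic $n$-space); so the task is to compute the Schur norm of the kernel $(\xi,\eta)\mapsto\varphi_s(d(\xi,\eta))$, $d$ the hyperbolic distance. Write $\rho=\frac{\h}{2}$. The endpoint is disposed of at once: in the Poisson representation $\varphi_s(g)=\int_{S^{n-1}}\mathcal P(g\K,b)^{\rho+s}\,db$ — $\mathcal P$ the Poisson kernel on $\HH^n$ normalized so that $\int_{S^{n-1}}\mathcal P(\,\cdot\,,b)^{2\rho}\,db=1$, $S^{n-1}=\partial\HH^n$ — the parameter $s=\pm\rho$ gives exponent $2\rho$, hence $\varphi_{\pm\h/2}\equiv\unit$, and this constant function (the coefficient of the trivial representation) has $\MoA(\G)$-norm $1$.

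For $|\Re(s)|<\frac{\h}{2}$ the same representation gives the rank-one factorization $\varphi_s(y^{-1}x)=\int_{S^{n-1}}\mathcal P(\xi,b)^{\rho+s}\,\mathcal P(\eta,b)^{\rho-s}\,db$ with $\xi=x\K$, $\eta=y\K$; but the two factors are unbounded in the plain $L^2(S^{n-1})$, so to get the sharp constant I would realize the pair in the complementary-series Hilbert space $\Hil_{\Re s}$, i.e.\ $L^2(S^{n-1})$ with inner product $\ip{f}{g}_{\Re s}=\ip{\mathcal A(\Re s)f}{g}$ for the analytically continued Knapp--Stein intertwining operator $\mathcal A(\Re s)$ (positive for $|\Re s|<\rho$), taking $P(x)=\pi_s(x)\xi_0$ (orbit of the $\K$-fixed vector $\xi_0$ under the spherical principal series $\pi_s$) and $Q(y)=\lambda(\Re s)^{-1}\pi_s(y^{-1})^{\adjoint}\xi_0$ (adjoint in $\Hil_{\Re s}$, $\lambda(\Re s)$ the eigenvalue of $\mathcal A(\Re s)$ on the constants), so that $\ip{P(x)}{Q(y)}_{\Re s}=\varphi_s(y^{-1}x)$. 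A concentration estimate shows the weight $\mathcal A(\Re s)$ exactly cancels the boundary blow-up of $\mathcal P(\xi,\cdot)^{\rho+s}$, so $\|P\|_\infty,\|Q\|_\infty<\infty$; since $\pi_s$ is unitary on $\Hil_{\Re s}$ over $\K$, these suprema depend only on the Cartan parameter and are attained in the limit along $\A$, where the integrals over $S^{n-1}$ collapse to Beta integrals $\int_0^1u^{p-1}(1-u)^{q-1}\,du$. Combining everything with the duplication and reflection formulae yields $\|\varphi_s\|_{\MoA(\G)}\le\Gammaloadd$.

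For the matching lower bound I would restrict to the abelian subgroup $\A\cong\R$: restriction of a completely bounded Fourier multiplier to a closed subgroup does not increase the norm, and $\MoA(\R)=B(\R)$ isometrically (amenability), the $B(\R)$-norm being the total variation of the representing measure. Hence $\|\varphi_s\|_{\MoA(\G)}\ge\|\varphi_s|_{\A}\|_{B(\R)}=\|\mu_s\|$, where $\widehat{\mu_s}$ is the function $t\mapsto\varphi_s(\exp tH)$. Feeding the integral formula for $\varphi_s|_{\A}$ into $\int_\R(\cosh x)^{-z}e^{-i\xi x}\,dx=\frac{2^{z-1}}{\Gamma(z)}\Gamma(\frac{z+i\xi}{2})\Gamma(\frac{z-i\xi}{2})$ (valid for $\Re z>0$) shows $\mu_s$ has a density $p_s$ proportional to $\Gamma(\frac{\rho+s+i\xi}{2})\Gamma(\frac{\rho+s-i\xi}{2})\Gamma(\frac{\rho-s+i\xi}{2})\Gamma(\frac{\rho-s-i\xi}{2})$, and $\int_\R|p_s(\xi)|\,d\xi$ evaluates in closed form, by a Barnes-type beta integral, to exactly $\Gammaloadd$; together with the upper bound this gives the theorem (and, incidentally, that the restriction to $\A$ is norm-preserving for these radial functions).

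The main obstacle, I expect, is the upper bound: producing the Hilbert space and the pair $(P,Q)$ that actually attain the constant, since the naive boundary realization is unbounded and one must genuinely use the complementary-series/intertwining-operator structure to balance $P$ against $Q$. Checking that the suprema along $\A$ are the true suprema, and carrying out the Beta- and Barnes-integral evaluations so that the two Gamma expressions coincide and collapse to the stated closed form, is where the substance lies; the bookkeeping with the normalizing constants is then routine.
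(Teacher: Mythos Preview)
Your plan is genuinely different from the paper's, and the difference is instructive.

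The paper never works with the complementary series on $\G$ or with Knapp--Stein operators. Instead it passes to the solvable subgroup $\LN\LA$ and, by a chain of explicit unitaries (stereographic projection $S^{m}\to\R^{m}$, Fourier--Plancherel on $\R^{m}$, and a multiplicative twist by $\|x\|^{-it}$), conjugates the principal series $\rho_{it}|_{\LN\LA}$ into a single \emph{$s$-independent} unitary representation $\tilde\pi$ of $\LN\LA$ on $L^2(\R^m)$. All the $s$-dependence is pushed into the vector: $\varphi_s|_{\LN\LA}=\ip{\tilde\pi(\cdot)\tilde f_s}{\tilde f_{-\bar s}}$ with $\tilde f_s(x)=c_m\,\Gamma(\tfrac{m}{2}+s)^{-1}K_s(\|x\|)$, a modified Bessel function. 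The upper bound is then just $\|\tilde f_s\|_2\|\tilde f_{-\bar s}\|_2$, evaluated by the known integral $\int_0^\infty K_\nu K_\mu\, r^{m-1}\,dr$. For the lower bound the paper restricts not to $\LA$ but to $\LN\cong\R^m$: there $\varphi_s|_\LN$ is the Fourier transform of $h_s(x)=c_m' K_s(\|x\|)^2$, and $\|h_s\|_1$ is the \emph{same} Bessel integral, so upper and lower bound coincide automatically. Finally $\|\varphi_s\|_{\MoA(\G)}=\|\varphi_s|_{\LN\LA}\|_{\FSA(\LN\LA)}$ by amenability of $\LN\LA$ and \cite[Proposition~1.6(b)]{CH:CompletelyBoundedMultipliersOfTheFourierAlgebraOfASimpleLieGroupOfRealRankOne}.

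Compared to this, your route has two soft spots. For the upper bound you take $P(x)=\pi_s(x)\xi_0$ in $\Hil_{\Re s}$; but for genuinely complex $s$ the representation $\pi_s$ is unitary on neither $L^2(S^{n-1})$ nor $\Hil_{\Re s}$, so $\|P(a_r)\|_{\Hil_{\Re s}}$ is a nonconstant function of $r$ whose finiteness and exact supremum you still have to establish---this is precisely the difficulty the paper's $\LN\LA$ trick dissolves, because there the representation is unitary and the vector norm is constant. For the lower bound you restrict to $\LA$ and invoke a Barnes integral; this may well give the right constant, but it is a separate (harder) computation, whereas the paper's restriction to $\LN$ reduces both bounds to one and the same integral. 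In short: your scheme is reasonable and may be completable, but the paper's device of making the representation $s$-independent on $\LN\LA$ is exactly what converts the ``main obstacle'' you anticipate into a routine Bessel-function evaluation.
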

The spherical functions considered in Theorem~\ref{T0.3} constitute all spherical functions on $\GG$ which are completely bounded Fourier multipliers---this is contained in Theorem~\ref{T0.4}~{\it (i)}.

The main result of~\cite{DCH:MultipliersOfTheFourierAlgebrasOfSomeSimpleLieGroupsAndTheirDiscreteSubgroups} was generalized in~\cite{CH:CompletelyBoundedMultipliersOfTheFourierAlgebraOfASimpleLieGroupOfRealRankOne} to all connected, real rank one, simple Lie groups with finite center. These Lie groups are locally isomorphic to $\GG$, $SU(1,n)$, $Sp(1,n)$ (for $n\geq2$) or to the exceptional group $F_{4(-20)}$ (cf.~\cite{Hel:DifferentialGeometry;LieGroups;AndSymmetricSpaces}). The exact value of the ${\MoA(\G)}$-norm of the spherical functions on $SU(1,n)$, $Sp(1,n)$ or $F_{4(-20)}$ are not known. In section~\ref{mt1} we prove (cf.~Theorem~\ref{boundary} and~\ref{uniformlyunbd}):
\begin{theorem}
	\label{T0.4}
	Let $\G$ be $\GG$, $SU(1,n)$, $Sp(1,n)$ (for $n\geq2$) or $F_{4(-20)}$ and let $\K$ be the corresponding maximal compact subgroup coming from the Iwasawa decomposition as in~\cite{GV:HarmonicAnalysisOfSphericalFunctionsOnRealReductiveGroups}. Let $(\varphi_s)_{s\in\C}$ be the spherical functions on $(\G,\K)$ indexed as in~\cite[Example~4.2.4]{GV:HarmonicAnalysisOfSphericalFunctionsOnRealReductiveGroups}, and put
	\begin{equation*}
		m=p+2q,
	\end{equation*}
	where $p,q$ are computed according to Table~\ref{pandq}. Then
	\begin{itemize}
		\item [(i)]$\varphi_s\in\MoA(\G)$ if and only if $|\Re(s)|<\frac{m}{2}$ or $s=\pm\frac{m}{2}$.
		\item [(ii)]$\|\varphi_s\|_{\MoA(\G)}$ is not uniformly bounded on the strip $|\Re(s)|<\frac{m}{2}$.
	\end{itemize}
\end{theorem}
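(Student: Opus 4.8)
The plan is to treat all four families at once, combining two soft facts about $\MoA$ with rank-one structure theory. First, $\|\cdot\|_{\MoA}$ does not increase under restriction to a closed subgroup (restrict the maps $P,Q$ of Proposition~\ref{Gilbert0}(iii)), and $\MoA(\R)=B(\R)$ isometrically since $\R$ is amenable; also $\|\varphi\|_\infty\le\|\varphi\|_{\MoA(\G)}$, and the unit ball of $\MoA(\G)$ is weak-$*$ compact with weak-$*$ continuous point evaluations, so $\|\cdot\|_{\MoA(\G)}$ is lower semicontinuous for pointwise convergence. Write $\rho=\frac{\h}{2}$, identify $\IA\cong(\R,+)$ by a one-parameter subgroup $t\mapsto a_t$ normalised to the expansion below, and recall from~\cite[Ch.~4]{GV:HarmonicAnalysisOfSphericalFunctionsOnRealReductiveGroups} that $\varphi_s=\varphi_{-s}$, that $\varphi_s(a_{-t})=\varphi_s(a_t)$, and the Harish--Chandra expansion
\begin{equation*}
	\varphi_s(a_t)=\ccc(s)e^{(s-\rho)t}\sum_{j\ge0}\Gamma_j(s)e^{-2jt}+\ccc(-s)e^{(-s-\rho)t}\sum_{j\ge0}\Gamma_j(-s)e^{-2jt}\qquad(t>0),
\end{equation*}
with $\Gamma_0\equiv1$ and $\ccc$ the Harish--Chandra $\ccc$-function, holomorphic and zero-free on $\{\Re(s)>0\}$.

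The ``only if'' half of (i) comes from restricting to $\IA$: if $\varphi_s\in\MoA(\G)$ then the even function $\varphi_s|_\IA(t)=\varphi_s(a_{|t|})$ lies in $B(\R)$. Using $\varphi_s=\varphi_{-s}$ we may take $\Re(s)\ge0$; from the expansion, $\varphi_s|_\IA$ then equals $\ccc(s)e^{(\Re(s)-\rho)|t|}$ times an oscillation plus exponentially smaller terms. For $\Re(s)>\rho$ this is unbounded, so $\varphi_s\notin\MoA(\G)$ already because $\|\varphi_s\|_\infty\le\|\varphi_s\|_{\MoA(\G)}$; for $\Re(s)=\rho$ with $\Im(s)\ne0$ the leading term is $\ccc(s)e^{i\Im(s)|t|}$, whose distributional Fourier transform is not a finite measure (it has a principal-value part supported near $\pm\Im(s)$), so $\varphi_s|_\IA\notin B(\R)$; since $\ccc(s)\ne0$ in both boundary cases, $\varphi_s\notin\MoA(\G)$. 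For the ``if'' half, the open strip $|\Re(s)|<\rho$ is exactly where the spherical-function estimates of~\cite{DCH:MultipliersOfTheFourierAlgebrasOfSomeSimpleLieGroupsAndTheirDiscreteSubgroups} (for $\GG$) and~\cite{CH:CompletelyBoundedMultipliersOfTheFourierAlgebraOfASimpleLieGroupOfRealRankOne} (all four) give $\|\varphi_s\|_{\MoA(\G)}<\infty$ (for $\GG$, with the exact value from Theorem~\ref{T0.3}). At the endpoints $s=\pm\rho$: for $\GG$ and $SU(1,n)$ the spherical complementary series fills $(0,\rho)$, so $\varphi_{\pm\rho}=\lim_{\sigma\uparrow\rho}\varphi_\sigma$ is a pointwise limit of positive definite functions of value $1$ at $e$, hence positive definite with $\varphi_{\pm\rho}(e)=1$, hence (taking $P=Q$ the orbit map of the cyclic GNS vector in Proposition~\ref{Gilbert0}(iii)) $\varphi_{\pm\rho}\in\MoA(\G)$ with norm $1$; for $Sp(1,n)$ ($n\ge2$) and $F_{4(-20)}$ the complementary series stops short of $\rho$, and one argues instead that the realisations $\varphi_\sigma=\ip{P_\sigma(\cdot)}{Q_\sigma(\cdot)}$ of~\cite{CH:CompletelyBoundedMultipliersOfTheFourierAlgebraOfASimpleLieGroupOfRealRankOne} keep $\|P_\sigma\|_\infty\|Q_\sigma\|_\infty$ bounded as $\sigma\uparrow\rho$ along $\R$, whence $\varphi_{\pm\rho}=\lim\varphi_\sigma\in\MoA(\G)$ by lower semicontinuity.

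For (ii), again $\|\varphi_s\|_{\MoA(\G)}\ge\|\varphi_s|_\IA\|_{B(\R)}$. Fix $\beta>0$ and set $s=\sigma+i\beta$ with $\sigma\uparrow\rho$; near $s=\rho+i\beta$ the $\Gamma_j(s)$ remain bounded (their poles lie on the real axis), so term-by-term Fourier transformation expresses $\widehat{\varphi_s|_\IA}$ as an $L^1$-convergent series of Cauchy kernels $\xi\mapsto 2(\rho+2j\mp s)\bigl((\rho+2j\mp s)^2+\xi^2\bigr)^{-1}$. Every term except the one with denominator $\bigl((\rho-\sigma)-i\beta\bigr)^2+\xi^2$ has denominator bounded away from $0$ and hence uniformly bounded $L^1(\R)$-norm, while the distinguished denominator comes within $O\bigl((\rho-\sigma)\beta\bigr)$ of vanishing at $\xi=\pm\beta$, giving that term $L^1(\R)$-norm $\gtrsim_\beta\log\frac{1}{\rho-\sigma}$. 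Since $\ccc(\rho+i\beta)\ne0$ we obtain $\|\varphi_s\|_{\MoA(\G)}\ge\|\varphi_s|_\IA\|_{B(\R)}\gtrsim_\beta\log\frac{1}{\rho-\sigma}\to\infty$ as $\sigma\uparrow\rho$, which is the asserted failure of a uniform bound on the strip. (For $\GG$ one may bypass this and read (ii) off Theorem~\ref{T0.3}: with $\sigma$ fixed and $t\to\infty$ the displayed quotient of Gamma factors tends to $\Gamma(\rho+\sigma)\Gamma(\rho-\sigma)\Gamma(\rho)^{-2}$, which blows up as $\sigma\uparrow\rho$.)

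The one point that needs real work rather than soft manipulation is the ``if'' direction of (i) at the endpoints for the two property-(T) groups $Sp(1,n)$ and $F_{4(-20)}$: there the limit-of-complementary-series shortcut is unavailable, and one must check that the estimates of~\cite{CH:CompletelyBoundedMultipliersOfTheFourierAlgebraOfASimpleLieGroupOfRealRankOne} extend up to the closed strip $|\Re(s)|\le\rho$. The rest --- the ``only if'' direction and part (ii) --- is, after restriction to $\IA$, elementary harmonic analysis on $\R$ plus the Harish--Chandra expansion, with the only care being the uniform control of that expansion's error terms near $s=\rho+i\beta$.
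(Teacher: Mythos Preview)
Your overall strategy---restrict to $A\cong\R$ and use the Harish--Chandra asymptotics---matches the paper's, but you have overlooked one elementary fact and thereby created a phantom difficulty, and for part~(ii) you have taken a much harder route than necessary.

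\textbf{The endpoints are trivial.} For every one of the four families one has $\varphi_{\pm\h/2}=\unit$, the constant function~$1$: indeed from~\eqref{sphfcthyper}, at $s=\h/2$ the second parameter of the hypergeometric function vanishes, and $F(a,0;c;z)\equiv1$. Hence $\|\varphi_{\pm\h/2}\|_{\MoA(\G)}=1$ immediately, with no case distinction between the property-(T) groups and the others. The ``one point that needs real work'' you flag at the end---extending the Cowling--Haagerup estimates to the closed strip for $Sp(1,n)$ and $F_{4(-20)}$---is simply not needed; nor is the complementary-series limit argument for $\GG$ and $SU(1,n)$.

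\textbf{Part~(ii) follows softly from part~(i).} The paper does not compute anything for~(ii). Once~(i) is established, suppose for contradiction that $\|\varphi_s\|_{\MoA(\G)}\le c$ on the open strip. Since $s\mapsto\varphi_s(g)$ is continuous and $\|\varphi_s\|_\infty\le1$ on $\bSbar$, dominated convergence gives $\varphi_{s_n}\to\varphi_s$ in $\sigma(\ELL^\infty,\ELL^1)$ whenever $s_n\to s\in\bSbar$; but the unit ball of $\MoA(\G)$ is $\sigma(\ELL^\infty,\ELL^1)$-closed, so the bound $\le c$ persists on the closed strip, contradicting~(i). You already invoke lower semicontinuity for the endpoints, so you had the tool in hand. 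Your direct approach---expanding $\widehat{\varphi_s|_A}$ as an $L^1$-series of complex Cauchy kernels and isolating a logarithmically divergent term---is plausible but carries a real burden you have not discharged: the $\Gamma_j(\pm s)$ are only polynomially bounded in $j$, while each Cauchy kernel has $L^1$-norm bounded below (roughly by $2\pi$), so the term-by-term series does \emph{not} obviously converge in $L^1$, and one must first subtract a smooth compactly-supported piece near $t=0$ or otherwise regroup the expansion before summing. The paper's soft argument sidesteps all of this.

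\textbf{The boundary ``only if''.} Your idea is correct, but the paper packages it more cleanly via a Wiener-type lemma: if $\psi$ is even, continuous, and $\lim_{r\to\infty}\psi(r)e^{irx_0}=c\ne0$ for some $x_0\ne0$, then $\psi\notin B(\R)$. This uses only the asymptotic $\varphi_s(a_r)\sim\ccc(s)e^{(s-\h/2)r}$ and the fact that $\ccc(\tfrac{\h}{2}+i\beta)\ne0$, and avoids having to argue that the remainder (an infinite Harish--Chandra series) lies in $B(\R)$, which your sketch asserts but does not justify.
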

\begin{table}
	\begin{minipage}[t b c]{\textwidth}
		\caption[Computation of $p$ and $q$.]{Computation of $p$ and $q$.}
		\label{pandq}
		\begin{center}
			\begin{tabular}{|c|c|c|c|}
				\hline
				$\F$ & $\G$ & $p=(n-1)\dim_\R(\F)$ & $q=\dim_\R(\F)-1$ \\
				\hline
				$\R$ & $\GG$ & $n-1$ & $0$ \\
				\hline
				$\C$ & $SU(1,n)$ & $2n-2$ & $1$ \\
				\hline
				$\HH$ & $Sp(1,n)$ & $4n-4$ & $3$ \\
				\hline
				$\OO$ & $F_{4(-20)}$ & $8$ & $7$ \\
				\hline
			\end{tabular}
		\end{center}
	\end{minipage}
\end{table}
The ``if'' part of Theorem~\ref{T0.4}~{\it (i)} was proved in~\cite{DCH:MultipliersOfTheFourierAlgebrasOfSomeSimpleLieGroupsAndTheirDiscreteSubgroups} for $\GG$ and in~\cite{CH:CompletelyBoundedMultipliersOfTheFourierAlgebraOfASimpleLieGroupOfRealRankOne} for $SU(1,n)$, $Sp(1,n)$ and $F_{4(-20)}$. Not that, according to~\cite[Proposition~1.6~(b)]{CH:CompletelyBoundedMultipliersOfTheFourierAlgebraOfASimpleLieGroupOfRealRankOne}, a spherical function, on one of the Gelfand pairs considered in Theorem~\ref{T0.4}, is a Fourier multiplier if and only if it is a completely bounded Fourier multiplier (and the two norms coincide). Hence, we could choose to formulate Theorem~\ref{T0.4} (and other theorems) in terms of Fourier multipliers instead of completely bounded Fourier multipliers. We will not do that, since completely bounded Fourier multipliers seem to be the more canonical concept (and the one we consider in section~\ref{coeff}).

Results corresponding to Theorem~\ref{T0.3} and Theorem~\ref{T0.4} are obtained in~\cite[Theorem~5.8]{HSS:SchurMultipliersAndSphericalFunctionsOnHomogeneousTrees} for the Gelfand pair $(\PGLQ,\PSLZ)$, where $\padicnum$ is the field of p-adic numbers for a prime number $q$ and $\padicint$ is the subring of p-adic integers.

Let $\G$ be one of the groups $\GG$, $SU(1,n)$, $Sp(1,n)$ (for $n\geq2$) or $F_{4(-20)}$, then $\G$ has an \emph{Iwasawa decomposition} $\G=\LK\LA\LN$ (or $\Lg=\Lk+\La+\Ln$ at the level of Lie algebras), where $\LK$ is a maximal compact subgroup, $\LA$ is an abelian subgroup and $\LN$ is a nilpotent subgroup. Since $\G$ has real rank one, $\LA$ is one dimensional and is customarily written
\begin{equation*}
	\LA=\setw{a_r}{r\in\R},
\end{equation*}
where
\begin{equation}
	\label{exprH}
	a_r=\exp(rH)
\end{equation}
for a certain $H\in\La^+$.
\begin{remark}
	There is a unique positive simple root in $\La^*$, which will be denoted $\alpha$. The reader familiar with the Iwasawa decomposition will observe that $p$ and $q$ from Table~\ref{pandq} are given by $p=\dim(\Lg_\alpha)$ and $q=\dim(\Lg_{2\alpha})$, where $\Ln=\Lg_\alpha+\Lg_{2\alpha}$ is the sum of the positive root spaces. The choice of $H\in\La^+$ is made such that $\alpha(H)=1$ (cf.~\cite[Example~4.2.4]{GV:HarmonicAnalysisOfSphericalFunctionsOnRealReductiveGroups}).
\end{remark}

For $\GG$, $\LN$ is abelian while for the remaining groups $\LN$ is step-two nilpotent. It is well known that $(\G,\K)$ is a Gelfand pair for all these groups (cf.~\cite[Corollary~1.5.6]{GV:HarmonicAnalysisOfSphericalFunctionsOnRealReductiveGroups})---it is the canonical Gelfand pair on $\G$, so we will often refer to the spherical functions on the Gelfand pair $(\G,\K)$ as the spherical functions on $\G$. The polar decomposition of $\G$ (cf.~\cite[Lemma~2.2.3]{GV:HarmonicAnalysisOfSphericalFunctionsOnRealReductiveGroups}) is given by $\G=\LK\cpA\LK$, where $\pA=\setw{ a_r}{r>0 }$ and $\cpA=\setw{a_r}{r\geq0}$. Since the spherical functions on $(\G,\K)$ are $\K$-bi-invariant they can be thought of as functions on $\cpA$ (or $\LA$, using that $a_r^{-1}=a_{-r}$ and that the spherical functions are invariant under taking inverse).

Let $(\G,\K)$ be one of the above Gelfand pairs, and put
\begin{equation}
	\label{m}
	m=p+2q
\end{equation}
and
\begin{equation}
	\label{mo}
	m_0=p+2,
\end{equation}
where $q$ and $p$ are given in Table~\ref{pandq}. According to~\cite[(4.2.23)]{GV:HarmonicAnalysisOfSphericalFunctionsOnRealReductiveGroups} the spherical function $\varphi_s$ ($s\in\C$) on $(\G,\K)$ is given by
\begin{equation}
	\label{sphfcthyper}
	\varphi_s(a_r)=F\Big(\frac{m}{4}+\frac{s}{2},\frac{m}{4}-\frac{s}{2};\frac{m+m_0}{4};-\sinh(r)^2\Big)\qquad(r\in\R),
\end{equation}
where $F(a,b;c;z)$ is the \emph{Hypergeometric function} of the complex variable $z$ with parameters $a,b,c\in\C$ as defined in \cite[\S~2.1]{EMOT:HigherTranscendentalFunctions.Vol.I}.\footnote{This Hypergeometric function is sometimes called ${}_2F_1(a,b;c;z)$ instead of just $F(a,b;c;z)$, but since we do not use any other types of generalized Hypergeometric functions we choose to omit the extra subscripts. It is defined through a power series that converges absolutely for $|z|<1$ (and also for $|z|=1$ if $\Re(a+b)<\Re(c)$). If $|z|$ exceeds $1$ in our formulas we are implicitly using an analytic continuation.} Let $\bS$ be the strip in the complex plane given by $\bS=\strip$. We list here some well known results about the spherical functions on $\G$ (general references are~\cite{GV:HarmonicAnalysisOfSphericalFunctionsOnRealReductiveGroups}, \cite{CEFRT:AnalyseHarmonique} and~\cite{Hel:GroupsAndGeometricAnalysis}):
\begin{itemize}
	\item Every spherical function on $(\G,\K)$ equals $\varphi_s$ for some $s\in\C$.
	\item $\varphi_s=\varphi_{s'}$ if and only if $s=\pm s'$.
	\item $\varphi_s=\unit$ (the constant function $1$) for $s=\pm\tfrac{\h}{2}$.
	\item $\varphi_s$ is bounded if and only if $s\in\bSbar$, and in this case $\|\varphi\|_\infty=1$.
	\item For every $g\in\G$ the map $s\mapsto\varphi_s(g)$ is analytic.
	\item $\varphi_s$ (considered as a function on $\G/\K$) is an eigenfunction of the Laplace--Beltrami operator with eigenvalue $s^2-(\tfrac{\h}{2})^2$.
\end{itemize}

By a \emph{representation} $(\pi,\Hil)$ of a locally compact group $\G$ on a Hilbert space $\Hil$ we mean a homomorphism of $\G$ into the invertible elements of $\linbeg(\G)$. A representation $(\pi,\Hil)$ of $\G$ is said to be \emph{uniformly bounded} if
\begin{equation*}
	\sup_{g\in\G}\|\pi(g)\|<\infty
\end{equation*}
and one usually writes $\|\pi\|$ for $\sup_{g\in\G}\|\pi(g)\|$. If $g\mapsto\pi(g)$ is continuous with respect to the strong operator topology on $\linbeg(\G)$ then we say that $(\pi,\Hil)$ is \emph{strongly continuous}. Let $(\pi,\Hil)$ be a strongly continuous, uniformly bounded representation of $\G$ then, according to~\cite[Theorem~2.2]{DCH:MultipliersOfTheFourierAlgebrasOfSomeSimpleLieGroupsAndTheirDiscreteSubgroups}, any \emph{coefficient of $(\pi,\Hil)$} is a continuous Herz--Schur multiplier, i.e.,
\begin{equation*}
	g\stackrel{\varphi}{\mapsto}\ip{\pi(g)\xi}{\eta}\qquad(g\in\G)
\end{equation*}
is a continuous Herz--Schur multiplier with
\begin{equation*}
	\|\varphi\|_{\MoA(\G)}\leq\|\pi\|^2\|\xi\|\|\eta\|
\end{equation*}
for any $\xi,\eta\in\Hil$ (note that this result also follows as a corollary to Proposition~\ref{Gilbert0}).

U.~Haagerup has shown that on the non-abelian free groups there are Herz--Schur multipliers which can not be realized as coefficients of uniformly bounded representations. The proof by Haagerup has remained unpublished, but Pisier has later given a different proof, cf.~\cite{Pis:AreUnitarizableGroupsAmenable?}. In section~\ref{coeff} we use~\cite[Theorem~5.8]{HSS:SchurMultipliersAndSphericalFunctionsOnHomogeneousTrees} and Theorem~\ref{T0.4}~{\it (ii)} together with a modified version of Haagerup's proof to show (cf.~Theorem~\ref{HaageruponGelfand} and Remark~\ref{withoutsoctsnow}):
\begin{theorem}
	\label{T0.5}
	Let $\G$ be a group of the form $\GG$, $SU(1,n)$, $Sp(1,n)$ (with $n\geq2$), $F_{4(-20)}$ or $\PGLQ$ (with $q$ a prime number). There is a completely bounded Fourier multiplier of $\G$ which is not the coefficient of a uniformly bounded representation of $\G$.
\end{theorem}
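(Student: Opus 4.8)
The plan is to combine a qualitative dichotomy for coefficients of uniformly bounded representations with the quantitative failure of uniform boundedness already established in Theorem~\ref{T0.4}~{\it (ii)} (and its $p$-adic analogue in~\cite[Theorem~5.8]{HSS:SchurMultipliersAndSphericalFunctionsOnHomogeneousTrees}). First I would observe that it suffices to produce \emph{one} completely bounded Fourier multiplier on $\G$ that is not a coefficient of any uniformly bounded representation; by Proposition~\ref{Gilbert0} and the discussion following it, every such coefficient is automatically a completely bounded Fourier multiplier, so the content is the reverse non-inclusion. The natural source of test functions is the family of spherical functions $\varphi_s$ on the canonical Gelfand pair $(\G,\K)$: by Theorem~\ref{T0.4}~{\it (i)} these lie in $\MoA(\G)$ precisely on the closed strip $|\Re(s)|\le\frac{m}{2}$, and by part~{\it (ii)} their norms $\|\varphi_s\|_{\MoA(\G)}$ blow up as $\Re(s)\to\pm\frac{m}{2}$ inside the open strip.

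The key step is Haagerup's argument, adapted to the present setting. Suppose, for contradiction, that \emph{every} completely bounded Fourier multiplier of $\G$ were a coefficient of a uniformly bounded representation. I would then run the following averaging/compactness scheme. Fix a sequence $s_k$ with $|\Re(s_k)|\nearrow\frac{m}{2}$; normalise $\varphi_{s_k}$ and, using the assumed representability, write $\varphi_{s_k}(g)=\ip{\pi_k(g)\xi_k}{\eta_k}$ for strongly continuous uniformly bounded representations $(\pi_k,\Hil_k)$. The point of Haagerup's trick is to pass from these to a \emph{single} uniformly bounded representation: one takes a free product, an ultraproduct, or a direct integral over the parameter, and uses the Gelfand-pair structure (the $\K$-bi-invariance and the sphericality, i.e.\ that $\varphi_s$ is a character on $\cont_\cpt(\G)^\natural$) to average over $\K$ and identify a limit coefficient. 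Concretely, because the spherical functions are the extreme points of a simplex of $\K$-bi-invariant positive-definite-like objects and are pairwise distinct for $s\ne\pm s'$, a uniformly bounded representation whose coefficients include all the $\varphi_s$ on a neighbourhood of $\frac{m}{2}$ forces a uniform bound on $\|\varphi_s\|_{\MoA(\G)}$ there, contradicting Theorem~\ref{T0.4}~{\it (ii)}. For $\G=\PGLQ$ one argues identically, replacing Theorem~\ref{T0.4}~{\it (ii)} by~\cite[Theorem~5.8]{HSS:SchurMultipliersAndSphericalFunctionsOnHomogeneousTrees} and using that $\PSLZ$ is a compact open subgroup so that the same Gelfand-pair machinery applies.

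The main obstacle I expect is making the ``pass to a single representation'' step rigorous: Haagerup's original proof for free groups is combinatorial, and the transfer to rank-one Lie groups and to $\PGLQ$ requires that the relevant completely bounded Fourier multipliers one builds from the $\varphi_s$ genuinely land in $\MoA(\G)$ with controlled norm while their representability would yield a contradiction. I would handle this by choosing an explicit one-parameter deformation — for instance a suitably normalised combination $c_s(\varphi_s-\unit)$ or an integral $\int \varphi_s\,d\nu(s)$ against a measure $\nu$ supported near $\frac{m}{2}$ — whose $\MoA(\G)$-norm stays bounded (using the closed expression in Theorem~\ref{T0.3} for the real Lorentz case, and the analogous estimates behind Theorem~\ref{T0.4} in the other cases) but which, if it were a coefficient $\ip{\pi(g)\xi}{\eta}$ of a uniformly bounded $(\pi,\Hil)$, would force $\|\varphi_s\|_{\MoA(\G)}\le\|\pi\|^2\|\xi\|\,\|\eta\|$ uniformly in $s$ near $\frac{m}{2}$ via Fourier inversion in the $s$-variable on the Gelfand spectrum. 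That uniform bound contradicts Theorem~\ref{T0.4}~{\it (ii)} (resp.~\cite[Theorem~5.8]{HSS:SchurMultipliersAndSphericalFunctionsOnHomogeneousTrees}), completing the proof; the details of the spectral/Fourier-inversion step on $\cont_\cpt(\G)^\natural$, and the precise choice of $\nu$, are the part of the argument I would write out carefully, deferring the routine norm estimates to the sections establishing Theorems~\ref{T0.3} and~\ref{T0.4}.
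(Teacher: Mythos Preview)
Your overall shape---assume every $\varphi\in\MoA(\G)$ is a coefficient, then derive a uniform bound on $\|\varphi_s\|_{\MoA(\G)}$ that contradicts Theorem~\ref{T0.4}~{\it (ii)}---matches the paper. But the mechanism you propose for producing that uniform bound has a real gap. When you write each $\varphi_{s_k}$ as $\ip{\pi_k(g)\xi_k}{\eta_k}$, the hypothesis gives you \emph{no} control on $\|\pi_k\|$; it may blow up with $k$. An ultraproduct or direct integral of the $\pi_k$ then fails to be uniformly bounded, so your ``pass to a single uniformly bounded representation'' step does not go through. The alternative you sketch---build $\int\varphi_s\,d\nu(s)$ and then ``Fourier-invert in $s$'' to recover uniform bounds on the individual $\|\varphi_s\|_{\MoA(\G)}$---is not a rigorous mechanism either: there is no reason a single coefficient estimate $\|\pi\|^2\|\xi\|\,\|\eta\|$ should propagate back to uniform $\MoA$-bounds on the kernel values $\varphi_s$.

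The paper closes this gap with a Baire category argument rather than a compactness/ultraproduct one. It first shows that the set $\ST_\alpha$ of coefficients of representations with $\|\pi\|\le\alpha$ and unit vectors is a closed convex subset of $\MoA(\G)$ (the ultraproduct appears here, but with a \emph{fixed} $\alpha$, to prove closedness). The assumption ``every cb multiplier is a coefficient'' becomes $\MoA(\G)=\bigcup_{m,n}m\ST_n$, and Baire gives an $n$ for which $\ST_n$ contains a ball around $0$. This makes the predual norm $\|\cdot\|_{\Ban_0(\G)}$ equivalent to the Banach-algebra seminorm $\trip\cdot\trip_n=\sup_{\|\pi\|\le n}\|\pi(\cdot)\|$, hence submultiplicative up to a constant $c$. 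The final step is purely Gelfand-theoretic: each spherical function is a character on the commutative convolution algebra $\cont_\cpt(\G)^\natural$, and characters on a Banach algebra have norm at most $1$; together with a radialization inequality this forces $\|\varphi_s\|_{\MoA(\G)}\le c$ for all $s$, contradicting Theorem~\ref{T0.4}~{\it (ii)} (or \cite[Theorem~5.8]{HSS:SchurMultipliersAndSphericalFunctionsOnHomogeneousTrees} for $\PGLQ$). So the two ingredients you are missing are the Baire step that produces a \emph{uniform} $\alpha$, and the observation that spherical functions, being characters, are automatically contractive once the predual norm is (equivalent to) a Banach-algebra norm.
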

By permission of Haagerup, his proof for the non-abelian free groups is included in section~\ref{coeff} (cf.~Theorem~\ref{Haagerup}).

	\section{\texorpdfstring{Spherical functions on $\GG$}{Spherical functions on SO0(1,n)}}
\label{sph}
The linear transformations of $n+1$ ($n\geq2$) dimensional Minkowski space leaving invariant the quadratic form
\begin{equation*}
	-x_0^2+x_1^2+\cdots +x_n^2
\end{equation*}
consists of the real $n+1\times n+1$ matrices satisfying
\begin{equation}
	\label{linearisom}
	g^\transposed Jg=J,
\end{equation}
where $g^\transposed$ denotes the transposed of $g$ and $J$ is the $n+1\times n+1$ matrix given by
\begin{equation*}
	J=\left(
	\begin{array}{cccc}
		-1 & 0 & \cdots & 0\\
		0 & 1 & \cdots & 0\\
		\vdots & \vdots & \ddots & \vdots \\
		0 & 0 & \cdots & 1
	\end{array}
	\right).
\end{equation*}
If $g=(g_{i j})_{i,j=0}^n$ is a real $n+1\times n+1$ matrix satisfying~\eqref{linearisom}, then it is easily verified that $\det(g)=\pm1$ and $|g_{00}|=\sqrt{1+g_{10}^2+\cdots+g_{n0}^2}\geq1$. We also mention that the inverse of $g$ is given by
\begin{equation*}
	g^{-1}=Jg^\transposed J=\left(
	\begin{array}{cccc}
		g_{00} & -g_{10} & \cdots & -g_{n0}\\
		-g_{01} & g_{11} & \cdots & g_{n1}\\
		\vdots & \vdots & \ddots & \vdots \\
		-g_{0n} & g_{1n} & \cdots & g_{n n}
	\end{array}
	\right).
\end{equation*}
The \emph{generalized Lorentz group} $\GG$ consists of exactly those real $n+1\times n+1$ matrices $g=(g_{i j})_{i,j=0}^n$ satisfying~\eqref{linearisom} for which $\det(g)=1$ and $g_{00}\geq1$ (this is the same as taking the connected component containing the identity). For more details, cf.~\cite[Ch.~I~\S~1]{Tak:SurLesRepr'esentationsUnitairesDesGroupesDeLorentzG'en'eralis'es} or~\cite[\S~2]{Lip:UniformlyBoundedRepresentationsOfTheLorentzGroups}. We choose the same Iwasawa decomposition for $\GG$ as~\cite{Tak:SurLesRepr'esentationsUnitairesDesGroupesDeLorentzG'en'eralis'es} and~\cite{Lip:UniformlyBoundedRepresentationsOfTheLorentzGroups}, i.e., we let the compact group $\K$ be given by
\begin{equation*}
	K=1\times\KK,
\end{equation*}
the abelian group $\IA$ be given by
\begin{equation*}
	\IA=\setw{ a_r }{ r\in\R },\qquad a_r=
	\left(
	\begin{array}{ccccc}
		\cosh(r) & \sinh(r) & 0 & \cdots & 0\\
		\sinh(r) & \cosh(r) & 0 & \cdots & 0\\
		0 & 0 & 1 & \cdots & 0\\
		\vdots & \vdots & \vdots & \ddots & \vdots\\
		0 & 0 & 0 & \cdots & 1
	\end{array}
	\right)
\end{equation*}
and the nilpotent group $\IN$ be given by
\begin{equation*}
	\IN=\setw{ n_{\vec x} }{ \vec x\in\R^\h },\qquad n_{\vec x}=
	\left(
	\begin{array}{ccccc}
		1+\frac{\|\vec x\|^2}{2} & -\frac{\|\vec x\|^2}{2} & x_1 & \cdots & x_\h\\
		\frac{\|\vec x\|^2}{2} & 1-\frac{\|\vec x\|^2}{2} & x_1 & \cdots & x_\h\\
		x_1 & -x_1 & 1 & \cdots & 0\\
		\vdots & \vdots & \vdots & \ddots & \vdots\\
		x_\h & -x_\h & 0 & \cdots & 1
	\end{array}
	\right).
\end{equation*}
When no confusion is likely to arise, we will write $\K$ as $\KK$. It is worth noting that the maps
\begin{equation*}
	r\mapsto a_r\qquad(r\in\R)
\end{equation*}
and
\begin{equation*}
	\vec x\mapsto n_{\vec x}\qquad(\vec x\in\R^\h)
\end{equation*}
are group isomorphisms (so $\IN$ as actually abelian). To tie this up with~\eqref{exprH}, note that $a_r=\exp(rH)$, where
\begin{equation}
	\label{Hchoice}
	H=\left(
	\begin{array}{ccccc}
		0 & 1 & 0 & \cdots & 0\\
		1 & 0 & 0 & \cdots & 0\\
		0 & 0 & 0 & \cdots & 0\\
		\vdots & \vdots & \vdots & \ddots & \vdots\\
		0 & 0 & 0 & \cdots & 0
	\end{array}
	\right).
\end{equation}

In this section we will exclusively consider the Gelfand pair $(\G,\K)$, where $\G=\GG$ and $\K=\KK$, and we remind the reader that in this case $m=n-1$ and $m_0=n+1=m+2$ according to Table~\ref{pandq},~\eqref{m} and~\eqref{mo} (we will avoid using $m_0$ in this section, and instead formulate everything in terms of $m$). The spherical functions on $(\G,\K)$ have many concrete realizations. We take as starting point one such realization found in~\cite[Ch.~I~\S~3]{Tak:SurLesRepr'esentationsUnitairesDesGroupesDeLorentzG'en'eralis'es} or~\cite[\S~3.1]{GV:HarmonicAnalysisOfSphericalFunctionsOnRealReductiveGroups} (we use the same indexation of the spherical functions as the latter). Note that $\G$ leaves the forward light cone
\begin{equation}
	\label{lightcone}
	C=\setw{\vec x\in\R^{n+1} }{ -x_0^2+x_1^2+\cdots+x_n^2=0,\,x_0>0 }
\end{equation}
invariant. Moreover, the map
\begin{equation*}
	\vec\zeta\mapsto\setw{t\left(
	\begin{array}{c}
		1\\
		\vec\zeta
	\end{array}
	\right) }{ t>0 }\qquad(\vec\zeta\in S^\h)
\end{equation*}
is a bijection of $S^m$ (the unit sphere in $\R^{m+1}=\R^n$) onto the set of rays in the light cone $C$. Therefore, the action of $\G$ on $C$ induces an action of $\G$ on $S^m$ . Concretely, if $g\in\G$ and $\vec\zeta\in S^m$, then $g\vec\zeta\in S^\h$ is given by
\begin{equation*}
	(g\vec\zeta)_p=\Big(g_{00}+\sum_{q=1}^n g_{0q}\zeta_q\Big)^{-1}\Big(g_{p0}+\sum_{q=1}^n g_{p q}\zeta_q\Big)\qquad(p=1,\ldots,n).
\end{equation*}
This action can also be introduced using the Iwasawa decomposition in a way that explains the following notation which we will adopt (for further explanation the reader is referred to~\cite[p.~323]{Tak:SurLesRepr'esentationsUnitairesDesGroupesDeLorentzG'en'eralis'es})
\begin{equation*}
	r(g\vec\zeta)=\ln\Big(g_{00}+\sum_{q=1}^n g_{0q}\zeta_q\Big)\qquad(g\in\G,\,\vec\zeta\in S^\h),
\end{equation*}
which makes sense since $g$ leaves invariant the forward light cone, from which it follows that $g_{00}+\sum_{q=1}^n g_{0q}\zeta_q>0$. The series of representations considered in~\cite[Theorem~3.1]{Tak:SurLesRepr'esentationsUnitairesDesGroupesDeLorentzG'en'eralis'es} will be the starting point for the investigations in this section. For $s\in\C$ let $(\rho_s,\ELL^2(S^\h))$ be the representation given by
\begin{equation}
	\label{rhoN}
	(\rho_s(g)f)(\vec\zeta)=e^{-\left(\frac{m}{2}+s\right)r(g^{-1}\vec\zeta)}f(g^{-1}\vec\zeta)\qquad(\vec\zeta\in S^\h,\,g\in\G)
\end{equation}
for $f\in\ELL^2(S^\h)$. These are strongly continuous representations and when $s\in i\R$ they are also unitary and irreducible. We mention that it is sometimes preferable to introduce these representations on the Hilbert space $\ELL^2(\IN)$ (cf.~\cite[\S~5.3]{CCJJV:GroupsWithTheHaagerupProperty}). The change from the Hilbert space $\ELL^2(S^\h)$ to $\ELL^2(\IN)$ is implemented by the stereographic projection of $S^\h$ on $\R^{n+1}$---it will be written up explicitly later in this section (cf.~Lemma~\ref{unitaries}).

For $s\in\C$ let $\varphi_s$ be given by the coefficient
\begin{equation}
	\label{startingpoint}
	\varphi_s(g)=\ip{\rho_s(g)\unit}{\unit}_{\ELL^2(S^\h)}\qquad(g\in\G),
\end{equation}
where $\unit$ denotes the constant function $1$ on $S^\h$. It is well known (cf.~\cite{Tak:SurLesRepr'esentationsUnitairesDesGroupesDeLorentzG'en'eralis'es} or~\cite{GV:HarmonicAnalysisOfSphericalFunctionsOnRealReductiveGroups}) that this definition agrees with~\eqref{sphfcthyper}, but---for the convenience of the reader---we include a proof of the following proposition which starts from~\eqref{startingpoint} and ends with~\eqref{sphfcthyper}.
\begin{proposition}
	\label{usefulformula}
	For $s\in\C$ we have
	\begin{eqnarray*}
		\varphi_s(a_r) & = & \frac{\Gamma\big(\frac{m+1}{2}\big)}{\sqrt \pi \Gamma\big(\frac{m}{2}\big)} \int_0^\pi \frac{\sin(\theta)^{m-1}}{\big(\cosh(r)+\sinh(r)\cos(\theta)\big)^{s+\frac{m}{2}}}\dd\theta\\
		& = & e^{-\left(\frac{m}{2}+s\right)r}F\left(\frac{m}{2}+s,\frac{m}{2};m;1-e^{-2r}\right)\\
		& = & F\Big(\frac{m}{4}+\frac{s}{2},\frac{m}{4}-\frac{s}{2};\frac{m+1}{2};-\sinh(r)^2\Big)
	\end{eqnarray*}
	for $r\in\R$, where $\varphi_s$ is given by~\eqref{startingpoint}.
\end{proposition}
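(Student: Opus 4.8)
The plan is to compute the coefficient $\varphi_s(a_r)=\ip{\rho_s(a_r)\unit}{\unit}_{\ELL^2(S^\h)}$ directly from the definition~\eqref{rhoN} and then identify the resulting integral with the two hypergeometric expressions. First I would write out $\rho_s(a_r)$ acting on the constant function: since $(\rho_s(a_r)\unit)(\vec\zeta)=e^{-(\frac{m}{2}+s)r(a_r^{-1}\vec\zeta)}$, we get
\begin{equation*}
	\varphi_s(a_r)=\int_{S^\h} e^{-\left(\frac{m}{2}+s\right)r(a_{-r}\vec\zeta)}\,\dd\sigma(\vec\zeta),
\end{equation*}
where $\sigma$ is the normalized rotation-invariant measure on $S^\h$ (note $a_r^{-1}=a_{-r}$). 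From the matrix form of $a_{-r}$ and the definition of $r(g\vec\zeta)=\ln\big(g_{00}+\sum_q g_{0q}\zeta_q\big)$, only the first two rows of $a_{-r}$ matter: $r(a_{-r}\vec\zeta)=\ln\big(\cosh(r)-\sinh(r)\zeta_1\big)$. Hence
\begin{equation*}
	\varphi_s(a_r)=\int_{S^\h}\big(\cosh(r)-\sinh(r)\zeta_1\big)^{-\left(s+\frac{m}{2}\right)}\dd\sigma(\vec\zeta).
\end{equation*}
Integrating in spherical coordinates adapted to the first axis (writing $\zeta_1=\cos\theta$, with the slice measure proportional to $\sin(\theta)^{m-1}\,\dd\theta$ on $S^\h\subset\R^{m+1}$), and using that the surface area of $S^{m-1}$ over that of $S^m$ gives the constant $\frac{\Gamma(\frac{m+1}{2})}{\sqrt\pi\,\Gamma(\frac m2)}$, yields the first displayed formula after the harmless sign change $\cos\theta\mapsto-\cos\theta$.

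Next I would pass from the integral to the hypergeometric forms. For the second equality, substitute $u=\tfrac12(1-\cos\theta)$ (so $\sin(\theta)^{m-1}\dd\theta$ becomes a Beta-type measure $u^{m/2-1}(1-u)^{m/2-1}\dd u$ up to constants) and massage $\cosh(r)+\sinh(r)\cos(\theta)=e^{r}\big(1-(1-e^{-2r})u\big)$; Euler's integral representation of $F$, namely $F(a,b;c;z)=\frac{\Gamma(c)}{\Gamma(b)\Gamma(c-b)}\int_0^1 t^{b-1}(1-t)^{c-b-1}(1-zt)^{-a}\,\dd t$ with $a=s+\frac m2$, $b=\frac m2$, $c=m$, $z=1-e^{-2r}$, then produces exactly $e^{-(\frac m2+s)r}F(\frac m2+s,\frac m2;m;1-e^{-2r})$, the Gamma constants matching because $\frac{\Gamma(m)}{\Gamma(m/2)^2}=\frac{\Gamma((m+1)/2)}{\sqrt\pi\,\Gamma(m/2)}\cdot 2^{m-1}$ by the Legendre duplication formula (the $2^{m-1}$ absorbed by the change of variables). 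For the third equality, apply a standard quadratic transformation of the hypergeometric function: the identity $F(a,b;2b;z)=(1-\tfrac z2)^{-a}F\big(\tfrac a2,\tfrac{a+1}{2};b+\tfrac12;\big(\tfrac{z}{2-z}\big)^2\big)$ with $a=\frac m2+s$, $b=\frac m2$, $z=1-e^{-2r}$. One checks $1-\tfrac z2=e^{-2r}\cdot\tfrac{e^{2r}+1}{2}$... more efficiently, $\frac{z}{2-z}=\frac{1-e^{-2r}}{1+e^{-2r}}=\tanh(r)$ and $(1-\tfrac z2)=\tfrac12(1+e^{-2r})=e^{-r}\cosh(r)$, so $(1-\tfrac z2)^{-a}=(e^{-r}\cosh r)^{-(\frac m2+s)}$, and combined with the prefactor $e^{-(\frac m2+s)r}$ this gives $(\cosh r)^{-(\frac m2+s)}$; meanwhile $-\tanh(r)^2\cdot\cosh(r)^2=-\sinh(r)^2$ rescales the argument. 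After collecting the half-integer parameters ($\frac a2=\frac m4+\frac s2$, $\frac{a+1}{2}=\frac m4+\frac s2+\frac12$, $b+\frac12=\frac{m+1}{2}$) and applying the symmetry $F(\alpha,\beta;\gamma;w)=(1-w)^{\gamma-\alpha-\beta}F(\gamma-\alpha,\gamma-\beta;\gamma;w)$ once with $w=-\sinh(r)^2$, $(1-w)=\cosh(r)^2$, to swap $\frac m4+\frac s2+\frac12\leftrightarrow\frac m4-\frac s2$, one lands on $F\big(\frac m4+\frac s2,\frac m4-\frac s2;\frac{m+1}{2};-\sinh(r)^2\big)$, which is~\eqref{sphfcthyper} for $\GG$ (recall $m_0=m+2$ there, so $\frac{m+m_0}{4}=\frac{m+1}{2}$).

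The main obstacle is bookkeeping rather than depth: one must pick exactly the right quadratic transformation among the many in the literature (the relevant one is the pair of transformations relating $F(a,b;2b;\cdot)$ to a hypergeometric in the variable $\big(\frac{z}{2-z}\big)^2$, e.g. from~\cite[\S~2.11]{EMOT:HigherTranscendentalFunctions.Vol.I}), verify that all the normalizing Gamma factors and powers of $2$ and $e^{\pm r}$ reconcile exactly (Legendre duplication is the key identity here), and keep the analytic-continuation caveat from the footnote in mind so that the identities, first proved for $r$ small (where $|1-e^{-2r}|<1$ and $|\sinh(r)^2|<1$), extend to all $r\in\R$ and all $s\in\C$ by analyticity in $s$ and in $r$. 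Once the first integral formula is in hand, everything else is an application of classical hypergeometric identities with no convergence subtleties beyond those already flagged.
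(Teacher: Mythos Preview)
Your derivation of the first two equalities is correct and matches the paper's proof essentially verbatim: the paper also cites the integral formula from Takahashi, then makes the same substitution $\cos\theta=1-2t$ to reach Euler's integral representation of $F\big(\tfrac m2+s,\tfrac m2;m;1-e^{-2r}\big)$, with Legendre duplication clearing the constants.

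For the third equality your route is slightly different from the paper's and a bit shorter, but the final step as written does not quite work. After your quadratic transformation you are at
\[
(\cosh r)^{-(\frac m2+s)}\,F\Big(\tfrac m4+\tfrac s2,\ \tfrac m4+\tfrac s2+\tfrac12;\ \tfrac{m+1}{2};\ \tanh(r)^2\Big),
\]
with argument $\tanh(r)^2$, not $-\sinh(r)^2$; the remark that ``$-\tanh(r)^2\cdot\cosh(r)^2=-\sinh(r)^2$ rescales the argument'' is not a valid hypergeometric move, and the Euler transformation $F(\alpha,\beta;\gamma;w)=(1-w)^{\gamma-\alpha-\beta}F(\gamma-\alpha,\gamma-\beta;\gamma;w)$ swaps \emph{both} parameters, so it cannot send $\tfrac m4+\tfrac s2+\tfrac12$ to $\tfrac m4-\tfrac s2$ while fixing $\tfrac m4+\tfrac s2$. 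What you need instead is a single Pfaff transformation $F(\alpha,\beta;\gamma;w)=(1-w)^{-\alpha}F(\alpha,\gamma-\beta;\gamma;\tfrac{w}{w-1})$ applied at $w=\tanh(r)^2$: then $\tfrac{w}{w-1}=-\sinh(r)^2$, $(1-w)^{-\alpha}=\cosh(r)^{\frac m2+s}$ cancels your prefactor, and $\gamma-\beta=\tfrac m4-\tfrac s2$, giving the target directly. The paper takes a longer route through three transformations (a different quadratic identity with $z=\tanh(r/2)$, then Pfaff, then another quadratic identity), so your approach, once corrected, is actually the more economical one.
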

\begin{proof}
	From~\eqref{startingpoint} it is elementary to verify the first expression for $\varphi_s(a_r)$, but we simply give a reference to~\cite[Ch.~I~\S~3~(17)]{Tak:SurLesRepr'esentationsUnitairesDesGroupesDeLorentzG'en'eralis'es}. Using the substitution $\cos(\theta)=1-2t$ we find that
	\begin{eqnarray*}
		\frac{\sqrt \pi \Gamma\left(\frac{m}{2}\right)}{\Gamma\left(\frac{m+1}{2}\right)}\varphi_{s}(a_r) 
		& = & \int_0^1 \frac{(4t(1-t))^\frac{m-1}{2}(t(1-t))^{-\frac{1}{2}}}{\left(\cosh(r)+\sinh(r)(1-2t)\right)^{s+\frac{m}{2}}}\dd t\\
		& = & 2^{m-1} e^{-\left(s+\frac{m}{2}\right)r}\int_0^1 \frac{t^{\frac{m}{2}-1}(1-t)^{\frac{m}{2}-1}}{\left(1-(1-e^{-2r})t)\right)^{s+\frac{m}{2}}}\dd t\\
		& = & \frac{2^{m-1}\Gamma\left(\frac{m}{2}\right)^2}{\Gamma(m)}e^{-\left(s+\frac{m}{2}\right)r} F\left(\frac{m}{2}+s,\frac{m}{2};m;1-e^{-2r}\right),
	\end{eqnarray*}
	where the last equality follows from
	\begin{equation*}
		F\left(a,b;c;z\right)=\frac{\Gamma\left(c\right)}{\Gamma\left(b\right)\Gamma\left(c-b\right)}\int_0^1t^{b-1}(1-t)^{c-b-1}(1-t z)^{-a}\dd t,
	\end{equation*}
	which holds for $z\in\C\setminus[1,\infty[$
	and $\Re(c)>\Re(b)>0$ (cf.~\cite[\S~2.1~(10)]{EMOT:HigherTranscendentalFunctions.Vol.I}). Using Legendre's duplication formula,
	\begin{equation}
		\label{duplication}
		\Gamma(2z)=\frac{2^{2z-1}}{\sqrt{\pi}}\Gamma (z)\Gamma\Big(z+\frac{1}{2}\Big)
	\end{equation}
	(cf.~\cite[\S~1.2~(15)]{EMOT:HigherTranscendentalFunctions.Vol.I}), with $2z=m$ we arrive at the second expression for $\varphi_s(a_r)$.
	
	We continue from the second expression for $\varphi_s(a_r)$ in order to obtain the last one. Since
	\begin{equation*}
		\frac{4z}{(1+z)^2}=1-e^{-2r}\iff z=\tanh\left(\frac{r}{2}\right)
	\end{equation*}
	we find, using
	\begin{equation*}
		F\Big(a,b;2b;\frac{4z}{(1+z)^2}\Big)=(1+z)^{2a}F\Big(a,a+\frac{1}{2}-b;b+\frac{1}{2};z^2\Big)
	\end{equation*}
	(cf.~\cite[\S~2.1~(24)]{EMOT:HigherTranscendentalFunctions.Vol.I}), that
	\begin{eqnarray*}
		\varphi_{s}(a_r) & = & \frac{\left(1+\tanh\left(\frac{r}{2}\right)\right)^{m+2s}}{e^{\left(\frac{m}{2}+s\right)r}}F\Big(\frac{m}{2}+s,\frac{1}{2}+s;\frac{m+1}{2};\tanh\left(\frac{r}{2}\right)^2\Big)\\
		& = & \cosh\left(\frac{r}{2}\right)^{-m-2s}F\Big(\frac{m}{2}+s,\frac{1}{2}+s;\frac{m+1}{2};\tanh\left(\frac{r}{2}\right)^2\Big).
	\end{eqnarray*}
	Since
	\begin{equation*}
		z=\tanh\left(\frac{r}{2}\right)^2 \iff \frac{z}{z-1}=-\sinh\left(\frac{r}{2}\right)^2
	\end{equation*}
	and
	\begin{equation*}
		1-\tanh\left(\frac{r}{2}\right)^2=\frac{1}{\cosh\left(\frac{r}{2}\right)^2}
	\end{equation*}
	we find, using
	\begin{equation*}
		F\left(a,b;c;z\right)=(1-z)^{-a}F\Big(a,c-b;c;\frac{z}{z-1}\Big)
	\end{equation*}
	(cf.~\cite[\S~2.1~(22)]{EMOT:HigherTranscendentalFunctions.Vol.I}), that
	\begin{equation*}
		\varphi_{s}(a_r) = F\Big(\frac{m}{2}+s,\frac{m}{2}-s;\frac{m+1}{2};-\sinh\left(\frac{r}{2}\right)^2\Big).
	\end{equation*}
	Since
	\begin{equation*}
		z=-\sinh\left(\frac{r}{2}\right)^2 \iff 4z(1-z)=-\sinh(r)^2
	\end{equation*}
	we find, using
	\begin{equation*}
		F\Big(a,b;a+b+\frac{1}{2};4z(1-z)\Big)=F\Big(2a,2b;a+b+\frac{1}{2};z\Big)
	\end{equation*}
	(cf.~\cite[\S~2.1~(27)]{EMOT:HigherTranscendentalFunctions.Vol.I}), that
	\begin{equation*}
		\varphi_{s}(a_r) = F\Big(\frac{m}{4}+\frac{s}{2},\frac{m}{4}-\frac{s}{2};\frac{m+1}{2};-\sinh(r)^2\Big),
	\end{equation*}
	which is the last of the claimed formulas.
\end{proof}

We now turn our attention to the main technical goal of this section, namely to write up the spherical functions using only a single representation of $\IN\IA$ (since the spherical functions are $\K$-bi-invariant we can view them as $\K$-left-invariant functions on $\G/\K=\IN\IA$). Much of the following resembles~\cite[Ch.~5]{CCJJV:GroupsWithTheHaagerupProperty}, including several of the techniques, but the end result is independent, since our setting is a degenerate case of the one considered in~\cite[Ch.~5]{CCJJV:GroupsWithTheHaagerupProperty} (here $\IN$ is step-one nilpotent instead of step-two). We start by changing from the sphere to the plane through stereographic projection from the vector $\vec\zeta_0$ given by
\begin{equation*}
	\vec\zeta_0=
	\left(
	\begin{array}{c}
		1\\
		0\\
		\vdots\\
		0
	\end{array}
	\right).
\end{equation*}
We let $\vec x_{\vec\zeta}$ denote the stereographic projection of $\vec\zeta\in S^m\setminus\{\vec\zeta_0\}$ from $\vec\zeta_0$, which is given by
\begin{equation*}
	\vec x_{\vec\zeta}=\frac{1}{1-\zeta_1}
	\left(
	\begin{array}{c}
		\zeta_2\\
		\zeta_3\\
		\vdots\\
		\zeta_n
	\end{array}
	\right)
	\qquad(\vec\zeta\in S^m\setminus\{\vec\zeta_0\}).
\end{equation*}
The inverse of this stereographic projection is given by
\begin{equation*}
	\vec \zeta_{\vec x}=\frac{1}{\|\vec x\|^2+1}
	\left(
	\begin{array}{c}
		\|\vec x\|^2-1\\
		2x_1\\
		\vdots\\
		2x_m
	\end{array}
	\right)
	\qquad(\vec x\in\R^m).
\end{equation*}
In the following lemma we choose a family of unitaries from $\ELL^2(S^m)$ to $\ELL^2(\R^m)$, which effectuates the above stereographic projection.
\begin{lemma}
	\label{unitaries}
	For $t\in\R$
	\begin{equation*}
		U_{it}:\ELL^2(S^m)\to \ELL^2(\R^m)
	\end{equation*}
	given by
	\begin{equation*}
		(U_{it}h)(\vec x)=\bigg(\frac{\Gamma(m)}{\pi^{\frac{m}{2}}\Gamma\left(\frac{m}{2}\right)}\bigg)^{\frac{1}{2}}\left(\|\vec x\|^2+1\right)^{-it-\frac{m}{2}}h(\vec\zeta_{\vec x})\qquad(\vec x\in\R^m)
	\end{equation*}
	for $h\in \ELL^2(S^m)$, is unitary. Furthermore, we have
	\begin{equation*}
		(U_{it}^*f)(\vec\zeta)=\bigg(\frac{\pi^{\frac{m}{2}}\Gamma\left(\frac{m}{2}\right)}{\Gamma(m)}\bigg)^{\frac{1}{2}}\left(\|\vec x_{\vec\zeta}\|^2+1\right)^{it+\frac{m}{2}}f(\vec x_{\vec\zeta})\qquad(\vec\zeta\in S^m\setminus\{\vec\zeta_0\})
	\end{equation*}
	for $f\in \ELL^2(\R^m)$.
\end{lemma}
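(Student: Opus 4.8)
The plan is to show that $U_{it}$ is a surjective isometry, hence unitary, and that the operator $V_{it}$ defined by the displayed formula for $U_{it}^\adjoint$ is its two-sided inverse (equivalently, its adjoint). The one substantial ingredient is the change-of-variables formula for the inverse stereographic projection $\vec x\mapsto\vec\zeta_{\vec x}$.

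First I would record that $\vec x\mapsto\vec\zeta_{\vec x}$ is a conformal diffeomorphism of $\R^m$ onto $S^m\setminus\seto{\vec\zeta_0}$ with conformal factor $\lambda(\vec x)=\tfrac{2}{\|\vec x\|^2+1}$, so that the (unnormalised) surface measure on $S^m$ pulls back to $\lambda(\vec x)^m$ times Lebesgue measure on $\R^m$. Since $\ELL^2(S^m)$ is taken with respect to the normalised $\KK$-invariant probability measure (the normalisation being the one for which $\|\unit\|_{\ELL^2(S^m)}=1$, consistent with $\varphi_s$ having value $1$ at the identity via~\eqref{startingpoint}), one divides by the surface area $\tfrac{2\pi^{(m+1)/2}}{\Gamma((m+1)/2)}$ of $S^m$ and simplifies by Legendre's duplication formula~\eqref{duplication} to obtain
\begin{equation*}
	\int_{S^m}g(\vec\zeta)\,\dd\vec\zeta=\frac{\Gamma(m)}{\pi^{m/2}\Gamma(m/2)}\int_{\R^m}g(\vec\zeta_{\vec x})\,(\|\vec x\|^2+1)^{-m}\,\dd\vec x
\end{equation*}
for every $g\in\ELL^1(S^m)$; the missing point $\vec\zeta_0$ is a null set, so it is irrelevant. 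Applying this with $g=|h|^2$ and using $|(\|\vec x\|^2+1)^{-it}|=1$ for $t$ real gives $\|U_{it}h\|_{\ELL^2(\R^m)}=\|h\|_{\ELL^2(S^m)}$, so $U_{it}$ is isometric; reading the same identity in the other direction (substituting $\vec x=\vec x_{\vec\zeta}$) shows likewise that the operator $V_{it}\colon\ELL^2(\R^m)\to\ELL^2(S^m)$ given by the claimed formula for $U_{it}^\adjoint$ is well defined and isometric.

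Second, since $\vec\zeta_{\vec x_{\vec\zeta}}=\vec\zeta$ for $\vec\zeta\neq\vec\zeta_0$ and $\vec x_{\vec\zeta_{\vec x}}=\vec x$, a direct substitution in which the scalar prefactors and the powers $(\|\cdot\|^2+1)^{\pm(it+m/2)}$ cancel shows $U_{it}V_{it}=\mathrm{id}_{\ELL^2(\R^m)}$ and $V_{it}U_{it}=\mathrm{id}_{\ELL^2(S^m)}$ almost everywhere. Hence $U_{it}$ is an isometry with a two-sided inverse, so it is unitary, and $U_{it}^\adjoint=U_{it}^{-1}=V_{it}$, which is the second assertion.

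The only step that is not pure bookkeeping is pinning down the constant in the change-of-variables identity: one must compute (or quote) the conformal factor $\tfrac{2}{\|\vec x\|^2+1}$ and then check that $\tfrac{\Gamma(m)}{\pi^{m/2}\Gamma(m/2)}$ is exactly $2^m$ divided by the surface area of $S^m$, which is precisely where the duplication formula enters. Once this normalisation is in place, the rest is routine manipulation of the explicit expressions for $\vec\zeta_{\vec x}$ and $\vec x_{\vec\zeta}$.
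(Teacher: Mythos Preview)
Your proof is correct and follows essentially the same approach as the paper: both establish that $U_{it}$ is an isometry via the change-of-variables formula for stereographic projection (with the Jacobian factor $(2/(\|\vec x\|^2+1))^m$ and Legendre's duplication formula fixing the constant), and both obtain surjectivity and the adjoint formula by directly checking that the claimed expression for $U_{it}^\adjoint$ is the two-sided inverse. The paper is slightly terser on the inverse step (``easily done''), while you also note that $V_{it}$ is itself isometric, but this is a minor elaboration rather than a different route.
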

\begin{proof}
	For $t\in\R$ and $h\in \ELL^2(S^m)$ we have
	\begin{eqnarray*}
		\|h\|^2_2 & = & \int_{S^m}|h(\vec\zeta)|^2 \dd \vec\zeta\\
		& = & \frac{\Gamma\left(\frac{m}{2}+\frac{1}{2}\right)}{2\pi^{\frac{m}{2}+\frac{1}{2}}} \int_{\R^m} |h(\vec\zeta_{\vec x})|^2\left(\frac{2}{\|\vec x\|^2+1}\right)^m\dd \vec x,
	\end{eqnarray*}
	where $\dd\vec\zeta$ denotes the normalized Lebesgue measure on the sphere $S^m$ while $\dd\vec x$ denotes the Lebesgue measure on $\R^m$. The constant which shows up in the second line is one over the surface area of $S^m$. In the last line we used that
	\begin{equation*}
		\sqrt{\det\bigg(\bigg(\frac{\partial \vec\zeta_{\vec x}}{\partial x_i}\cdot\frac{\partial \vec\zeta_{\vec x}}{\partial x_j}\bigg)_{i,j=1}^m\bigg)}=\left(\frac{2}{\|\vec x\|^2+1}\right)^m.
	\end{equation*}
	Using Legendre's duplication formula (cf.~\eqref{duplication}) we find that
	\begin{equation*}
		\|h\|^2_2=\frac{\Gamma(m)}{\pi^{\frac{m}{2}}\Gamma\left(\frac{m}{2}\right)} \int_{\R^m} |h(\vec\zeta_{\vec x})|^2\left(\|\vec x\|^2+1\right)^{-m}\dd \vec x,
	\end{equation*}
	and finally
	\begin{equation*}
		\|h\|^2_2=\|U_{it}h\|^2_2.
	\end{equation*}
	To check surjectivity of $U_{it}$ and the claimed expression for $U_{it}^*$, it is enough to verify that the claimed expression for $U_{it}^*$ is in fact the inverse of $U_{it}$, which is easily done.
\end{proof}
\begin{proposition}
	\label{rep1}
	For $t\in\R$
	\begin{equation*}
		\varphi_{it}(g)=\ip{\pi_{it}(g) f_{it}}{ f_{it}}_{\ELL^2(\R^m)}\qquad (g\in\G),
	\end{equation*}
	where $(\pi_{it},\ELL^2(\R^m))$ is the strongly continuous, irreducible, unitary representation of $\G$ given by
	\begin{equation*}
		\pi_{it}(g)=U_{it}\rho_{it}(g)U_{it}^*\qquad(g\in\G),
	\end{equation*}
	and where $f_{it}$ is the $\K$-invariant norm $1$ vector in $\ELL^2(\R^m)$ given by
	\begin{equation*}
		f_{it}=U_{it}\unit.
	\end{equation*}
	More specifically,
	\begin{equation*}
		(\pi_{it}(g)f)(\vec x)=\big(\tfrac{1}{2}(\|\vec x\|^2+1)(1-(g^{-1}\vec\zeta_{\vec x})_1)e^{r(g^{-1}\vec\zeta_{\vec x})}\big)^{-it-\frac{m}{2}}f(\vec x_{g^{-1}\vec\zeta_{\vec x}})
	\end{equation*}
	for $f\in \ELL^2(\R^m)$, $\vec x\in\R^m$ and $g\in\G$, while
	\begin{equation*}
		f_{it}(\vec x)=\bigg(\frac{\Gamma(m)}{\pi^{\frac{m}{2}}\Gamma\left(\frac{m}{2}\right)}\bigg)^{\frac{1}{2}}\left(\|\vec x\|^2+1\right)^{-it-\frac{m}{2}}\qquad(\vec x\in\R^m).
	\end{equation*}
\end{proposition}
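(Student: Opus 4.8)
The plan is to derive the proposition purely formally from the unitarity of $U_{it}$ established in Lemma~\ref{unitaries}, the coefficient description~\eqref{startingpoint} of $\varphi_{it}$, and the properties of $\rho_{it}$ recorded just after~\eqref{rhoN}. First, since $U_{it}$ is unitary, $g\mapsto\pi_{it}(g)=U_{it}\rho_{it}(g)U_{it}^*$ is a representation of $\G$ on $\ELL^2(\R^m)$ unitarily equivalent to $\rho_{it}$, and therefore it is strongly continuous, unitary and irreducible, because $\rho_{it}$ has all three properties for $s=it\in i\R$. Putting $f_{it}=U_{it}\unit$ and cancelling $U_{it}^*U_{it}$ gives, for every $g\in\G$,
\begin{equation*}
\ip{\pi_{it}(g)f_{it}}{f_{it}}_{\ELL^2(\R^m)}=\ip{U_{it}\rho_{it}(g)U_{it}^*U_{it}\unit}{U_{it}\unit}=\ip{\rho_{it}(g)\unit}{\unit}_{\ELL^2(S^m)}=\varphi_{it}(g),
\end{equation*}
the last equality being~\eqref{startingpoint}.

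Next I would read off the stated formula for $f_{it}$ by simply substituting the constant function $\unit$ into the definition of $U_{it}$ from Lemma~\ref{unitaries}. The vector $f_{it}$ has norm one since $U_{it}$ is unitary and $\|\unit\|_{\ELL^2(S^m)}=1$ (the measure $\dd\vec\zeta$ is normalized). It is $\K$-invariant: for $k\in\K=1\times\KK$ one has $k_{00}=1$ and $k_{0q}=0$ for $q\geq1$, hence $r(k^{-1}\vec\zeta)=\ln 1=0$ while $\vec\zeta\mapsto k^{-1}\vec\zeta$ is a bijection of $S^m$, so $\rho_{it}(k)\unit=\unit$ by~\eqref{rhoN}; conjugating by $U_{it}$ yields $\pi_{it}(k)f_{it}=f_{it}$.

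The remaining point is the explicit formula for $\pi_{it}(g)$, which is a direct composition of the three maps $U_{it}^*$, then $\rho_{it}(g)$, then $U_{it}$. Applying the formula for $U_{it}^*$, then~\eqref{rhoN} with $s=it$, then the formula for $U_{it}$, the normalizing constant in $U_{it}$ cancels the reciprocal constant in $U_{it}^*$ and one obtains
\begin{equation*}
(\pi_{it}(g)f)(\vec x)=(\|\vec x\|^2+1)^{-it-\frac m2}\,e^{-(\frac m2+it)r(g^{-1}\vec\zeta_{\vec x})}\,(\|\vec x_{g^{-1}\vec\zeta_{\vec x}}\|^2+1)^{it+\frac m2}\,f(\vec x_{g^{-1}\vec\zeta_{\vec x}}).
\end{equation*}
The only genuine computation is the elementary identity $\|\vec x_{\vec\zeta}\|^2+1=\tfrac{2}{1-\zeta_1}$ for $\vec\zeta\in S^m\setminus\{\vec\zeta_0\}$, which follows from $\sum_{j=1}^n\zeta_j^2=1$ together with the displayed formula for the stereographic projection $\vec x_{\vec\zeta}$. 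Substituting $\vec\zeta=g^{-1}\vec\zeta_{\vec x}$ into this identity and gathering the three exponents rewrites the product of the first three factors as $\bigl(\tfrac12(\|\vec x\|^2+1)(1-(g^{-1}\vec\zeta_{\vec x})_1)e^{r(g^{-1}\vec\zeta_{\vec x})}\bigr)^{-it-\frac m2}$, which is exactly the claimed expression.

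I do not expect a serious obstacle: once Lemma~\ref{unitaries} is in hand this is essentially bookkeeping. The one point that deserves a word of care is that $\vec x_{\vec\zeta}$ is defined only for $\vec\zeta\neq\vec\zeta_0$, so the formulas for $U_{it}^*$ and for $\pi_{it}(g)$ hold only almost everywhere on $\R^m$; since $g^{-1}$ is a diffeomorphism of $S^m$, the exceptional set $\setw{\vec x}{g^{-1}\vec\zeta_{\vec x}=\vec\zeta_0}$ is at most a single point and hence Lebesgue-null, so this is harmless.
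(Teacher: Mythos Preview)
Your argument is correct and follows essentially the same route as the paper: both derive the coefficient formula and the properties of $\pi_{it}$ and $f_{it}$ directly from unitary equivalence with $\rho_{it}$, then compute $U_{it}\rho_{it}(g)U_{it}^*$ explicitly and simplify using the identity $\|\vec x_{\vec\zeta}\|^2+1=\tfrac{2}{1-\zeta_1}$. Your version is actually a bit more detailed, since the paper does not spell out the $\K$-invariance of $f_{it}$ or the null-set caveat about $\vec\zeta_0$.
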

\begin{proof}
	Everything, except for the specific form of $\pi_{it}$ follows straight from the corresponding properties for the representation $\rho_{it}$. To prove the remaining we let $f\in \ELL^2(\R^m)$ be given, and note that
	\begin{eqnarray*}
		(\pi_{it}(g)f)(\vec x) & = & (U_{it}\rho_{it}(g)U_{it}^*f)(\vec x)\\
		& = & \bigg(\frac{\|\vec x\|^2+1}{\|\vec x_{g^{-1}\vec\zeta_{\vec x}}\|^2+1}e^{r(g^{-1}\vec\zeta_{\vec x})}\bigg)^{-it-\frac{m}{2}}f(\vec x_{g^{-1}\vec\zeta_{\vec x}}),
	\end{eqnarray*}
	which follows from the explicit expressions for $U_{it}$, $\rho_{it}$ and $U_{it}^*$ (cf.~Lemma~\ref{unitaries} and~\eqref{rhoN}). Generally, we have
	\begin{equation*}
		\|\vec x_{\vec\zeta}\|^2=\frac{1-\zeta_1^2}{(1-\zeta_1)^2}=\frac{1+\zeta_1}{1-\zeta_1}\qquad(\vec\zeta\in S^m\setminus\{\vec\zeta_0\}),
	\end{equation*}
	so
	\begin{equation*}
		\|\vec x_{g^{-1}\vec\zeta_{\vec x}}\|^2+1=\frac{2}{1-(g^{-1}\vec\zeta_{\vec x})_1}\qquad(\vec x\in \R^m),
	\end{equation*}
	which finishes the proof
\end{proof}
\begin{proposition}
	\label{rep1effect}
	For $t\in\R$ and $f\in \ELL^2(\R^m)$
	\begin{equation*}
		(\pi_{it}(a_r)f)(\vec x)=e^{-\left(it+\tfrac{m}{2}\right)r}f(e^{-r}\vec x)\qquad(\vec x\in\R^m)
	\end{equation*}
	for $r\in\R$, and
	\begin{equation*}
		(\pi_{it}(n_{\vec y})f)(\vec x)=f(\vec x-\vec y)\qquad(\vec x\in\R^m)
	\end{equation*}
	for $\vec y\in\R^m$.
\end{proposition}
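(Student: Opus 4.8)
The plan is to specialise the explicit formula of Proposition~\ref{rep1},
\[ (\pi_{it}(g)f)(\vec x)=\big(\tfrac{1}{2}(\|\vec x\|^2+1)(1-(g^{-1}\vec\zeta_{\vec x})_1)e^{r(g^{-1}\vec\zeta_{\vec x})}\big)^{-it-\frac{m}{2}}f(\vec x_{g^{-1}\vec\zeta_{\vec x}}), \]
to the two cases $g=a_r$ and $g=n_{\vec y}$. Since $r\mapsto a_r$ and $\vec x\mapsto n_{\vec x}$ are group isomorphisms we have $a_r^{-1}=a_{-r}$ and $n_{\vec y}^{-1}=n_{-\vec y}$, and these inverses are given by the same explicit matrices (with $r$, resp.\ $\vec y$, negated) displayed at the beginning of the section. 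So everything reduces to evaluating, for $h=a_{-r}$ and for $h=n_{-\vec y}$, three quantities: $h_{00}+\sum_{q=1}^{n}h_{0q}(\vec\zeta_{\vec x})_q$ (which is $e^{r(h\vec\zeta_{\vec x})}$), the first coordinate $(h\vec\zeta_{\vec x})_1$, and the coordinates $(h\vec\zeta_{\vec x})_q$ for $q\geq2$ (from which $\vec x_{h\vec\zeta_{\vec x}}$ is recovered by stereographic projection), using $(\vec\zeta_{\vec x})_1=\frac{\|\vec x\|^2-1}{\|\vec x\|^2+1}$ and $(\vec\zeta_{\vec x})_q=\frac{2x_{q-1}}{\|\vec x\|^2+1}$.

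First I would treat $g=a_r$. A short computation with the matrix $a_{-r}$ gives $e^{r(a_{-r}\vec\zeta_{\vec x})}=\frac{\|\vec x\|^2e^{-r}+e^r}{\|\vec x\|^2+1}$ and $(a_{-r}\vec\zeta_{\vec x})_1=\frac{\|\vec x\|^2e^{-r}-e^r}{\|\vec x\|^2e^{-r}+e^r}$, hence $1-(a_{-r}\vec\zeta_{\vec x})_1=\frac{2e^r}{\|\vec x\|^2e^{-r}+e^r}$; multiplying the three factors, the prefactor telescopes to $e^r$. Dividing $(a_{-r}\vec\zeta_{\vec x})_q=\frac{2x_{q-1}}{\|\vec x\|^2e^{-r}+e^r}$ ($q\geq2$) by $1-(a_{-r}\vec\zeta_{\vec x})_1$ gives $e^{-r}x_{q-1}$, so $\vec x_{a_{-r}\vec\zeta_{\vec x}}=e^{-r}\vec x$, and therefore $(\pi_{it}(a_r)f)(\vec x)=e^{-(it+\frac{m}{2})r}f(e^{-r}\vec x)$. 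The case $g=n_{\vec y}$ is completely parallel: the analogous computation with $n_{-\vec y}$ gives $e^{r(n_{-\vec y}\vec\zeta_{\vec x})}=\frac{\|\vec x-\vec y\|^2+1}{\|\vec x\|^2+1}$ and $1-(n_{-\vec y}\vec\zeta_{\vec x})_1=\frac{2}{\|\vec x-\vec y\|^2+1}$, so the prefactor telescopes to $1$, while $\vec x_{n_{-\vec y}\vec\zeta_{\vec x}}=\vec x-\vec y$, giving $(\pi_{it}(n_{\vec y})f)(\vec x)=f(\vec x-\vec y)$. (In both cases the first coordinate of $g^{-1}\vec\zeta_{\vec x}$ is strictly smaller than $1$, so $g^{-1}\vec\zeta_{\vec x}\neq\vec\zeta_0$ and the stereographic-projection formula is legitimately applied.)

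I do not expect any real obstacle here: the content is simply that stereographic projection conjugates the M\"obius-type boundary action of $\IA\IN$ on $S^m$ into the affine action of $\IA\IN$ on $\R^m$ (dilations $\vec x\mapsto e^{-r}\vec x$ and translations $\vec x\mapsto\vec x-\vec y$), and the weight $(\|\vec x\|^2+1)^{-it-m/2}$ built into the unitaries $U_{it}$ is exactly the cocycle that makes the Jacobian-type prefactor in Proposition~\ref{rep1} collapse. The only mildly delicate bookkeeping is in the $n_{\vec y}$ case, where one expands the $0$th and $1$st rows of $n_{-\vec y}$ against $(1,\vec\zeta_{\vec x})$ and completes the square via $\|\vec x\|^2-2\ip{\vec x}{\vec y}+\|\vec y\|^2=\|\vec x-\vec y\|^2$; everything else is routine simplification of hyperbolic expressions. (Alternatively one could first compute $\rho_{it}(a_r)$ and $\rho_{it}(n_{\vec y})$ on $\ELL^2(S^m)$ and then conjugate by $U_{it}$ using Lemma~\ref{unitaries}, but going through Proposition~\ref{rep1} is cleaner since the conjugation is already absorbed there.)
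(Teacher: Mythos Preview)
Your proposal is correct and follows exactly the approach indicated by the paper, whose proof consists of the single sentence ``This follows from Proposition~\ref{rep1} via easy (but tedious) calculations.'' You have simply carried out those calculations explicitly, and the intermediate expressions you record (for the exponential factor, the first coordinate of $g^{-1}\vec\zeta_{\vec x}$, and the stereographic projection of the remaining coordinates) are all correct.
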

\begin{proof}
	This follows from Proposition~\ref{rep1} via easy (but tedious) calculations.
\end{proof}
From Proposition~\ref{rep1effect} it is easily seen that for $t\in\R$ the representation $\pi_{it}|_{\IN\IA}$ considered here corresponds to the representation $\pi_{-it}$ considered in~\cite[p.~72]{CCJJV:GroupsWithTheHaagerupProperty}.
\begin{proposition}
	\label{rep2}
	For $t\in\R$
	\begin{equation*}
		\varphi_{it}(g)=\ip{\hat\pi_{it}(g) \hat{f_{it}}}{\hat{f_{it}}}_{\ELL^2(\R^m)}\qquad (g\in\G),
	\end{equation*}
	where $(\hat\pi_{it},\ELL^2(\R^m))$ is the strongly continuous, irreducible, unitary representation of $\G$ given by
	\begin{equation*}
		\hat\pi_{it}(g)=\cF\pi_{it}(g)\cF^*\qquad(g\in\G),
	\end{equation*}
	and where $\hat f_{it}$ is the $\K$-invariant norm $1$ vector in $\ELL^2(\R^m)$ given by
	\begin{equation*}
		\hat f_{it}=\cF f_{it},
	\end{equation*}
	where $\cF$ is the Fourier--Plancherel transform on $\ELL^2(\R^m)$. More specifically,
	\begin{equation*}
		\hat f_{it}(\vec y)=\left(\frac{\Gamma(m)}{\pi^{\frac{m}{2}}\Gamma\left(\frac{m}{2}\right)}\right)^{\frac{1}{2}}\frac{2^{1-\frac{m}{2}}}{\Gamma\left(\frac{m}{2}+it\right)}\left(\frac{\|\vec y\|}{2}\right)^{it}K_{it}(\|\vec y\|)\qquad(\vec y\in\R^m\setminus\{\vec 0\}),
	\end{equation*}
	where $K_\nu (z)$ is the \emph{modified Bessel function of the second kind} of order $\nu\in\C$ in the variable $z\in\C\setminus\{0\}$ as defined in \cite[\S~7.2]{EMOT:HigherTranscendentalFunctions.Vol.II}.\footnote{In the reference given, $K_\nu$ is called the modified Bessel function of the third kind.}
\end{proposition}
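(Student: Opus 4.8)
\emph{Proof plan.}
The statements about $\hat\pi_{it}$ and the coefficient formula are immediate from Proposition~\ref{rep1} and the unitarity of $\cF$: conjugation by a unitary preserves strong continuity, irreducibility and unitarity, so $\hat\pi_{it}=\cF\pi_{it}\cF^*$ inherits these properties, and for $g\in\G$
\begin{align*}
	\ip{\hat\pi_{it}(g)\hat f_{it}}{\hat f_{it}}_{\ELL^2(\R^m)}
	&=\ip{\cF\pi_{it}(g)\cF^*\cF f_{it}}{\cF f_{it}}_{\ELL^2(\R^m)}\\
	&=\ip{\pi_{it}(g)f_{it}}{f_{it}}_{\ELL^2(\R^m)}=\varphi_{it}(g).
\end{align*}
Similarly $\|\hat f_{it}\|_2=\|f_{it}\|_2=1$, and since $\pi_{it}(k)f_{it}=f_{it}$ for $k\in\K$ (part of Proposition~\ref{rep1}) one gets $\hat\pi_{it}(k)\hat f_{it}=\cF\pi_{it}(k)f_{it}=\hat f_{it}$, so $\hat f_{it}$ is $\K$-invariant. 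Hence the only real task is to identify $\hat f_{it}=\cF f_{it}$, i.e.\ --- using the formula for $f_{it}$ in Proposition~\ref{rep1} --- to compute the Fourier--Plancherel transform of $(\|\vec x\|^2+1)^{-it-\frac{m}{2}}$ on $\R^m$.

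For that I would use the classical subordination argument. For $\beta\in\C$ with $\Re(\beta)>\frac{m}{2}$ the radial function $g_\beta(\vec x):=(1+\|\vec x\|^2)^{-\beta}$ lies in $\ELL^1(\R^m)\cap\ELL^2(\R^m)$, so its Plancherel transform is the ordinary Fourier integral; writing
\begin{equation*}
	g_\beta(\vec x)=\frac{1}{\Gamma(\beta)}\int_0^\infty \tau^{\beta-1}e^{-\tau(1+\|\vec x\|^2)}\,\dd\tau,
\end{equation*}
interchanging the integrals by Fubini, and using $\cF[e^{-\tau\|\cdot\|^2}](\vec y)=(2\tau)^{-m/2}e^{-\|\vec y\|^2/(4\tau)}$, I obtain
\begin{align*}
	(\cF g_\beta)(\vec y)
	&=\frac{2^{-m/2}}{\Gamma(\beta)}\int_0^\infty \tau^{\beta-1-\frac{m}{2}}e^{-\tau-\frac{\|\vec y\|^2}{4\tau}}\,\dd\tau\\
	&=\frac{2^{1-\frac{m}{2}}}{\Gamma(\beta)}\Big(\frac{\|\vec y\|}{2}\Big)^{\beta-\frac{m}{2}}K_{\beta-\frac{m}{2}}(\|\vec y\|),
\end{align*}
the last equality being the integral representation $K_\nu(z)=\frac12(z/2)^{\nu}\int_0^\infty\tau^{-\nu-1}e^{-\tau-z^2/(4\tau)}\,\dd\tau$ (valid for $z>0$ and all $\nu\in\C$; cf.\ \cite{EMOT:HigherTranscendentalFunctions.Vol.II}) together with $K_\nu=K_{-\nu}$.

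The hard part is that the value $\beta=\frac{m}{2}+it$ relevant to $f_{it}$ lies on the line $\Re(\beta)=\frac{m}{2}$, where $g_\beta\notin\ELL^1(\R^m)$ and the interchange above is not licensed, so the identity must be extended by analytic continuation in $\beta$. The plan: on the connected open half-plane $\Re(\beta)>\frac{m}{4}$ one has $g_\beta\in\ELL^2(\R^m)$, and both sides of the last display are holomorphic $\ELL^2(\R^m)$-valued functions of $\beta$ there --- the left side because $\beta\mapsto g_\beta$ is holomorphic into $\ELL^2(\R^m)$ and $\cF$ is bounded, the right side because $1/\Gamma$ and $\nu\mapsto K_\nu(z)$ are entire and the small-$z$ behaviour of $K_\nu$ (order $|\Re\nu|$, with at worst a logarithmic factor) together with its exponential decay at $\infty$ keep the expression and its $\beta$-derivatives locally bounded in $\ELL^2(\R^m)$ precisely when $\Re(\beta)>\frac{m}{4}$. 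These two holomorphic functions agree on the nonempty open set $\Re(\beta)>\frac{m}{2}$, hence on all of $\Re(\beta)>\frac{m}{4}$, in particular at $\beta=\frac{m}{2}+it$. Multiplying through by the normalising constant $\big(\Gamma(m)/(\pi^{m/2}\Gamma(\tfrac{m}{2}))\big)^{1/2}$ from Proposition~\ref{rep1} then yields $\hat f_{it}$ in the claimed closed form. One could instead quote a table of Hankel transforms of $(1+r^2)^{-\beta}$, but its hypotheses encode the same borderline restriction.
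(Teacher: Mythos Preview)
Your argument is correct but follows a genuinely different route from the paper's. For the Fourier transform of $g_\beta(\vec x)=(1+\|\vec x\|^2)^{-\beta}$ with $\Re\beta>\tfrac{m}{2}$, the paper exploits rotational invariance to reduce to a one-dimensional Fourier integral (with a separate treatment of $m=1$ and $m>1$) and then recognizes the cosine-transform representation $K_\nu(z)=\tfrac{\Gamma(\nu+\frac12)2^{\nu-1}}{\sqrt\pi\,z^\nu}\int_{-\infty}^{\infty}(t^2+1)^{-\nu-\frac12}e^{-izt}\,\dd t$; you instead use the Gamma-subordination identity together with the Gaussian Fourier transform and the Mellin-type representation $K_\nu(z)=\tfrac12(z/2)^\nu\int_0^\infty\tau^{-\nu-1}e^{-\tau-z^2/(4\tau)}\,\dd\tau$. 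For the passage to the borderline value $\beta=\tfrac{m}{2}+it$, the paper takes a \emph{real} limit: it shows $\|f_{\sigma+it}-f_{it}\|_2\to0$ as $\sigma\to0^+$, hence $\|\hat f_{\sigma+it}-\hat f_{it}\|_2\to0$, and then extracts an a.e.\ convergent subsequence to identify the pointwise limit. Your approach is the complex-analytic version: both $\beta\mapsto\cF g_\beta$ and the Bessel expression are holomorphic $\ELL^2(\R^m)$-valued on $\{\Re\beta>\tfrac{m}{4}\}$ and agree on $\{\Re\beta>\tfrac{m}{2}\}$, so they agree everywhere by the identity theorem. Your route is cleaner---it avoids the $m=1$ versus $m>1$ case split and the subsequence extraction---at the modest cost of invoking vector-valued holomorphy (which, as you note, is routine via Morera/weak holomorphy and dominated convergence); the paper's route stays with more elementary real-variable tools throughout.
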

\begin{proof}
	The only nontrivial part of the proposition is the explicit formula for $\hat f_{it}$ which we will now prove. From Proposition~\ref{rep1} we know that
	\begin{equation*}
		f_{it}(\vec x)=c_m(\|\vec x\|^2+1)^{-it-\frac{m}{2}}\qquad(\vec x\in\R^m)
	\end{equation*}
	for $t\in\R$, where we write $c_m$ instead of $\big(\frac{\Gamma(m)}{\pi^{\frac{m}{2}}\Gamma(\frac{m}{2})}\big)^{\frac{1}{2}}$ for notational convenience. Unfortunately, $f_{it}\in \ELL^2(\R^m)\setminus L^1(\R^m)$, so it is not trivial to obtain $\hat f_{it}$. To see that $f_{it}\notin L^1(\R^m)$ one can use the substitutions $r=\|\vec x\|$ and $u=r^2$ in
	\begin{eqnarray*}
		\|f_{it}\|_1 & = & c_m \int_{\R^m} (\|\vec x\|^2+1)^{-\frac{m}{2}} \dd\vec x\\
		& = & c'_m \int_0^\infty (r^2+1)^{-\frac{m}{2}}r^{m-1} \dd r\\
		& = & \frac{c'_m}{2} \int_0^\infty \left(u+1\right)^{-\frac{m}{2}}u^{\frac{m}{2}-1} \dd u=\infty,
	\end{eqnarray*}
	where $c'_m$ is the (strictly positive) constant which equals $c_m$ times the surface area of the $m-1$ dimensional sphere $S^{m-1}$ in $\R^m$.	For $s=\sigma+it\in\C$ we let
	\begin{equation*}
		f_{\sigma+it}(\vec x)=c_m(\|\vec x\|^2+1)^{-\sigma-it-\frac{m}{2}}\qquad(\vec x\in\R^m)
	\end{equation*}
	be a perturbation of $f_{it}$ by $\sigma$. Using similar calculations as above we find that
	\begin{equation*}
		\|f_{\sigma+it}\|_2^2=\frac{c_mc'_m}{2}B\Big(\frac{m}{2},\frac{m}{2}+2\sigma\Big)\quad\mbox{and}\quad \|f_{\sigma+it}\|_1=\frac{c'_m}{2}B\Big(\frac{m}{2},\sigma\Big)
	\end{equation*}
	for $\sigma>0$, where $B(a,b)$ is the \emph{Beta function} in two complex variables $a,b$ with strictly positive real part, as defined in \cite[\S~1.5]{EMOT:HigherTranscendentalFunctions.Vol.I}. Since the Beta function is finite, we conclude that $f_{\sigma+it}\in \ELL^2(\R^m)\cap L^1(\R^m)$ for $\sigma>0$. We will now show that $\lim_{\sigma\to 0^+}\|f_{\sigma+it}-f_{it}\|_2=0$, from which it follows that $\lim_{\sigma\to 0^+}\|\hat f_{\sigma+it}-\hat f_{it}\|_2=0$, which is our starting point for finding $\hat f_{it}$. We find
	\begin{eqnarray*}
		\|f_{\sigma+it}-f_{it}\|_2^2
		& = & c_m^2\int_{\R^m}\big((\|\vec x\|^2+1)^{-\frac{m}{2}}((\|\vec x\|^2+1)^{-\sigma}-1) \big)^2\dd \vec x\\
		& = & c_mc'_m\int_0^\infty(r^2+1)^{-m}\big(1-(r^2+1)^{-\sigma}\big)^2 r^{m-1}\dd r,
	\end{eqnarray*}
	where we note that the second term is bounded by $1$ and converges point-wise to $0$, when $\sigma$ converges to $0$ from the right. We can therefore use Lebesgue's dominated convergence theorem to conclude that
	\begin{equation*}
		\lim_{\sigma\to 0^+}\|f_{\sigma+it}-f_{it}\|_2=0,
	\end{equation*}
	since we as integrable dominator can use
	\begin{equation*}
		r\mapsto c_mc'_m(r^2+1)^{-m}r^{m-1}\qquad (r\in\R^+),
	\end{equation*}
	whose integral equals
	\begin{equation*}
		\frac{c_mc'_m}{2}B\left(\frac{m}{2},\frac{m}{2}\right).
	\end{equation*}

	We now turn our attention to finding $\hat f_{\sigma+it}$ for $\sigma>0$, and get
	\begin{equation*}
		\hat f_{\sigma+it}(\vec y)=\frac{c_m}{(2\pi)^{\frac{m}{2}}}\int_{\R^m}(\|\vec x\|^2+1)^{-\sigma-it-\frac{m}{2}}e^{-i\ip{\vec x}{\vec y}}\dd \vec x\qquad(\vec y\in\R^m),
	\end{equation*}
	where we immediately notice that $\hat f_{\sigma+it}$ only depends on $\|\vec y\|$, since both the Lebesgue measure and the inner product in $\R^m$ are invariant under rotation. We therefore find (by rotating $\vec y$ into the first coordinate)
	\begin{equation*}
		\hat f_{\sigma+it}(\vec y)=\frac{c_m}{(2\pi)^{\frac{m}{2}}}\int_{-\infty}^\infty h_{\sigma+it}(x_1)e^{-ix_1\|\vec y\|}\dd x_1\qquad(\vec y\in\R^m),
	\end{equation*}
	where
	\begin{equation*}
		h_{\sigma+it}(x_1)=
		\left\{
		\begin{array}{lll}
			\int_{\R^{m-1}}\left(\|\vec x\|^2+1\right)^{-\sigma-it-\frac{m}{2}}\dd x_2 \cdots \dd x_m & \mbox{if} & m>1\\
			\left(x_1^2+1\right)^{-\sigma-it-\frac{1}{2}} & \mbox{if} & m=1.
		\end{array}
		\right.
	\end{equation*}
	For $m>1$ we find, using first the substitution $x_i'=\frac{x_i}{\sqrt{x_1^2+1}}$ for $i=2,\ldots,m$, then the substitution $r=\sqrt{(x'_2)^2+\cdots (x'_m)^2}$, and finally the substitution $u=r^2$:
	\begin{eqnarray*}
		h_s(x_1)
		& = & (x_1^2+1)^{-s-\frac{1}{2}}\int_{\R^{m-1}}\big((x'_2)^2+\cdots+(x'_m)^2+1\big)^{-s-\frac{m}{2}}\dd x'_2 \cdots \dd x'_m\\
		& = & (x_1^2+1)^{-s-\frac{1}{2}}\frac{c'_{m-1}}{c_{m-1}}\int_0^\infty(r^2+1)^{-s-\frac{m}{2}}r^{m-2}\dd r\\
		& = & (x_1^2+1)^{-s-\frac{1}{2}}\frac{c'_{m-1}}{2c_{m-1}}\int_0^\infty(u+1)^{-s-\frac{m}{2}}u^{\frac{m-1}{2}-1}\dd u\\
		& = & (x_1^2+1)^{-s-\frac{1}{2}}\frac{c'_{m-1}}{2c_{m-1}}B\Big(\frac{m}{2}-\frac{1}{2},\frac{1}{2}+s\Big)\\
		& = & (x_1^2+1)^{-s-\frac{1}{2}}\frac{c'_{m-1}}{2c_{m-1}}\frac{\Gamma\left(\frac{m}{2}-\frac{1}{2}\right)\Gamma\left(\frac{1}{2}+s\right)}{\Gamma\left(\frac{m}{2}+s\right)}
	\end{eqnarray*}
	for $x_1\in\R$, where we remember that $\frac{c'_{m-1}}{c_{m-1}}$ is just the surface area of the $m-2$ dimensional sphere $S^{m-2}$ in $\R^{m-1}$. Before we can put it all together, we need the following integral equation
	\begin{equation*}
		K_\nu(z)=\frac{\Gamma\left(\nu+\frac{1}{2}\right)2^{\nu-1}}{\pi^{\frac{1}{2}}z^\nu}\int_0^\infty \frac{e^{i z t}+e^{-i z t}}{\left(t^2+1\right)^{\nu+\frac{1}{2}}}\dd t\qquad(z>0)
	\end{equation*}
	for $\Re(\nu)>-\frac{1}{2}$ (cf.~\cite[(9.6.25)]{AS:HandbookOfMathematicalFunctionsWithFormulas;Graphs;AndMathematicalTables}). It now easily follows that
	\begin{equation*}
		K_\nu(z)=\frac{\Gamma\left(\nu+\frac{1}{2}\right)2^{\nu-1}}{\pi^{\frac{1}{2}}z^\nu}\int_{-\infty}^\infty \frac{e^{-i z t}}{\left(t^2+1\right)^{\nu+\frac{1}{2}}}\dd t\qquad(z>0)
	\end{equation*}
	for $\Re(\nu)>-\frac{1}{2}$. For $m>1$ and $\Re(s)>0$ we find
	\begin{eqnarray*}
		\hat f_s(\vec y)
		& = & \frac{c_m}{(2\pi)^{\frac{m}{2}}}\frac{c'_{m-1}}{2c_{m-1}}\frac{\Gamma\left(\frac{m}{2}-\frac{1}{2}\right)\Gamma\left(\frac{1}{2}+s\right)}{\Gamma\left(\frac{m}{2}+s\right)}\frac{\pi^{\frac{1}{2}}\|\vec y\|^s}{\Gamma\left(\frac{1}{2}+s\right)2^{s-1}}K_s(\|\vec y\|)\\
		& = & c_m\frac{c'_{m-1}}{c_{m-1}}\frac{\Gamma\left(\frac{m-1}{2}\right)}{2\pi^{\frac{m-1}{2}}}\frac{2^{1-\frac{m}{2}}}{\Gamma\left(\frac{m}{2}+s\right)}\left(\frac{\|\vec y\|}{2}\right)^sK_s(\|\vec y\|)\\
		& = & \bigg(\frac{\Gamma(m)}{\pi^{\frac{m}{2}}\Gamma\left(\frac{m}{2}\right)}\bigg)^{\frac{1}{2}}\frac{2^{1-\frac{m}{2}}}{\Gamma\left(\frac{m}{2}+s\right)}\left(\frac{\|\vec y\|}{2}\right)^sK_s(\|\vec y\|)
	\end{eqnarray*}
	for $\vec y\in\R^m\setminus\{\vec 0\}$, where we in the last line used that $\frac{c'_{m-1}}{c_{m-1}}=\frac{2\pi^{\frac{m-1}{2}}}{\Gamma(\frac{m-1}{2})}$ (the surface area of $S^{m-2}$) together with $c_m=\big(\frac{\Gamma(m)}{\pi^{\frac{m}{2}}\Gamma(\frac{m}{2})}\big)^{\frac{1}{2}}$. Redoing this last calculation for $m=1$, using $h_s(x_1)=(x_1^2+1)^{-s-\frac{1}{2}}$, we get precisely the same expression as substituting $m=1$ in the previous expression for $\hat f_s$.
	
	Since
	\begin{equation*}
		\lim_{\sigma\to 0^+}\|\hat f_{\sigma+it}-\hat f_{it}\|_2=0
	\end{equation*}
	we have
	\begin{equation*}
		\lim_{n\to \infty}\|\hat f_{\frac{1}{n}+it}-\hat f_{it}\|_2=0,
	\end{equation*}
	which enables us to find a subsequence which converges point-wise, that is,
	\begin{equation*}
		\lim_{k\to \infty}|\hat f_{\frac{1}{n_k}+it}(\vec y)-\hat f_{it}(\vec y)|=0
	\end{equation*}
	for almost all $\vec y\in\R^m$. From our calculation of $\hat f_{\sigma+it}$ for $\sigma>0$ we conclude that
	\begin{equation*}
		\hat f_{it}(\vec y)=\bigg(\frac{\Gamma(m)}{\pi^{\frac{m}{2}}\Gamma\left(\frac{m}{2}\right)}\bigg)^{\frac{1}{2}}\frac{2^{1-\frac{m}{2}}}{\Gamma\left(\frac{m}{2}+it\right)}\left(\frac{\|\vec y\|}{2}\right)^{it}K_{it}(\|\vec y\|)
	\end{equation*}
	for almost all $\vec y\in\R^m$, which finishes the proof.
\end{proof}
\begin{proposition}
	\label{rep2effect}
	For $t\in\R$ and $f\in \ELL^2(\R^m)$
	\begin{equation*}
		(\hat\pi_{it}(a_r)f)(\vec y)=(e^r)^{-it+\frac{m}{2}}f(e^r\vec y)\qquad(\vec y\in\R^m)
	\end{equation*}
	for $r\in\R$, and
	\begin{equation*}
		(\hat\pi_{it}(n_{\vec x})f)(\vec y)=e^{-i\ip{\vec x}{\vec y}}f(\vec y)\qquad(\vec y\in\R^m)
	\end{equation*}
	for $\vec x\in\R^m$.
\end{proposition}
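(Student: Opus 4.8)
The plan is to read off from Proposition~\ref{rep1effect} the operator form of $\pi_{it}$ on $\IA$ and $\IN$, and then conjugate by $\cF$ using the elementary covariance of the Fourier--Plancherel transform under dilations and translations. Concretely, Proposition~\ref{rep1effect} says that, as operators on $\ELL^2(\R^m)$,
\begin{equation*}
	\pi_{it}(a_r)=e^{-\left(it+\frac m2\right)r}\,\delta_{e^{-r}},\qquad \pi_{it}(n_{\vec y})=\tau_{\vec y},
\end{equation*}
where $\delta_a$ denotes the dilation $(\delta_a f)(\vec x)=f(a\vec x)$ (for $a>0$) and $\tau_{\vec y}$ the translation $(\tau_{\vec y}f)(\vec x)=f(\vec x-\vec y)$. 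Since $\hat\pi_{it}(g)=\cF\pi_{it}(g)\cF^*$, it suffices to compute $\cF\delta_{e^{-r}}\cF^*$ and $\cF\tau_{\vec x}\cF^*$.

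Next I would recall (and, if desired, fix the normalisation by writing $(\cF f)(\vec y)=(2\pi)^{-\frac m2}\int_{\R^m}f(\vec x)e^{-i\ip{\vec x}{\vec y}}\dd\vec x$) the two standard identities: a change of variables gives $\cF\delta_a=a^{-m}\,\delta_{1/a}\cF$ for every $a>0$, and $\cF\tau_{\vec y}\cF^*$ is multiplication by the function $e^{-i\ip{\vec y}{\cdot}}$. Both hold on $L^1(\R^m)\cap\ELL^2(\R^m)$ by the usual integral manipulation and then extend to all of $\ELL^2(\R^m)$ by density and the continuity (unitarity) of $\cF$.

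Substituting, for the abelian part one gets
\begin{equation*}
	\hat\pi_{it}(a_r)=\cF\pi_{it}(a_r)\cF^*=e^{-\left(it+\frac m2\right)r}\,\cF\delta_{e^{-r}}\cF^*=e^{-\left(it+\frac m2\right)r}\,e^{rm}\,\delta_{e^{r}}=e^{\left(-it+\frac m2\right)r}\,\delta_{e^{r}},
\end{equation*}
which is exactly $(\hat\pi_{it}(a_r)f)(\vec y)=(e^r)^{-it+\frac m2}f(e^r\vec y)$; and for the nilpotent part $\hat\pi_{it}(n_{\vec x})=\cF\tau_{\vec x}\cF^*$ is multiplication by $e^{-i\ip{\vec x}{\cdot}}$, i.e.\ $(\hat\pi_{it}(n_{\vec x})f)(\vec y)=e^{-i\ip{\vec x}{\vec y}}f(\vec y)$, as claimed.

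There is no genuine obstacle; the computation is routine. The only point requiring care is the bookkeeping of the factor $e^{rm}$: it arises from the Jacobian $|{\det}|=e^{rm}$ of the dilation and must combine correctly with the scalar prefactor $e^{-(it+\frac m2)r}$ coming from Proposition~\ref{rep1effect} to produce the exponent $-it+\frac m2$. I would therefore make the normalisation of $\cF$ explicit before doing the substitution, so that the signs and powers match the conventions used elsewhere in the section.
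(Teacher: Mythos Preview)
Your proposal is correct and follows essentially the same approach as the paper: both start from the explicit action of $\pi_{it}$ on $\IA$ and $\IN$ given by Proposition~\ref{rep1effect} and then invoke the standard behaviour of the Fourier--Plancherel transform under dilations and translations. The paper phrases the computation by applying $\cF$ to $\pi_{it}(g)f$ and writing the result in terms of $\hat f$, whereas you introduce the dilation and translation operators $\delta_a,\tau_{\vec y}$ and conjugate by $\cF$; this is purely a notational difference, and your bookkeeping of the Jacobian factor $e^{rm}$ matches the paper's outcome.
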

\begin{proof}
	With the inversion formula in our minds, we verify the formulas on $\hat f$ instead of $f$. Using Proposition~\ref{rep1effect} and standard results on the Fourier--Plancherel transform, we find for $\hat f\in \ELL^2(\R^m)$
	\begin{equation*}
		(\hat\pi_{it}(a_r)\hat f)(\vec y)=(\pi_{it}(a_r)f)\hat{\phantom{f}}(\vec y)=(e^{-r})^{it-\frac{m}{2}}\hat f(e^r\vec y)\qquad(\vec y\in\R^m)
	\end{equation*}
	for $r\in\R$. Similarly for $\hat f\in \ELL^2(\R^m)$
	\begin{equation*}
		(\hat\pi_{it}(n_{\vec x})\hat f)(\vec y)=(\pi_{it}(n_{\vec x})f)\hat{\phantom{f}}(\vec y)=e^{-i\ip{\vec x}{\vec y}}\hat f(\vec y)\qquad(\vec y\in\R^m)
	\end{equation*}
	for $\vec x\in\R^m$.
\end{proof}
\begin{lemma}
	\label{tildeunitaries}
	For $t\in\R$
	\begin{equation*}
		\tilde U_{it}:\ELL^2(\R^m)\to \ELL^2(\R^m)
	\end{equation*}
	given by
	\begin{equation*}
		(\tilde U_{it}f)(\vec x)=2^{it}\|\vec x\|^{-it}f(\vec x)\qquad(\vec x\in\R^m)
	\end{equation*}
	for $f\in \ELL^2(\R^m)$, is unitary. Furthermore, we have
	\begin{equation*}
		(\tilde U_{it}^*f)(\vec x)=2^{-it}\|\vec x\|^{it}f(\vec x)\qquad(\vec x\in\R^m)
	\end{equation*}
	for $f\in \ELL^2(\R^m)$.
\end{lemma}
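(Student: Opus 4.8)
The plan is to recognize $\tilde U_{it}$ as nothing more than the operator of multiplication by the function $\vec x\mapsto 2^{it}\|\vec x\|^{-it}$ on $\R^m\setminus\{\vec 0\}$, and to observe that this function is measurable with constant modulus $1$; the standard facts about multiplication operators then yield the claim, but since the statement is entirely elementary I would simply verify everything by hand. First I would note that for $t\in\R$ and $\vec x\in\R^m\setminus\{\vec 0\}$ one has $2^{it}=e^{it\ln 2}$ and $\|\vec x\|^{-it}=e^{-it\ln\|\vec x\|}$, so $|2^{it}\|\vec x\|^{-it}|=1$. Consequently, for any $f\in\ELL^2(\R^m)$ the function $\tilde U_{it}f$ satisfies $|(\tilde U_{it}f)(\vec x)|=|f(\vec x)|$ for almost every $\vec x$ (the origin being a Lebesgue null set), so $\tilde U_{it}f\in\ELL^2(\R^m)$ with $\|\tilde U_{it}f\|_2=\|f\|_2$. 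Thus $\tilde U_{it}$ is a well-defined linear isometry.

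Next I would check that the formula stated for $\tilde U_{it}^\adjoint$ gives a two-sided inverse. Writing $V_{it}f(\vec x)=2^{-it}\|\vec x\|^{it}f(\vec x)$, the same modulus-$1$ remark shows $V_{it}$ is an isometry of $\ELL^2(\R^m)$, and since $(2^{it}\|\vec x\|^{-it})(2^{-it}\|\vec x\|^{it})=1$ for a.e.\ $\vec x$, one gets $\tilde U_{it}V_{it}=V_{it}\tilde U_{it}=\mathrm{id}$. Hence $\tilde U_{it}$ is surjective, so it is unitary with $\tilde U_{it}^{-1}=V_{it}$.

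Finally, to see that $V_{it}$ is in fact the \emph{adjoint} and not merely the inverse, I would compute, for $f,h\in\ELL^2(\R^m)$,
\[
\ip{\tilde U_{it}f}{h}=\int_{\R^m}2^{it}\|\vec x\|^{-it}f(\vec x)\overline{h(\vec x)}\,\dd\vec x=\int_{\R^m}f(\vec x)\overline{2^{-it}\|\vec x\|^{it}h(\vec x)}\,\dd\vec x=\ip{f}{V_{it}h},
\]
using $\overline{2^{it}\|\vec x\|^{-it}}=2^{-it}\|\vec x\|^{it}$; this gives $\tilde U_{it}^\adjoint=V_{it}$, exactly as claimed. There is no genuine obstacle here: the only point deserving (minimal) care is that the multiplier is defined merely off the null set $\{\vec 0\}$ and is honestly bounded, which is precisely what makes the multiplication operator bounded and its adjoint the obvious conjugate multiplier.
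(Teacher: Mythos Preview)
Your proof is correct and matches the paper's approach: the paper simply says ``This is obvious,'' and you have spelled out exactly the obvious verification that multiplication by the unimodular function $\vec x\mapsto 2^{it}\|\vec x\|^{-it}$ is a unitary with adjoint given by the conjugate multiplier.
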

\begin{proof}
	This is obvious.
\end{proof}
\begin{proposition}
	\label{rep3}
	For $t\in\R$
	\begin{equation*}
		\varphi_{it}(g)=\ip{\tilde\pi_{it}(g) \tilde f_{it}}{\tilde f_{it}}_{\ELL^2(\R^m)}\qquad (g\in\G),
	\end{equation*}
	where $(\tilde\pi_{it},\ELL^2(\R^m))$ is the strongly continuous, irreducible, unitary representation of $\G$ given by
	\begin{equation*}
		\tilde\pi_{it}(g)=\tilde U_{it}\hat\pi_{it}(g)\tilde U_{it}^*\qquad(g\in\G),
	\end{equation*}
	and where $\tilde f_{it}$ is the $\K$-invariant norm $1$ vector in $\ELL^2(\R^m)$ given by
	\begin{equation*}
		\tilde f_{it}=\tilde U_{it} \hat f_{it}.
	\end{equation*}
	More specifically,
	\begin{equation*}
		\tilde f_{it}(\vec x)=	\bigg(\frac{\Gamma(m)}{\pi^{\frac{m}{2}}\Gamma\left(\frac{m}{2}\right)}\bigg)^{\frac{1}{2}}\frac{2^{1-\frac{m}{2}}}{\Gamma\left(\frac{m}{2}+it\right)}K_{it}(\|\vec x\|)\qquad(\vec x\in\R^m\setminus\{\vec 0\}).
	\end{equation*}
\end{proposition}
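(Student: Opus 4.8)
The plan is to derive Proposition~\ref{rep3} from Proposition~\ref{rep2} by transporting the whole picture through the unitary $\tilde U_{it}$ of Lemma~\ref{tildeunitaries}, in exactly the way that Proposition~\ref{rep2} was obtained from Proposition~\ref{rep1} via the Fourier--Plancherel transform $\cF$. Since $\tilde U_{it}$ is unitary, $\tilde\pi_{it}(g)=\tilde U_{it}\hat\pi_{it}(g)\tilde U_{it}^{\adjoint}$ is unitarily equivalent to $\hat\pi_{it}$, hence again a strongly continuous, irreducible, unitary representation of $\G$; likewise $\tilde f_{it}=\tilde U_{it}\hat f_{it}$ is a norm-$1$ vector because $\hat f_{it}$ is.

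For the coefficient formula, one uses $\tilde U_{it}^{\adjoint}\tilde U_{it}=\mathrm{id}$ together with the unitarity of $\tilde U_{it}$ to get, for $g\in\G$,
\[
\ip{\tilde\pi_{it}(g)\tilde f_{it}}{\tilde f_{it}}_{\ELL^2(\R^m)}=\ip{\hat\pi_{it}(g)\hat f_{it}}{\hat f_{it}}_{\ELL^2(\R^m)}=\varphi_{it}(g),
\]
the last equality being Proposition~\ref{rep2}. The $\K$-invariance of $\tilde f_{it}$ for $\tilde\pi_{it}$ is just as quick: for $k\in\K$ the same cancellation gives $\tilde\pi_{it}(k)\tilde f_{it}=\tilde U_{it}\hat\pi_{it}(k)\hat f_{it}=\tilde U_{it}\hat f_{it}=\tilde f_{it}$, using that $\hat f_{it}$ is $\K$-invariant for $\hat\pi_{it}$.

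The one genuine computation is the explicit formula for $\tilde f_{it}$. Substituting the expression for $\hat f_{it}$ from Proposition~\ref{rep2} into the definition of $\tilde U_{it}$ from Lemma~\ref{tildeunitaries} gives, for $\vec x\in\R^m\setminus\{\vec 0\}$,
\[
\tilde f_{it}(\vec x)=2^{it}\|\vec x\|^{-it}\bigg(\frac{\Gamma(m)}{\pi^{\frac{m}{2}}\Gamma\left(\frac{m}{2}\right)}\bigg)^{\frac{1}{2}}\frac{2^{1-\frac{m}{2}}}{\Gamma\left(\frac{m}{2}+it\right)}\left(\frac{\|\vec x\|}{2}\right)^{it}K_{it}(\|\vec x\|),
\]
and the prefactor $2^{it}\|\vec x\|^{-it}\left(\|\vec x\|/2\right)^{it}$ collapses to $1$, yielding the claimed formula (the excluded point $\vec 0$ is a null set, so this determines $\tilde f_{it}$ as an element of $\ELL^2(\R^m)$). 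I expect no real obstacle here: the whole proposition is bookkeeping on top of Proposition~\ref{rep2} and the trivial Lemma~\ref{tildeunitaries}, and the only thing to watch is keeping track of the powers of $2$ and of $\|\vec x\|$.
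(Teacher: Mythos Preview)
Your proof is correct and follows exactly the same approach as the paper, which simply states that the proposition ``follows directly from Lemma~\ref{tildeunitaries} and Proposition~\ref{rep2}.'' You have merely spelled out the details that the paper leaves implicit.
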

\begin{proof}
	This follows directly from Lemma~\ref{tildeunitaries} and Proposition~\ref{rep2}.
\end{proof}
\begin{proposition}
	\label{rep3effect}
	For $t\in\R$ and $f\in \ELL^2(\R^m)$
	\begin{equation*}
		(\tilde\pi_{it}(a_r)f)(\vec x)=e^{\frac{m}{2}r}f(e^r\vec x)\qquad(\vec x\in\R^m)
	\end{equation*}
	for $r\in\R$, and
	\begin{equation*}
		(\tilde\pi_{it}(n_{\vec y})f)(\vec x)=e^{-i\ip{\vec y}{\vec x}}f(\vec x)\qquad(\vec x\in\R^m)
	\end{equation*}
	for $\vec y\in\R^m$.
\end{proposition}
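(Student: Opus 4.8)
The statement is obtained by a direct conjugation computation, simply composing the three explicit formulas already in hand. By Proposition~\ref{rep3}, $\tilde\pi_{it}(g)=\tilde U_{it}\hat\pi_{it}(g)\tilde U_{it}^{*}$, so the plan is: first apply $\tilde U_{it}^{*}$ to $f$ using the formula in Lemma~\ref{tildeunitaries}, then apply $\hat\pi_{it}(g)$ using Proposition~\ref{rep2effect}, then apply $\tilde U_{it}$ using Lemma~\ref{tildeunitaries} again, and finally collect the scalar factors.

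For $g=a_r$ the key point is the bookkeeping of the factor $\|\cdot\|^{\pm it}$. Writing $g_{1}=\tilde U_{it}^{*}f$, we have $g_{1}(\vec y)=2^{-it}\|\vec y\|^{it}f(\vec y)$; applying $\hat\pi_{it}(a_r)$ gives $g_{2}(\vec y)=(e^{r})^{-it+\frac{m}{2}}g_{1}(e^{r}\vec y)=(e^{r})^{-it+\frac{m}{2}}2^{-it}(e^{r})^{it}\|\vec y\|^{it}f(e^{r}\vec y)=(e^{r})^{\frac{m}{2}}2^{-it}\|\vec y\|^{it}f(e^{r}\vec y)$, so the dilation contributes exactly the $(e^{r})^{it}$ needed to kill the $(e^{r})^{-it}$ in the definition of $\hat\pi_{it}(a_r)$; applying $\tilde U_{it}$ then multiplies by $2^{it}\|\vec x\|^{-it}$, which cancels the remaining $2^{-it}\|\vec x\|^{it}$, leaving $(\tilde\pi_{it}(a_r)f)(\vec x)=e^{\frac{m}{2}r}f(e^{r}\vec x)$, as claimed.

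For $g=n_{\vec y}$ the computation is even shorter: by Proposition~\ref{rep2effect} the operator $\hat\pi_{it}(n_{\vec y})$ is multiplication by the function $\vec x\mapsto e^{-i\ip{\vec y}{\vec x}}$, and $\tilde U_{it}$, $\tilde U_{it}^{*}$ are also multiplication operators by Lemma~\ref{tildeunitaries}. Since multiplication operators commute, $\tilde U_{it}\hat\pi_{it}(n_{\vec y})\tilde U_{it}^{*}=\tilde U_{it}\tilde U_{it}^{*}\hat\pi_{it}(n_{\vec y})=\hat\pi_{it}(n_{\vec y})$, which gives $(\tilde\pi_{it}(n_{\vec y})f)(\vec x)=e^{-i\ip{\vec y}{\vec x}}f(\vec x)$.

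There is no real obstacle here; the only thing to watch is the sign convention in the exponent of $e^{r}$ (dilation by $e^{r}$ versus $e^{-r}$) and whether the Euclidean norm is evaluated at $\vec x$ or at $e^{r}\vec x$ at each stage, which is precisely what makes the $\|\cdot\|^{it}$ and $(e^{r})^{it}$ factors conspire to cancel. One could equally well phrase the verification on the level of the generators only, since $\IN\IA$ is generated by the $a_{r}$ and the $n_{\vec y}$, but here it is cleaner to just exhibit the two formulas directly.
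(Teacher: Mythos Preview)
Your proof is correct and follows essentially the same route as the paper: a direct conjugation computation using the explicit formulas from Proposition~\ref{rep2effect} and Lemma~\ref{tildeunitaries}. The only cosmetic difference is that the paper writes $f=\tilde U_{it}\hat f$ and tracks $\hat f$ through the composition, whereas you apply $\tilde U_{it}^{*}$, $\hat\pi_{it}(g)$, $\tilde U_{it}$ in order to a generic $f$; your observation that the $n_{\vec y}$ case reduces to commuting multiplication operators is a mild streamlining of the paper's line-by-line verification.
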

\begin{proof}
	For $t\in\R$ we verify the formulas on $\tilde f=\widetilde U_{it} \hat f$ instead of $f$. Using Proposition~\ref{rep2effect}, we find for $\tilde f\in \ELL^2(\R^m)$
	\begin{equation*}
		(\tilde\pi_{it}(a_r)\tilde f)(\vec x)=(\widetilde U_{it}\hat\pi_{it}(a_r)\hat f)(\vec x)=2^{it}\|\vec x\|^{-it}(e^{r})^{-it+\frac{m}{2}}\hat f(e^r\vec x)=e^{\frac{m}{2}r}\tilde f(e^r\vec x)
	\end{equation*}
	for $\vec x\in\R^m$ and $r\in\R$. Similarly for $\tilde f\in \ELL^2(\R^m)$
	\begin{equation*}
		(\tilde\pi_{it}(n_{\vec y})\tilde f)(\vec x)=(\widetilde U_{it}\hat\pi_{it}(n_{\vec y})\hat f)(\vec x)=2^{it}\|\vec x\|^{-it}e^{-i\ip{\vec y}{\vec x}}\hat f(\vec x)=e^{-i\ip{\vec y}{\vec x}}\tilde f(\vec x)
	\end{equation*}
	for $\vec x\in\R^m$ and $\vec y\in\R^m$.
\end{proof}
We have now arrived at a formulation, where the representation does not depend on $t\in\R$, as long as we only look at elements from $\IN\IA$, which we formulate in the following corollary.
\begin{corollary}
	For $t\in\R$
	\begin{equation*}
		\tilde\pi_{it}|_{\IN\IA}=\tilde\pi_0|_{\IN\IA},
	\end{equation*}
	why we shall henceforth refer to this restriction as just $\tilde\pi$. It follows that $(\tilde\pi,\ELL^2(\R^m))$ is a strongly continuous, unitary representation of $\IN\IA$.
\end{corollary}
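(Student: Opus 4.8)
The plan is to deduce this corollary essentially immediately from Propositions~\ref{rep3} and~\ref{rep3effect}. First I would observe that the two formulas recorded in Proposition~\ref{rep3effect}, for $(\tilde\pi_{it}(a_r)f)(\vec x)$ and $(\tilde\pi_{it}(n_{\vec y})f)(\vec x)$, contain no occurrence of the parameter $t$: the action of $a_r$ is the dilation-with-weight $f\mapsto e^{\frac{m}{2}r}f(e^r\,\cdot\,)$ and the action of $n_{\vec y}$ is multiplication by the character $\vec x\mapsto e^{-i\ip{\vec y}{\vec x}}$, regardless of $t$. Consequently, for each fixed $r\in\R$ and $\vec y\in\R^m$ the operators $\tilde\pi_{it}(a_r)$ and $\tilde\pi_{it}(n_{\vec y})$ on $\ELL^2(\R^m)$ are independent of $t$; in particular they equal $\tilde\pi_0(a_r)$ and $\tilde\pi_0(n_{\vec y})$, respectively.

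Next I would propagate this from the generators to all of $\IN\IA$. Recall from the explicit description of $\IN$ and $\IA$ at the beginning of the section that every element of the subgroup $\IN\IA\subseteq\G$ can be written as a product $n_{\vec x}a_r$ with $\vec x\in\R^m$ and $r\in\R$ (the solvable factor of the Iwasawa decomposition, with $\IN$ normal). Since $\tilde\pi_{it}$ is a group homomorphism, for such $g=n_{\vec x}a_r$ one gets $\tilde\pi_{it}(g)=\tilde\pi_{it}(n_{\vec x})\tilde\pi_{it}(a_r)$, and both factors on the right are independent of $t$ by the previous step. Hence $\tilde\pi_{it}(g)$ does not depend on $t$ for any $g\in\IN\IA$, i.e.\ $\tilde\pi_{it}|_{\IN\IA}=\tilde\pi_0|_{\IN\IA}$ for all $t\in\R$, so that writing $\tilde\pi$ for this common restriction is unambiguous.

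Finally, for the closing assertion I would simply restrict: by Proposition~\ref{rep3} each $(\tilde\pi_{it},\ELL^2(\R^m))$ is a strongly continuous unitary representation of $\G$, hence its restriction to the closed subgroup $\IN\IA$ is a strongly continuous unitary representation of $\IN\IA$, and taking $t=0$ identifies this restriction with $\tilde\pi$. There is no genuine obstacle to overcome here — all the substance already lies in Propositions~\ref{rep3} and~\ref{rep3effect}; the only point deserving a moment's attention is that the elements $a_r$ and $n_{\vec x}$ do generate $\IN\IA$ under multiplication, so that $t$-independence on these generators forces $t$-independence on the whole subgroup, and this is immediate from the explicit parametrisations of $\IA$ and $\IN$ fixed earlier in the section.
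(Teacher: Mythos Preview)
Your proposal is correct and matches the paper's intended reasoning: the paper gives no proof for this corollary, treating it as immediate from Proposition~\ref{rep3effect} (whose formulas visibly contain no $t$) together with Proposition~\ref{rep3}. Your only addition is the explicit remark that $\IN$ and $\IA$ generate $\IN\IA$, which is harmless and indeed the obvious way to pass from the generators $a_r$, $n_{\vec y}$ to the whole subgroup.
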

\begin{proposition}
	\label{analytic}
	For $s\in\bS$
	\begin{equation*}
		\varphi_s(q)=\ip{\tilde\pi(q)\tilde f_s}{\tilde f_{-\bar s}}\qquad(q\in \IN\IA),
	\end{equation*}
	where
	\begin{equation*}
		\tilde f_s(\vec x)=\bigg(\frac{\Gamma(m)}{\pi^{\frac{m}{2}}\Gamma\left(\frac{m}{2}\right)}\bigg)^{\frac{1}{2}}\frac{2^{1-\frac{m}{2}}}{\Gamma\left(\frac{m}{2}+s\right)}K_s(\|\vec x\|)\qquad(\vec x\in\R^m\setminus\{\vec 0\})
	\end{equation*}
	is an element in $\ELL^2(\R^m)$, with
	\begin{equation*}
		\|\tilde f_s\|_2^2= \frac{\Gamma\left(\frac{m}{2}+\sigma\right)\Gamma\left(\frac{m}{2}-\sigma\right)\Gamma\left(\frac{m}{2}+it\right)\Gamma\left(\frac{m}{2}-it\right)}{\Gamma\left(\frac{m}{2}\right)\Gamma\left(\frac{m}{2}\right)\Gamma\left(\frac{m}{2}+s\right)\Gamma\left(\frac{m}{2}+\bar s\right)}.
	\end{equation*}
\end{proposition}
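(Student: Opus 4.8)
The plan is to treat this as an analytic-continuation statement: the identity holds on the imaginary axis by Proposition~\ref{rep3} and the corollary preceding it, both sides are holomorphic in $s$ on the connected strip $\bS$, so they agree on all of $\bS$ by the identity theorem. The first task is to make sense of $\tilde f_s$ and to compute its norm. Writing $s=\sigma+it$, passing to polar coordinates in $\R^m$ and using $\overline{K_s(r)}=K_{\bar s}(r)$ for $r>0$ one gets
\[
\|\tilde f_s\|_2^2=\frac{\Gamma(m)}{\pi^{m/2}\Gamma(\tfrac m2)}\,2^{2-m}\,\frac{1}{\Gamma(\tfrac m2+s)\Gamma(\tfrac m2+\bar s)}\cdot\frac{2\pi^{m/2}}{\Gamma(\tfrac m2)}\int_0^\infty K_s(r)K_{\bar s}(r)\,r^{m-1}\,\dd r,
\]
where $2\pi^{m/2}/\Gamma(\tfrac m2)$ is the surface area of $S^{m-1}$. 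I would then insert the classical evaluation
\[
\int_0^\infty K_\mu(r)K_\nu(r)\,r^{\lambda-1}\,\dd r=\frac{2^{\lambda-3}}{\Gamma(\lambda)}\,\Gamma\Big(\tfrac{\lambda+\mu+\nu}{2}\Big)\Gamma\Big(\tfrac{\lambda+\mu-\nu}{2}\Big)\Gamma\Big(\tfrac{\lambda-\mu+\nu}{2}\Big)\Gamma\Big(\tfrac{\lambda-\mu-\nu}{2}\Big),
\]
valid for $\Re(\lambda)>|\Re(\mu)|+|\Re(\nu)|$, with $\lambda=m$, $\mu=s$, $\nu=\bar s$. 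The convergence condition becomes $m>2|\sigma|$, that is, precisely $s\in\bS$; and since $s+\bar s=2\sigma$, $s-\bar s=2it$, cancelling the powers of $2$, of $\pi^{m/2}$ and of $\Gamma(m)$ gives exactly the asserted formula for $\|\tilde f_s\|_2^2$, which is finite and strictly positive on $\bS$. Hence $\tilde f_s\in\ELL^2(\R^m)$ for $s\in\bS$.

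Next I would record the holomorphy statements. Using $\overline{\Gamma(\tfrac m2-\bar s)}=\Gamma(\tfrac m2-s)$ and $\overline{K_{-\bar s}(r)}=K_{-s}(r)=K_s(r)$ for $r>0$, the complex conjugate of the partner vector simplifies to
\[
\overline{\tilde f_{-\bar s}(\vec x)}=\Big(\tfrac{\Gamma(m)}{\pi^{m/2}\Gamma(m/2)}\Big)^{1/2}\frac{2^{1-m/2}}{\Gamma(\tfrac m2-s)}K_s(\|\vec x\|)=:h_s(\vec x),
\]
which lies in $\ELL^2(\R^m)$ on $\bS$ for the same reason as $\tilde f_s$ (since $\tfrac m2-s$ has real part $-\sigma$). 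Both $s\mapsto\tilde f_s$ and $s\mapsto h_s$ are holomorphic maps $\bS\to\ELL^2(\R^m)$: for $g\in\ELL^2(\R^m)$ supported in a compact subset of $\R^m\setminus\{\vec 0\}$ the scalar functions $\ip{\tilde f_s}{g}$ and $\ip{h_s}{g}$ are holomorphic by differentiation under the integral sign (using that $s\mapsto K_s(r)$ is analytic and $s\mapsto1/\Gamma(\tfrac m2\pm s)$ is entire); such $g$ are dense; and by the norm formula just obtained $\|\tilde f_s\|_2=\|h_s\|_2$ is locally bounded on $\bS$. Weak holomorphy together with local boundedness yields holomorphy in norm.

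To finish, note that $\ip{\tilde\pi(q)\tilde f_s}{\tilde f_{-\bar s}}=\int_{\R^m}(\tilde\pi(q)\tilde f_s)(\vec x)\,h_s(\vec x)\,\dd\vec x$, a bounded bilinear pairing of $\tilde\pi(q)\tilde f_s$ with $h_s$. Since $\tilde\pi(q)$ is a fixed bounded operator, $s\mapsto\tilde\pi(q)\tilde f_s$ is holomorphic, and applying a bounded bilinear map to two holomorphic Hilbert-space valued functions gives a holomorphic scalar function; thus $s\mapsto\ip{\tilde\pi(q)\tilde f_s}{\tilde f_{-\bar s}}$ is holomorphic on $\bS$ for each $q\in\IN\IA$. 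On the imaginary axis $s=it$ one has $-\bar s=it$, hence $\tilde f_{-\bar s}=\tilde f_{it}$, and by the corollary preceding the proposition $\tilde\pi(q)=\tilde\pi_{it}(q)$; therefore this coefficient equals $\ip{\tilde\pi_{it}(q)\tilde f_{it}}{\tilde f_{it}}=\varphi_{it}(q)$ by Proposition~\ref{rep3}. As $s\mapsto\varphi_s(q)$ is holomorphic on $\bS$ (a listed property of the spherical functions) and the two holomorphic functions agree on $i\R$, which has accumulation points in $\bS$, the identity theorem gives $\varphi_s(q)=\ip{\tilde\pi(q)\tilde f_s}{\tilde f_{-\bar s}}$ for all $q\in\IN\IA$ and $s\in\bS$.

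The only genuinely delicate point is the norm computation: locating and correctly applying the product formula for two modified Bessel functions of the second kind, matching its region of convergence to the strip $\bS$, and carrying out the routine but error-prone cancellation of gamma-factors and powers of $2$. The holomorphy and identity-theorem steps are standard vector-valued complex analysis.
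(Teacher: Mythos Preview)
Your proof is correct and follows the same overall strategy as the paper: verify the identity on the imaginary axis via Proposition~\ref{rep3}, show the right-hand side is holomorphic in $s$ on $\bS$, invoke the listed analyticity of $s\mapsto\varphi_s(g)$, and conclude by the identity theorem; the norm is computed with the same Bessel product formula (the paper's~\eqref{bigthing}).

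The only difference is in how holomorphy of the right-hand side is established. The paper writes out the scalar integral~\eqref{extension} explicitly, then uses Morera together with Fubini and the continuity lemma, producing the dominating function $K_a(e^r\|\vec x\|)K_a(\|\vec x\|)$ on substrips $\abS$ from the bound $|K_s(r)|\le K_{|\Re(s)|}(r)$. You instead package the argument at the level of $L^2$-valued maps: first show $s\mapsto\tilde f_s$ and $s\mapsto h_s=\overline{\tilde f_{-\bar s}}$ are holomorphic into $L^2(\R^m)$ (weak holomorphy on a dense set plus local boundedness from the norm formula), and then compose with the bounded bilinear pairing. Your route is a bit cleaner conceptually---it avoids rewriting the coefficient as an explicit integral in $\vec x$---while the paper's route makes the domination step fully explicit. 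Both rest on the same analytic inputs ($|K_s(r)|\le K_{|\Re(s)|}(r)$ and the product integral), so this is a presentational rather than a substantive difference.
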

\begin{proof}
	We start by finding $\varphi_{it}(a_r n_{\vec y})$ for arbitrary $r\in\R$ and $\vec y\in\R^m$. According to Proposition~\ref{rep3} we have
	\begin{equation*}
		\varphi_{it}(a_r n_{\vec y})=\ip{\tilde\pi_{it}(a_r n_{\vec y})\tilde f_{it}}{\tilde f_{it}},
	\end{equation*}
	where according to Proposition~\ref{rep3effect}
	\begin{equation*}
		(\tilde\pi_{it}(a_r n_{\vec y})\tilde f_{it})(\vec x)=e^{\frac{m}{2}r}(\tilde\pi_{it}(n_{\vec y})\tilde f_{it})(e^r\vec x)=e^{\frac{m}{2}r}e^{-i\ip{\vec y}{e^r\vec x}}\tilde f_{it}(e^r\vec x)
	\end{equation*}
	for $\vec x\in\R^m$. Using the specific form of $\tilde f_{it}$ from Proposition~\ref{rep3} we find
	\begin{equation}
		\label{extension}
		\varphi_s(a_r n_{\vec y})=
		\frac{\pi^{-\frac{m}{2}}2^{2-m}e^{\frac{m}{2}r}\Gamma(m)}{\Gamma\left(\frac{m}{2}\right)\Gamma\left(\frac{m}{2}+s\right)\Gamma\left(\frac{m}{2}-s\right)}
		\int_{\R^m} K_{s}(e^r\|\vec x\|)K_{s}(\|\vec x\|)e^{-i e^r\ip{\vec y}{\vec x}}\dd \vec x
	\end{equation}
	for $s=it$, since $K_{-\nu}(z)=K_\nu(z)$, $\overline{K_{\nu}(z)}=K_{\bar\nu}(z)$, and $\overline{\Gamma(z)}=\Gamma(\bar z)$. As mentioned in the introduction $s\mapsto\varphi_s(g)$ is analytic for every $g\in\G$, and as such has at most one analytic continuation to $\bS$. We will now argue that the right hand side of~\eqref{extension} is in fact analytic as a functions of $s\in\bS$ (and therefore equal to $\varphi_s(a_r n_{\vec y})$ for all $s\in\bS$). Since the Gamma function is analytic, it is enough to show that the integral is analytic. Using Morera's theorem together with Cauchy's integral theorem (and an application of Fubini's theorem) one easily reduces the problem to showing continuity of the map
	\begin{equation*}
		s\mapsto\int_{\R^m} K_{s}(e^r\|\vec x\|)K_{s}(\|\vec x\|)e^{-i e^r\ip{\vec y}{\vec x}}\dd \vec x\qquad(s\in\bS).
	\end{equation*}
	We will show continuity of this map in the strips
	\begin{equation*}
		\abS=\abstrip\subset\bS\qquad(0<a<\frac{m}{2}),
	\end{equation*}
	which in turn will show continuity in the whole strip $\bS$. Continuity will follow from the continuity lemma (cf.~\cite[Theorem~11.4]{Sch:MeasuresIntegralsAndMartingales}) once we have demonstrated the existence of a dominating function, i.e., a positive function $g_a\in\ELL^1(\R^m)$ satisfying
	\begin{equation*}
		|K_{s}(e^r\|\vec x\|)K_{s}(\|\vec x\|)|\leq g_a(\vec x)\qquad(\vec x\in\R^m,\,s\in\abS).
	\end{equation*}
	We will show that
	\begin{equation*}
		g_a(\vec x)=K_{a}(e^r\|\vec x\|)K_{a}(\|\vec x\|)\qquad(\vec x\in\R^m)
	\end{equation*}
	does exactly that (for a fixed $0<a<\frac{m}{2}$).
	
	According to~\cite[(9.6.24)]{AS:HandbookOfMathematicalFunctionsWithFormulas;Graphs;AndMathematicalTables}
	\begin{equation}
		\label{Kint}
		K_\nu(z)=\int_0^\infty e^{-z\cosh(t)}\cosh(\nu t)\dd t\qquad(z>0,\,\nu\in\C).
	\end{equation}
	From~\eqref{Kint} it follows that
	\begin{equation*}
		\nu\mapsto K_\nu(z)\qquad(\nu>0)
	\end{equation*}
	is a (positive) increasing function for $z>0$. It also follows that
	\begin{equation*}
		|K_\nu(z)|\leq K_{\Re(\nu)}(z)\qquad(z>0,\,\nu\in\C)
	\end{equation*}
	since
	\begin{equation*}
		|\cosh(\nu t)|\leq\cosh(\Re(\nu t))=\cosh(\Re(\nu)t)\qquad(t>0,\,\nu\in\C).
	\end{equation*}
	Therefore, we conclude that
	\begin{equation*}
		|K_{s}(e^r\|\vec x\|)K_{s}(\|\vec x\|)|\leq K_{\Re(s)}(e^r\|\vec x\|)K_{\Re(s)}(\|\vec x\|)\leq K_{a}(e^r\|\vec x\|)K_{a}(\|\vec x\|)
	\end{equation*}
	for $s\in\abS$, from which it follows that $g_a$ is in fact a dominating function. To verify that $g_a\in\ELL^1(\R^m)$ it is enough (using the Cauchy--Schwarz inequality) to verify that $x\mapsto K_{a}(e^r\|\vec x\|)$ and $x\mapsto K_{a}(\|\vec x\|)$ both belong to $\ELL^2(\R^m)$, which is easily done using
	\begin{equation}
		\label{bigthing}
		2^{\rho+2}\Gamma(1-\rho)\int_0^\infty K_\nu(r)K_\mu(r)r^{-\rho}\dd r=
	\end{equation}
	\begin{equation*}
		\Gamma\Big(\frac{1+\nu+\mu-\rho}{2}\Big)\Gamma\Big(\frac{1+\nu-\mu-\rho}{2}\Big) \Gamma\Big(\frac{1-\nu+\mu-\rho}{2}\Big)\Gamma\Big(\frac{1-\nu-\mu-\rho}{2}\Big)
	\end{equation*}
	for $\Re(1\pm\nu\pm\mu-\rho)>0$ (cf.~\cite[\S~7.14~(36)]{EMOT:HigherTranscendentalFunctions.Vol.II}, where there is a typo in the domain requirements, which has been corrected here).
	
	Thus, we have shown that
	\begin{equation*}
		\varphi_s(q)=\ip{\tilde\pi(q)\tilde f_s}{\tilde f_{-\bar s}}\qquad(q\in \IN\IA,\,s\in\bS),
	\end{equation*}
	with $\tilde f_s$ as claimed in the proposition, and we are left with the task of finding the norm of $\tilde f_s$. Using~\eqref{bigthing} with $\nu=s=\sigma+it$, $\mu=\bar s=\sigma-it$, and $\rho=1-m$ we get
	\begin{eqnarray*}
		\|\tilde f_s\|_2^2
		& = & \frac{\Gamma(m)}{\pi^{\frac{m}{2}}\Gamma\left(\frac{m}{2}\right)}\frac{2^{2-m}}{\left|\Gamma\left(\frac{m}{2}+s\right)\right|^2}\int_{\R^m}K_s(\|\vec x\|)K_{\bar s}(\|\vec x\|)\dd \vec x\\
		& = & \frac{2^{3-m}\Gamma(m)}{\Gamma\left(\frac{m}{2}\right)^2\left|\Gamma\left(\frac{m}{2}+s\right)\right|^2}\int_0^\infty K_s(r)K_{\bar s}(r)r^{m-1}\dd r\\
		& = & \frac{\Gamma\left(\frac{m}{2}+\sigma\right)\Gamma\left(\frac{m}{2}+it\right)\Gamma\left(\frac{m}{2}-it\right)\Gamma\left(\frac{m}{2}-\sigma\right)}{\Gamma\left(\frac{m}{2}\right)^2\left|\Gamma\left(\frac{m}{2}+s\right)\right|^2}\\
		& = & \frac{\Gamma\left(\frac{m}{2}+\sigma\right)\Gamma\left(\frac{m}{2}-\sigma\right)\Gamma\left(\frac{m}{2}+it\right)\Gamma\left(\frac{m}{2}-it\right)}{\Gamma\left(\frac{m}{2}\right)\Gamma\left(\frac{m}{2}\right)\Gamma\left(\frac{m}{2}+s\right)\Gamma\left(\frac{m}{2}+\bar s\right)}.
	\end{eqnarray*}
	This finished the proof.
\end{proof}
\begin{maintheorem}
	\label{maintheorem1}
	Let $(\G,\K)$ be the Gelfand pair with $\G=\GG$ and $\K=\KK$ for $n\geq2$ and put $m=n-1$. The spherical functions $\varphi_s$ have completely bounded Fourier multiplier norm given by
	\begin{equation*}
		\|\varphi_s\|_{\MoA(G)}=\Gammaload\qquad(s\in\bS),
	\end{equation*}
	where $s=\sigma+it$, and
	\begin{equation*}
		\|\varphi_s\|_{\MoA(G)}=1\qquad(s=\pm\frac{m}{2}).
	\end{equation*}
\end{maintheorem}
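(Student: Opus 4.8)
The plan is to prove separately the two inequalities
\[
\|\varphi_s\|_{\MoA(\G)}\le\|\tilde f_s\|_2\,\|\tilde f_{-\bar s}\|_2
\qquad\text{and}\qquad
\|\varphi_s\|_{\MoA(\G)}\ge\|\tilde f_s\|_2\,\|\tilde f_{-\bar s}\|_2
\]
for $s=\sigma+it\in\bS$, and then to evaluate $\|\tilde f_s\|_2\,\|\tilde f_{-\bar s}\|_2$ in closed form by means of the norm formula of Proposition~\ref{analytic}. The boundary values $s=\pm\tfrac\h2$ require no work: there $\varphi_s=\unit$, so $M_{\varphi_s}$ is the identity operator on the group von Neumann algebra and $\|\varphi_s\|_{\MoA(\G)}=1$.

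For the upper bound I would combine Proposition~\ref{analytic} with the implication (iii)$\Rightarrow$(i) of Proposition~\ref{Gilbert0}. By the Iwasawa decomposition every $g\in\G$ has a unique expression $g=q(g)\kappa(g)$ with $q(g)\in\IN\IA$ and $\kappa(g)\in\K$, and $g\mapsto q(g)$ is continuous. Since $q(y)^{-1}q(x)\in\IN\IA$ and $y^{-1}x=\kappa(y)^{-1}\bigl(q(y)^{-1}q(x)\bigr)\kappa(x)$, the $\K$-bi-invariance of $\varphi_s$ gives $\varphi_s(y^{-1}x)=\varphi_s\bigl(q(y)^{-1}q(x)\bigr)$, which Proposition~\ref{analytic} and the unitarity of $\tilde\pi$ turn into $\ip{\tilde\pi(q(x))\tilde f_s}{\tilde\pi(q(y))\tilde f_{-\bar s}}$. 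Taking $P(x)=\tilde\pi(q(x))\tilde f_s$ and $Q(y)=\tilde\pi(q(y))\tilde f_{-\bar s}$ — which are bounded by $\|\tilde f_s\|_2$ and $\|\tilde f_{-\bar s}\|_2$ respectively, and continuous because $\tilde\pi$ is strongly continuous — Proposition~\ref{Gilbert0} yields the upper bound.

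For the lower bound I would restrict to the abelian normal subgroup $\IN\cong\R^\h$. Restricting $P$ and $Q$ in Proposition~\ref{Gilbert0}(iii) shows that restriction never increases the completely bounded Fourier multiplier norm, so $\|\varphi_s\|_{\MoA(\G)}\ge\|\varphi_s|_\IN\|_{\MoA(\IN)}$; and since $\IN$ is abelian, $\MoA(\IN)=\FSA(\IN)$ isometrically. By Proposition~\ref{analytic} and Proposition~\ref{rep3effect},
\[
\varphi_s(n_{\vec y})=\int_{\R^\h}e^{-i\ip{\vec y}{\vec x}}\,\tilde f_s(\vec x)\,\overline{\tilde f_{-\bar s}(\vec x)}\,\dd\vec x\qquad(\vec y\in\R^\h),
\]
so $\varphi_s|_\IN\in\FA(\IN)$ (as $\tilde f_s,\tilde f_{-\bar s}\in\ELL^2(\R^\h)$) and, after fixing compatible Haar measures, $\|\varphi_s|_\IN\|_{\MoA(\IN)}=\|\tilde f_s\,\overline{\tilde f_{-\bar s}}\|_{\ELL^1(\R^\h)}=\int_{\R^\h}|\tilde f_s(\vec x)|\,|\tilde f_{-\bar s}(\vec x)|\,\dd\vec x$. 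The key observation is that $|\tilde f_s|$ and $|\tilde f_{-\bar s}|$ are \emph{proportional}: by the explicit formula for $\tilde f_s$ in Proposition~\ref{analytic}, together with $K_{-\bar s}=K_{\bar s}$, $\overline{K_{\bar s}(r)}=K_s(r)$ for $r>0$, and $|\Gamma(\tfrac\h2-\bar s)|=|\Gamma(\tfrac\h2-s)|$, each of them equals a positive constant times $|K_s(\|\vec x\|)|$. Hence the Cauchy--Schwarz inequality $\int|\tilde f_s|\,|\tilde f_{-\bar s}|\le\|\tilde f_s\|_2\|\tilde f_{-\bar s}\|_2$ is in fact an equality, and the lower bound follows.

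Combining the two bounds gives $\|\varphi_s\|_{\MoA(\G)}=\|\tilde f_s\|_2\|\tilde f_{-\bar s}\|_2$, and a short computation from the formula of Proposition~\ref{analytic} for $\|\tilde f_s\|_2^2$ — applied to $s$ and to $-\bar s$, and using $\overline{\Gamma(z)}=\Gamma(\bar z)$ to rewrite $\Gamma(\tfrac\h2+s)\Gamma(\tfrac\h2+\bar s)\Gamma(\tfrac\h2-s)\Gamma(\tfrac\h2-\bar s)$ as $|\Gamma(\tfrac\h2+s)\Gamma(\tfrac\h2-s)|^2$ — yields
\[
\|\tilde f_s\|_2\|\tilde f_{-\bar s}\|_2=\Gammaload ,
\]
which completes the proof. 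I expect the lower bound to be the main obstacle: the crucial insight is that descending all the way to the abelian subgroup $\IN$ loses nothing, and this works precisely because the proportionality of $|\tilde f_s|$ and $|\tilde f_{-\bar s}|$ makes the Cauchy--Schwarz step underlying the upper bound already sharp on $\IN$; some care is also needed with the Haar-measure normalisations in the identification $\MoA(\IN)=\FSA(\IN)$.
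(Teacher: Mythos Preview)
Your proposal is correct and follows essentially the same route as the paper: both obtain the lower bound by restricting to the abelian subgroup $\IN$ and computing $\|\varphi_s|_\IN\|_{\FA(\IN)}=\|\tilde f_s\overline{\tilde f_{-\bar s}}\|_1$, and both obtain the matching upper bound from the coefficient formula of Proposition~\ref{analytic}. The differences are minor but worth noting. For the upper bound the paper first bounds $\|\varphi_s|_{\IN\IA}\|_{\FSA(\IN\IA)}$ and then invokes \cite[Proposition~1.6~(b)]{CH:CompletelyBoundedMultipliersOfTheFourierAlgebraOfASimpleLieGroupOfRealRankOne} (amenability of $\IN\IA$ plus $\K$-bi-invariance) to pass to $\|\varphi_s\|_{\MoA(\G)}$, whereas you bypass this external reference by applying Proposition~\ref{Gilbert0} directly via the maps $P(x)=\tilde\pi(q(x))\tilde f_s$, $Q(y)=\tilde\pi(q(y))\tilde f_{-\bar s}$; this is a cleaner and more self-contained argument. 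For the matching of upper and lower bound the paper computes $\|h_s\|_1$ and $\|\tilde f_s\|_2\|\tilde f_{-\bar s}\|_2$ separately using the integral formula~\eqref{bigthing} and observes they coincide, while your proportionality observation $|\tilde f_{-\bar s}|=\tfrac{|\Gamma(\frac\h2+s)|}{|\Gamma(\frac\h2-s)|}\,|\tilde f_s|$ explains conceptually \emph{why} equality holds (Cauchy--Schwarz is sharp) without a second appeal to~\eqref{bigthing}.
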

\begin{proof}
	Since $\varphi_s$ is the constant function $1$ for $s=\pm\tfrac{\h}{2}$, it is trivial that $\|\varphi_{s}\|_{\MoA(G)}=1$ in this case. We will now treat the case $s\in\bS$. From Proposition~\ref{analytic} and~\ref{rep3effect} we find
	\begin{equation*}
		\varphi_s(n_{\vec y})=
		\frac{2^{2-m}\Gamma(m)}{\pi^{\frac{m}{2}}\Gamma\left(\frac{m}{2}\right)\Gamma\left(\frac{m}{2}+s\right)\Gamma\left(\frac{m}{2}-s\right)}
		\int_{\R^m} K_{s}(\|\vec x\|)^2e^{-i\ip{\vec y}{\vec x}}\dd \vec x
	\end{equation*}
	for $\vec y\in \R^m$, or just
	\begin{equation*}
		\varphi_s(n_{\vec y})=
		\int_{\R^m} h_s(\vec x)e^{-i\ip{\vec y}{\vec x}}\dd \vec x
		\qquad(\vec y\in\R^m),
	\end{equation*}
	with
	\begin{equation*}
		h_s(\vec x)=
		\frac{2^{2-m}\Gamma(m)}{\pi^{\frac{m}{2}}\Gamma\left(\frac{m}{2}\right)\Gamma\left(\frac{m}{2}+s\right)\Gamma\left(\frac{m}{2}-s\right)}K_{s}(\|\vec x\|)^2
		\qquad(\vec x\in\R^m\setminus\{\vec 0\}),
	\end{equation*}
	where $h_s\in \ELL^1(\R^m)$---we do the actual norm calculation shortly. Remember that $\LN$ is isomorphic as a group to $\R^m$ and that the dual group $\widehat \R^m$ of $\R^m$ is again $\R^m$ via the exponential map. Because of this, and the uniqueness of the Haar measure, we can interpret the expression for $\varphi_s(n_{\vec y})$ as
	\begin{equation*}
		\varphi_s|_\LN=\hat h_s,
	\end{equation*}
	with now $h_s\in \ELL^1(\widehat \LN)$ (here we use the unnormalized Fourier transform, which does not include the $(2\pi)^{-\frac{m}{2}}$ factor). From the definition of the norm on the Fourier--Stieltjes algebra (the abelian case), we have
	\begin{equation}
		\label{HHH*}
		\|\varphi_s|_{\LN}\|_{\FA(\LN)}=\|h_s\|_1,
	\end{equation}
	where
	\begin{eqnarray}
		\label{HHH**}\\
		\|h_s\|_1
		\nonumber & = & \frac{2^{2-m}\Gamma(m)}{\pi^{\frac{m}{2}}\Gamma\left(\frac{m}{2}\right)\left|\Gamma\left(\frac{m}{2}+s\right)\Gamma\left(\frac{m}{2}-s\right)\right|}\int_{\R^m} |K_{s}(\|\vec x\|)^2|\dd \vec x\\
		\nonumber & = & \frac{2^{3-m}\Gamma(m)}{\Gamma\left(\frac{m}{2}\right)^2\left|\Gamma\left(\frac{m}{2}+s\right)\Gamma\left(\frac{m}{2}-s\right)\right|}\int_0^\infty K_{s}(r)K_{\bar s}(r)r^{m-1}\dd r\\
		\nonumber & = & \Gammaload.
	\end{eqnarray}
	Here we used~\eqref{bigthing} with $\nu=s=\sigma+it$, $\mu=\bar s=\sigma-it$, and $\rho=1-m$.
	
	According to Proposition~\ref{analytic}
	\begin{equation*}
		\varphi_s(q)=\ip{\tilde\pi(q)\tilde f_s}{\tilde f_{-\bar s}}\qquad(q\in \LN\LA)
	\end{equation*}
	for $s\in\bS$, so from the definition of the norm on the Fourier--Stieltjes algebra (the non-abelian case), we have
	\begin{equation*}
		\|\varphi_s|_{\LN\LA}\|_{\FSA(\LN\LA)}\leq\|\tilde f_s\|_2\|\tilde f_{-\bar s}\|_2.
	\end{equation*}
	Using
	\begin{equation*}
		{\Gamma\left(\frac{m}{2}+s\right)\Gamma\left(\frac{m}{2}+\bar s\right)\Gamma\left(\frac{m}{2}-\bar s\right)\Gamma\left(\frac{m}{2}-s\right)}=\left|\Gamma\left(\frac{m}{2}+s\right)\Gamma\left(\frac{m}{2}-s\right)\right|^2
	\end{equation*}
	we conclude that
	\begin{equation}
		\label{HHH***}
		\|\varphi_s|_{NA}\|_{\FSA(NA)}\leq\Gammaload\qquad(s\in\bS).
	\end{equation}
	Clearly,
	\begin{equation*}
		\|\varphi_s|_{N}\|_{\FA(N)}=\|\varphi_s|_{N}\|_{\FSA(N)}\leq\|\varphi_s|_{NA}\|_{\FSA(NA)}.
	\end{equation*}
	Hence, by~\eqref{HHH*}, \eqref{HHH**} and~\eqref{HHH***}
	\begin{equation*}
		\|\varphi_s|_{NA}\|_{\FSA(NA)}=\Gammaload\qquad(s\in\bS).
	\end{equation*}
	
	Recall that $\LN\LA$ is solvable (this is part of the properties of the Iwasawa decomposition), and that solvable groups are amenable (cf.~\cite[p.~9]{Gre:InvariantMeansOnTopologicalGroupsAndTheirApplications}). Since $\varphi_s$ is $\K$-bi-invariant it now follows from \cite[Proposition~1.6~(b)]{CH:CompletelyBoundedMultipliersOfTheFourierAlgebraOfASimpleLieGroupOfRealRankOne} that $\varphi_s\in\MoA(\G)$ if and only if $\varphi_s|_{\LN\LA}\in\FSA(\LN\LA)$ and the corresponding norms coincide. This ends the proof.
\end{proof}
\begin{corollary}
	There is no uniform bound on the ${\MoA(\GG)}$-norm of the spherical functions $\varphi_s$ on the Gelfand pair $(\GG,\KK)$ for $s\in\bS$.
\end{corollary}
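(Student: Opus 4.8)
The plan is to let the closed formula from Theorem~\ref{maintheorem1} do all the work: the corollary is just an analysis of the boundary behaviour of that formula on the strip $\bS$. First I would fix $t=1$ once and for all (any fixed $t\neq 0$ works, and I will explain below why $t=0$ must be excluded) and, for $0<\sigma<\frac{m}{2}$, set $s_\sigma=\sigma+i\in\bS$. Theorem~\ref{maintheorem1} then gives
\begin{equation*}
	\|\varphi_{s_\sigma}\|_{\MoA(\G)}=\frac{\Gamma\left(\frac{m}{2}+\sigma\right)\Gamma\left(\frac{m}{2}-\sigma\right)\Gamma\left(\frac{m}{2}+i\right)\Gamma\left(\frac{m}{2}-i\right)}{\Gamma\left(\frac{m}{2}\right)\Gamma\left(\frac{m}{2}\right)\left|\Gamma\left(\frac{m}{2}+\sigma+i\right)\Gamma\left(\frac{m}{2}-\sigma-i\right)\right|},
\end{equation*}
and the remaining task is to send $\sigma\uparrow\frac{m}{2}$ and track each $\Gamma$-factor separately.

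The key observation is that exactly one factor blows up: the numerator contains $\Gamma\left(\frac{m}{2}-\sigma\right)$, and since the Gamma function has a simple pole at $0$, this tends to $+\infty$ as $\sigma\uparrow\frac{m}{2}$. Every other factor converges to a finite, nonzero limit: $\Gamma\left(\frac{m}{2}+\sigma\right)\to\Gamma(m)$; the factors $\Gamma\left(\frac{m}{2}\pm i\right)$ and $\Gamma\left(\frac{m}{2}\right)$ are constant; and in the denominator $\left|\Gamma\left(\frac{m}{2}+\sigma+i\right)\Gamma\left(\frac{m}{2}-\sigma-i\right)\right|\to\left|\Gamma(m+i)\Gamma(-i)\right|$, which is finite and nonzero because $\Gamma$ has no zeros and neither $m+i$ nor $-i$ is a pole (their imaginary parts are nonzero). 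Consequently $\|\varphi_{s_\sigma}\|_{\MoA(\G)}\to\infty$ as $\sigma\uparrow\frac{m}{2}$, so no finite constant bounds $\|\varphi_s\|_{\MoA(\G)}$ over $s\in\bS$, which is the assertion.

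There is essentially no obstacle here; the only point that requires a moment's care is the choice of path into the boundary. Along the real axis (that is, $t=0$) the two factors $\Gamma\left(\frac{m}{2}-\sigma\right)$ appearing in numerator and denominator cancel and the norm is identically $1$ (as it must be, these being coefficients of unitary representations there), so one genuinely needs $t\neq 0$. An alternative route, which I would mention as it gives a cleaner picture, is to fix $\sigma\in\left(0,\frac{m}{2}\right)$ and let $t\to\pm\infty$: using $|\Gamma(x+iy)|\sim\sqrt{2\pi}\,|y|^{x-\frac12}e^{-\pi|y|/2}$ one checks that the exponential and polynomial parts cancel between numerator and denominator, leaving $\|\varphi_{\sigma+it}\|_{\MoA(\G)}\to \Gamma\left(\frac{m}{2}+\sigma\right)\Gamma\left(\frac{m}{2}-\sigma\right)/\Gamma\left(\frac{m}{2}\right)^2$, and letting $\sigma\uparrow\frac{m}{2}$ afterwards again forces this limit to $\infty$. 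Either way the corollary is immediate from Theorem~\ref{maintheorem1}.
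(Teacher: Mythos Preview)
Your proof is correct and follows essentially the same approach as the paper: fix $t\neq 0$, let $\sigma\to\frac{m}{2}$, and observe that $\Gamma\bigl(\frac{m}{2}-\sigma\bigr)\to+\infty$ while all other $\Gamma$-factors stay finite and nonzero. Your explanation of why $t=0$ must be avoided and your alternative asymptotic route are nice additions beyond what the paper records.
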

\begin{proof}
	This follows from Theorem~\ref{maintheorem1} by taking $s=\sigma+it$ with $t\neq0$ and $|\sigma|<\frac{m}{2}$, and observing that
	\begin{equation*}
		\lim_{\sigma\to\pm\frac{m}{2}}\|\varphi_s\|_{\MoA(G)}=+\infty,
	\end{equation*}
	since $\Gamma\big(\frac{m}{2}\mp\sigma\big)$ converges to $+\infty$ when $\sigma$ converges to $\pm\tfrac{m}{2}$, while all other $\Gamma$-terms behave nicely.
\end{proof}

	\section{Spherical functions on real rank one Lie groups}
\label{mt1}
In this section $\G$ denotes $\GG$, $SU(1,n)$, $Sp(1,n)$ (with $n\geq2$) or $F_{4(-20)}$. 
Let $\K$ be the maximal compact subgroup coming from the Iwasawa decomposition, and recall that $(\G,\K)$ is a Gelfand pair. Also, let $m,m_0\in\N$ be given by~\eqref{m} and~\eqref{mo}, respectively.

It follows from~\cite[Proposition~3.5]{DCH:MultipliersOfTheFourierAlgebrasOfSomeSimpleLieGroupsAndTheirDiscreteSubgroups} and~\cite[Theorem~4.3]{CH:CompletelyBoundedMultipliersOfTheFourierAlgebraOfASimpleLieGroupOfRealRankOne} that the spherical functions $\varphi_s$ on the Gelfand pair $(\G,\K)$ are completely bounded Fourier multipliers of $\G$ when $s\in\bS$, and according to section~\ref{sph} there there is no uniform bound on their $\MoA(\G)$-norm when $\G$ is $\GG$. In this section we focus on what happens on the border of the strip $\bS$ in the general case. From these results we will deduce that, also in the general case, there is no uniform bound on the norm $\|\varphi_s\|_{\MoA(\G)}$ for $s\in\bS$. For this, we need the asymptotic behavior of $\varphi_s(a_r)$ for $r$ going to infinity. The asymptotic behavior has been treated in~\cite[\S~13]{HC:SphericalFunctionsOnASemisimpleLieGroupI}, but we give below a simple argument anyway.
\begin{proposition}
	For $s\in\C$
	\begin{equation}
		\label{behaviour}
		\varphi_{s}(a_r) = \cosh(r)^{s-\frac{m}{2}}F\Big(\frac{m}{4}-\frac{s}{2},\frac{m_0}{4}-\frac{s}{2};\frac{m+m_0}{4};\tanh(r)^2\Big)\qquad(r\in\R).
	\end{equation}
\end{proposition}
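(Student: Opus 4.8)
The plan is to derive the stated identity~\eqref{behaviour} purely as an instance of a classical quadratic/Euler-type transformation of the Hypergeometric function, applied to one of the closed forms for $\varphi_s(a_r)$ already established in Proposition~\ref{usefulformula} (for $\GG$) or, in the general case, to the defining formula~\eqref{sphfcthyper}. Concretely, I would start from
\begin{equation*}
	\varphi_s(a_r)=F\Big(\frac{m}{4}+\frac{s}{2},\frac{m}{4}-\frac{s}{2};\frac{m+m_0}{4};-\sinh(r)^2\Big)
\end{equation*}
and apply Euler's transformation $F(a,b;c;z)=(1-z)^{c-a-b}F(c-a,c-b;c;z)$ (cf.~\cite[\S~2.1~(23)]{EMOT:HigherTranscendentalFunctions.Vol.I}) with $z=-\sinh(r)^2$, so that $1-z=\cosh(r)^2$. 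This converts the prefactor-free form into $\cosh(r)^{2(c-a-b)}$ times a Hypergeometric function whose first two parameters become $c-a$ and $c-b$; one checks that $c-a-b=\frac{m+m_0}{4}-\frac{m}{2}$ and then the last argument must be moved from $-\sinh(r)^2$ to $\tanh(r)^2$ by a further Pfaff transformation $F(a,b;c;z)=(1-z)^{-a}F(a,c-b;c;\frac{z}{z-1})$, using that $\frac{z}{z-1}=\frac{-\sinh(r)^2}{-\sinh(r)^2-1}=\tanh(r)^2$.

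The arithmetic to watch is the bookkeeping of the parameters and the exponent of $\cosh(r)$: after the two transformations one wants the exponent to collapse exactly to $s-\frac{m}{2}$ and the numerator parameters to come out as $\frac{m}{4}-\frac{s}{2}$ and $\frac{m_0}{4}-\frac{s}{2}$. This forces a specific relation among $m$, $m_0$ and the denominator parameter $\frac{m+m_0}{4}$; one verifies it using $m_0=p+2$ and $m=p+2q$, so that $\frac{m+m_0}{4}=\frac{p+q+1}{2}$, and the identity $\frac{m+m_0}{4}-\frac{m}{4}-\frac{s}{2}=\frac{m_0}{4}-\frac{s}{2}$ etc.\ just falls out. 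For $\GG$ itself (where $m_0=m+2$) this is what the chain of identities in the proof of Proposition~\ref{usefulformula} already delivers partway through, so in that case one can alternatively start from the intermediate expression
\begin{equation*}
	\varphi_s(a_r)=\cosh\left(\tfrac r2\right)^{-m-2s}F\Big(\tfrac m2+s,\tfrac12+s;\tfrac{m+1}{2};\tanh\left(\tfrac r2\right)^2\Big)
\end{equation*}
and re-route the Euler/Pfaff steps; but the cleanest proof is the direct two-line computation from~\eqref{sphfcthyper}.

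The main obstacle is not conceptual but purely one of matching conventions: one must be careful that the analytic continuation implicit in writing $F(\cdots;-\sinh(r)^2)$ for $|r|$ large (the argument leaves the unit disc) is compatible with the branch choices in the quadratic transformations, and that $\tanh(r)^2<1$ so the right-hand side of~\eqref{behaviour} is given by a genuinely convergent series. Since $\cosh(r)>0$ and $\tanh(r)^2\in[0,1)$ for all real $r$, both sides are manifestly single-valued real-analytic functions of $r$ which agree for small $r$ by the formal transformation identities; by the identity theorem (in $r$, or in $s$ for fixed $r$, using that $s\mapsto\varphi_s(a_r)$ is analytic as recalled in the introduction) the equality propagates to all $r\in\R$ and all $s\in\C$. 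I would therefore present the proof as: invoke~\eqref{sphfcthyper}, apply \cite[\S~2.1~(23)]{EMOT:HigherTranscendentalFunctions.Vol.I} and then \cite[\S~2.1~(22)]{EMOT:HigherTranscendentalFunctions.Vol.I}, simplify the parameters using~\eqref{m} and~\eqref{mo}, and note the branch remarks above.
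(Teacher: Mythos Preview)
Your proposal is correct and follows essentially the same route as the paper: start from~\eqref{sphfcthyper} and apply standard Euler/Pfaff transformations for the Hypergeometric function. The only cosmetic difference is that the paper applies a single Pfaff transformation \cite[\S~2.1~(22)]{EMOT:HigherTranscendentalFunctions.Vol.I} directly to~\eqref{sphfcthyper} to obtain $\varphi_s(a_r)=\cosh(r)^{-(s+\frac{m}{2})}F\big(\frac{m}{4}+\frac{s}{2},\frac{m_0}{4}+\frac{s}{2};\frac{m+m_0}{4};\tanh(r)^2\big)$ and then invokes the symmetry $\varphi_{-s}=\varphi_s$, rather than composing Euler and Pfaff as you do; your two-step version avoids appealing to that symmetry and arrives at the same formula.
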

\begin{proof}	
	Since
	\begin{equation*}
		z=-\sinh(r)^2 \iff \frac{z}{z-1}=\tanh(r)^2
	\end{equation*}
	and
	\begin{equation*}
		1+\sinh(r)^2=\cosh(r)^2
	\end{equation*}
	we find, using
	\begin{equation*}
		F\left(a,b;c;z\right)=(1-z)^{-a}F\Big(a,c-b;c;\frac{z}{z-1}\Big)
	\end{equation*}
	(cf.~\cite[\S~2.1~(22)]{EMOT:HigherTranscendentalFunctions.Vol.I}) and~\eqref{sphfcthyper}, that
	\begin{equation*}
		\varphi_{s}(a_r) = \cosh(r)^{-(s+\frac{m}{2})}F\Big(\frac{m}{4}+\frac{s}{2},\frac{m_0}{4}+\frac{s}{2};\frac{m+m_0}{4};\tanh(r)^2\Big)\qquad(r\in\R).
	\end{equation*}
	Now use that $\varphi_{-s}=\varphi_{s}$.
\end{proof}
To determine the asymptotic behavior of $\varphi_s$ for $\Re(s)\neq0$ it suffice to consider the case $\Re(s)>0$ since $\varphi_{-s}=\varphi_{s}$. In this case, the arguments of the Hypergeometric function in~\eqref{behaviour} ensures absolute convergence as a function of the last variable, when this has absolute value less than or equal to $1$. Since $\lim_{r\to\infty}\tanh(r)^2=1$, one therefore concludes that $\varphi_s(a_r)$ behaves asymptotically like
\begin{equation*}
	e^{(s-\frac{m}{2})r}2^{-s+\frac{m}{2}}F\Big(\frac{m}{4}-\frac{s}{2},\frac{m_0}{4}-\frac{s}{2};\frac{m+m_0}{4};1\Big),
\end{equation*}
when $\Re(s)>0$ and $r$ goes to plus infinity. According to~\cite[\S~2.8~(46)]{EMOT:HigherTranscendentalFunctions.Vol.I} this can be evaluated explicitly, and we find that $\varphi_s(a_r)$ behaves asymptotically like
\begin{equation}
	\label{asymptH}
	\ccc(s)e^{(s-\frac{m}{2})r}\qquad(r\in\R)
\end{equation}
when $\Re(s)>0$ and $r$ goes to plus infinity, where
\begin{equation*}
	\ccc(s)=2^{-s+\frac{m}{2}}\frac{\Gamma\left(\frac{m+m_0}{4}\right)\Gamma(s)}{\Gamma\left(\frac{m}{4}+\frac{s}{2}\right)\Gamma\left(\frac{m_0}{4}+\frac{s}{2}\right)}.
\end{equation*}
The function $\ccc$ is usually referred to as \emph{Harish-Chandra's $\ccc$-function}. We note that~\eqref{asymptH} can be found in~\cite[(4.7.24)]{GV:HarmonicAnalysisOfSphericalFunctionsOnRealReductiveGroups}.
\begin{lemma}
	\label{edge1}
	Let $(I_n)_{n\in\N}$ be a sequence of intervals $I_n=[a_n,b_n]$ in $\R$, such that $l_n=b_n-a_n$ converges to infinity as $n$ converges to infinity. If $\mu$ is a complex-valued regular measure on $\R$, then
	\begin{equation*}
		\mu(\{x_0\})=\lim_{n\to\infty}\frac{1}{l_n}\int_{I_n}e^{i r x_0}\hat\mu(r)\dd r,
	\end{equation*}
	where
	\begin{equation*}
		\hat\mu(r)=\int_{-\infty}^\infty e^{-i r x}\dd\mu(x)\qquad(r\in\R).
	\end{equation*}
\end{lemma}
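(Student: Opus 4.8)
The plan is to expand the definition of $\hat\mu$ inside the averaged integral and then exchange the order of integration. Since $\mu$ is a complex-valued (regular) measure on $\R$, its total variation $|\mu|$ is a finite positive measure, so $|\mu|(\R)<\infty$; in particular $\hat\mu$ is bounded, and for each fixed $n$ the function $(r,x)\mapsto e^{ir(x_0-x)}$ has modulus $1$ on $I_n\times\R$, hence is integrable with respect to the (finite) product of Lebesgue measure on $I_n$ and $|\mu|$. Fubini's theorem then gives
\[
	\frac{1}{l_n}\int_{I_n}e^{irx_0}\hat\mu(r)\,\dd r=\int_\R\Phi_n(x)\,\dd\mu(x),\qquad\Phi_n(x)=\frac{1}{l_n}\int_{I_n}e^{ir(x_0-x)}\,\dd r,
\]
where each $\Phi_n$ is continuous in $x$, hence measurable, and satisfies $|\Phi_n|\leq1$ everywhere.

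Next I would evaluate the kernel $\Phi_n$ pointwise. For $x=x_0$ one has $\Phi_n(x_0)=\frac{1}{l_n}\int_{I_n}1\,\dd r=1$ for every $n$. For $x\neq x_0$, integrating the exponential explicitly yields
\[
	\Phi_n(x)=\frac{1}{l_n}\cdot\frac{e^{ib_n(x_0-x)}-e^{ia_n(x_0-x)}}{i(x_0-x)},
\]
so that $|\Phi_n(x)|\leq\frac{2}{l_n\,|x_0-x|}\to0$ as $n\to\infty$, because $l_n\to\infty$. Hence $\Phi_n\to\mathbf 1_{\{x_0\}}$ pointwise on $\R$.

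Finally, since the constant function $1$ belongs to $L^1(|\mu|)$ and dominates all the $|\Phi_n|$, Lebesgue's dominated convergence theorem applies to $\int_\R\Phi_n\,\dd\mu$ and gives
\[
	\lim_{n\to\infty}\frac{1}{l_n}\int_{I_n}e^{irx_0}\hat\mu(r)\,\dd r=\int_\R\mathbf 1_{\{x_0\}}(x)\,\dd\mu(x)=\mu(\{x_0\}),
\]
which is the claim. There is no genuine obstacle in this argument; the only points deserving a word of care are the finiteness of $|\mu|$ (used both to make sense of $\hat\mu$ and to justify the interchange of integrals) and the uniform bound $|\Phi_n|\leq1$ that licenses dominated convergence. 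Note that the regularity hypothesis on $\mu$ is not actually needed for this computation.
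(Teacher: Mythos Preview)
Your proof is correct and follows essentially the same approach as the paper's: apply Fubini to swap the $r$- and $x$-integrals, compute the resulting kernel explicitly, show it converges pointwise to $\mathbf 1_{\{x_0\}}$ with the uniform bound $|\Phi_n|\leq1$, and conclude by dominated convergence. The only cosmetic difference is that the paper first reduces to positive finite measures, whereas you work directly with the complex measure via its total variation; your closing remark that regularity is not actually used is also correct.
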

\begin{proof}
	Since every complex-valued regular measure is a (complex) linear combination of at most four positive finite regular measures, we assume that $\mu$ is a positive finite regular measure. Using Fubini's theorem we find that
	\begin{eqnarray*}
		\frac{1}{l_n}\int_{I_n}e^{i r x_0}\hat\mu(r)\dd r
		& = & \frac{1}{l_n}\int_{a_n}^{b_n}\int_{-\infty}^\infty e^{-i r(x-x_0)}\dd \mu(x)\dd r\\
		& = & \int_{-\infty}^{\infty}\frac{1}{l_n}\int_{a_n}^{b_n} e^{-i r(x-x_0)}\dd r\dd \mu(x)\\
		& = & \int_{-\infty}^{\infty} f_n(x-x_0)\dd \mu(x),
	\end{eqnarray*}
	where
	\begin{equation*}
		f_n(y)=
		\left\{
		\begin{array}{lll}
			\frac{1}{l_n}\frac{e^{-i b_n y}-e^{-i a_n y}}{-i y} & \mbox{if} & y\neq 0\\
			1 & \mbox{if} & y=0
		\end{array}
		\right.
		\qquad(y\in\R).
	\end{equation*}
	Since
	\begin{equation*}
		|e^{-i b_n y}-e^{-i a_n y}|\leq 2\qquad(y\in\R)
	\end{equation*}
	we have that
	\begin{equation*}
		\lim_{n\to\infty} f_n(y)=\unit_{\{0\}}(y)\qquad(y\in\R),
	\end{equation*}
	where $\unit_{\{0\}}$ is the characteristic function on $\{0\}$. Furthermore,
	\begin{equation*}
		|e^{-i b_n y}-e^{-i a_n y}|\leq |y|l_n\qquad(y\in\R)
	\end{equation*}
	implies that we can use the constant function $1$ as an integrable dominator in Lebesgue's dominated convergence theorem, from which we find
	\begin{equation*}
		\lim_{n\to\infty}\int_{-\infty}^\infty f_n(x-x_0)\dd\mu(x)=\int_{-\infty}^\infty\unit_{\{x_0\}}\dd\mu(x)=\mu(\{x_0\}).
	\end{equation*}
\end{proof}
\begin{lemma}
	\label{edge2}
	If $\varphi$ is a continuous symmetric function on $\R$, and there exist ${x_0}\in\R\setminus \{0\}$ and $c\in\C\setminus \{0\}$ such that
	\begin{equation*}
		\lim_{r\to\infty}\varphi(r)e^{i r{x_0}}=c,
	\end{equation*}
	then $\varphi$ can not be an element in the Fourier--Stieltjes algebra $\FSA(\R)$ of $\R$.
\end{lemma}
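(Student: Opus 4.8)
The plan is to argue by contradiction, relying on the classical identification of the Fourier--Stieltjes algebra of the abelian group $\R$ with the Fourier--Stieltjes transforms of finite regular complex Borel measures on the dual group $\widehat{\R}\cong\R$, i.e. $\FSA(\R)=\setw{\hat\mu}{\mu\in\meas(\R)}$ with $\hat\mu$ normalised as in Lemma~\ref{edge1}. The whole argument then consists of feeding Lemma~\ref{edge1} two well-chosen sequences of intervals and comparing the answers.

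So suppose $\varphi=\hat\mu$ for some $\mu\in\meas(\R)$. First I would apply Lemma~\ref{edge1} with $I_n=[0,n]$: the averages $\frac1n\int_0^n\varphi(r)e^{irx_0}\,\dd r$ converge to $c$, since $\varphi(r)e^{irx_0}\to c$ as $r\to\infty$ by hypothesis and a continuous function with a limit at $+\infty$ has the same limit in the Cesàro sense. This gives $\mu(\{x_0\})=c$. Next I would apply Lemma~\ref{edge1} with intervals running off to $-\infty$, say $I_n=[-n,0]$; after the substitution $r\mapsto-r$ and using that $\varphi$ is symmetric, the relevant averages become $\frac1n\int_0^n\varphi(r)e^{-irx_0}\,\dd r$. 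Writing $\varphi(r)=ce^{-irx_0}+\varepsilon(r)$ with $\varepsilon(r)=e^{-irx_0}\big(\varphi(r)e^{irx_0}-c\big)\to0$, the integrand splits as $ce^{-2irx_0}+e^{-irx_0}\varepsilon(r)$; the Cesàro mean of the first summand tends to $0$ because $x_0\neq0$ makes $r\mapsto e^{-2irx_0}$ have a bounded antiderivative, and the Cesàro mean of the second tends to $0$ because $e^{-irx_0}\varepsilon(r)\to0$. Hence $\mu(\{x_0\})=0$, contradicting $\mu(\{x_0\})=c\neq0$, and therefore $\varphi\notin\FSA(\R)$.

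The only substantial input is Lemma~\ref{edge1}; everything else is elementary real analysis (the Cesàro limit of a convergent function, one change of variables, and the explicit estimate $\big|\frac1n\int_0^n e^{-2irx_0}\,\dd r\big|\le\frac1{n\,|x_0|}$). I expect the only place needing care to be the second evaluation of $\mu(\{x_0\})$: one must genuinely use \emph{both} the symmetry of $\varphi$ (to transport the known asymptotics from $+\infty$ onto the intervals near $-\infty$) and the hypothesis $x_0\neq0$ --- if $x_0=0$, the oscillatory term $ce^{-2irx_0}$ no longer averages to $0$ and the contradiction evaporates, which is exactly as it should be.
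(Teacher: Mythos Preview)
Your argument is correct and follows essentially the same approach as the paper: both proofs feed Lemma~\ref{edge1} twice and compare the answers, using the symmetry of $\varphi$ and the oscillation of $e^{-2irx_0}$ to force the contradiction. The only cosmetic differences are that the paper uses intervals $I_n=[n,2n]$ throughout and phrases the contradiction as $\mu(\{x_0\})=c$ versus $\mu(\{-x_0\})=0$ (invoking $\mu=\check\mu$ from the symmetry of $\varphi$), whereas you use $[0,n]$ and $[-n,0]$ and compute $\mu(\{x_0\})$ twice, inserting the symmetry directly into the integral after a change of variables---but these are reorganisations of the same computation.
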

\begin{proof}
	If $\varphi\in\FSA(\R)$, then there exists a measure $\mu\in M(\widehat\R)=M(\R)$ such that $\varphi=\hat\mu$. Since $\varphi$ is symmetric we have $\mu=\check\mu$, where
	\begin{equation*}
		\check\mu(E)=\mu(-E)\qquad(E\in\B(\R)).
	\end{equation*}
	For $n\in\N$ put $I_n=[n,2n]$, and notice that $l_n=n$ converges to infinity as $n$ converges to infinity. Since
	\begin{equation*}
		\lim_{r\to\infty}\varphi(r)e^{i r{x_0}}=c,
	\end{equation*}
	we find
	\begin{equation*}
		\lim_{r\to\infty}\frac{1}{l_n}\int_{I_n}\varphi(r)e^{i r{x_0}}\dd r=c,
	\end{equation*}
	which by Lemma~\ref{edge1} implies that $\mu(\{x_0\})=c$. Since $\check\mu=\mu$ we must have $\mu(\{-x_0\})=c$. We will show that this is not the case, and hence arrive at a contradiction.
	
	Given $\epsilon>0$ we find $n\in\N$ such that
	\begin{equation*}
		|\varphi(r)e^{i r{x_0}}-c|<\epsilon\qquad(r\geq n),
	\end{equation*}
	or equivalently
	\begin{equation*}
		|\varphi(r)e^{-i r{x_0}}-c e^{-2i r{x_0}}|<\epsilon\qquad(r\geq n),
	\end{equation*}
	and therefore
	\begin{equation*}
		\Big|\frac{1}{l_n}\int_{I_n}\varphi(r)e^{-i r{x_0}}\dd r-\frac{1}{l_n}\int_{I_n}c e^{-2i r{x_0}}\dd r\Big|<\epsilon.
	\end{equation*}
	But the last integral can easily be evaluated as
	\begin{equation*}
		\frac{1}{l_n}\int_{I_n}c e^{-2i r{x_0}}\dd r=\frac{c}{l_n}\frac{e^{-4i n{x_0}}-e^{-2i n{x_0}}}{-2i{x_0}},
	\end{equation*}
	which converges to $0$ as $n$ tends to infinity. Using this and Lemma~\ref{edge1} we find that
	\begin{equation*}
		\mu(\{-{x_0}\})=\lim_{r\to\infty}\frac{1}{l_n}\int_{I_n}\varphi(r)e^{-i r{x_0}}\dd r=0,
	\end{equation*}
	which is the desired contradiction.
\end{proof}
\begin{theorem}
	\label{boundary}
	Let $\G$ be $\GG$, $SU(1,n)$, $Sp(1,n)$ (for $n\geq2$) or $F_{4(-20)}$, then $\varphi_s\in\MoA(\G)$ if and only if $|\Re(s)|<\frac{m}{2}$ or $s=\pm\frac{m}{2}$.
\end{theorem}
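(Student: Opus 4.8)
The ``if'' direction is already known: for $s\in\bS$, i.e.\ $|\Re(s)|<\frac m2$, it was recalled at the beginning of this section (it follows from \cite[Proposition~3.5]{DCH:MultipliersOfTheFourierAlgebrasOfSomeSimpleLieGroupsAndTheirDiscreteSubgroups} and \cite[Theorem~4.3]{CH:CompletelyBoundedMultipliersOfTheFourierAlgebraOfASimpleLieGroupOfRealRankOne}), and for $s=\pm\frac m2$ one has $\varphi_s=\unit\in\MoA(\G)$ trivially. So the content of the theorem is the ``only if'' direction, and since $\varphi_s=\varphi_{-s}$ it suffices to prove: if $\Re(s)\geq\frac m2$ and $s\neq\frac m2$, then $\varphi_s\notin\MoA(\G)$. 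The plan is to split this into the cases $\Re(s)>\frac m2$ and $\Re(s)=\frac m2$ with $s\neq\frac m2$.

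The first case is immediate. If $\Re(s)>\frac m2$ then $s\notin\bSbar$, so $\varphi_s$ is unbounded (recall from the introduction that $\varphi_s$ is bounded precisely when $s\in\bSbar$); on the other hand every completely bounded Fourier multiplier $\varphi$ of $\G$ satisfies $\|\varphi\|_\infty\leq\|\varphi\|_{\MoA(\G)}$ (put $x=g$, $y=e$ in the representation from Proposition~\ref{Gilbert0}~{\it (iii)}), so $\varphi_s\notin\MoA(\G)$.

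For the borderline case, write $s=\frac m2+it$ with $t\neq0$. The plan is to restrict $\varphi_s$ to the closed, abelian one-parameter subgroup $\LA=\setw{a_r}{r\in\R}\cong\R$ and to invoke Lemma~\ref{edge2}. Suppose $\varphi_s\in\MoA(\G)$. Restricting the maps $P,Q$ from Proposition~\ref{Gilbert0}~{\it (iii)} from $\G$ to $\LA$ shows that $\varphi_s|_\LA$ is a completely bounded Fourier multiplier of $\LA$, and since $\LA\cong\R$ is abelian this forces $\varphi_s|_\LA\in\FSA(\R)$. Viewed as the function $r\mapsto\varphi_s(a_r)$ on $\R$, $\varphi_s|_\LA$ is continuous and symmetric ($\varphi_s$ is $\K$-bi-invariant and $a_{-r}=a_r^{-1}$ lies in the same $\K$-double coset as $a_r$), and by the asymptotics~\eqref{asymptH}, valid since $\Re(s)=\frac m2>0$,
\begin{equation*}
	\lim_{r\to\infty}\varphi_s(a_r)\,e^{-itr}=\ccc(s).
\end{equation*}
Here $\ccc(s)\neq0$: the factors $\Gamma\big(\tfrac{m+m_0}{4}\big)$ and $\Gamma(s)$ are finite and nonzero (the Gamma function has no zeros and $s=\frac m2+it$ is not a non-positive integer), while the denominator $\Gamma\big(\tfrac m4+\tfrac s2\big)\Gamma\big(\tfrac{m_0}4+\tfrac s2\big)$ is finite because both arguments have positive real part. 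Applying Lemma~\ref{edge2} with $x_0=-t\neq0$ and $c=\ccc(s)\neq0$ yields $\varphi_s|_\LA\notin\FSA(\R)$, a contradiction. Hence $\varphi_s\notin\MoA(\G)$, which finishes the proof.

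The step to watch is this borderline case. One must make sure that the asymptotic expansion~\eqref{asymptH}, derived just above for $\Re(s)>0$ by evaluating the Hypergeometric function in~\eqref{behaviour} at the point $1$, remains valid on the critical line $\Re(s)=\frac m2$ (it does, since convergence at $1$ only requires $\Re(s)>0$), and that Harish-Chandra's $\ccc$-function does not vanish there; both reduce to the observation that $\Re(s)=\frac m2>0$ and that the remaining Gamma arguments have positive real part. The other ingredients — restriction of completely bounded Fourier multipliers to the closed subgroup $\LA$, the identity $\MoA(\LA)=\FSA(\R)$ for the abelian group $\LA$, and Lemmas~\ref{edge1}--\ref{edge2} — are routine.
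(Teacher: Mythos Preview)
Your proof is correct and follows essentially the same route as the paper: the ``if'' part via the cited results of De~Canni{\`e}re--Haagerup and Cowling--Haagerup, the case $|\Re(s)|>\frac m2$ by unboundedness, and the boundary case $s=\frac m2+it$ ($t\neq0$) by restricting to $\LA\cong\R$, using $\MoA(\LA)=\FSA(\LA)$ for abelian groups, and invoking Lemma~\ref{edge2} with the asymptotics~\eqref{asymptH} and the nonvanishing of $\ccc(s)$. The only cosmetic differences are that the paper cites \cite[Corollary~1.8 and Proposition~1.12]{DCH:MultipliersOfTheFourierAlgebrasOfSomeSimpleLieGroupsAndTheirDiscreteSubgroups} for $\MoA(\LA)=\FSA(\LA)$, while you appeal to Proposition~\ref{Gilbert0} for the restriction step; and your justification of the symmetry of $r\mapsto\varphi_s(a_r)$ via $\K$-double cosets could equally be replaced by the listed fact that spherical functions are invariant under taking inverses.
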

\begin{proof}
	According to~\cite[Proposition~3.5]{DCH:MultipliersOfTheFourierAlgebrasOfSomeSimpleLieGroupsAndTheirDiscreteSubgroups} the spherical function $\varphi_s$ on $\GG$ is a completely bounded Fourier multiplier of $\GG$ when $|\Re(s)|<\tfrac{\h}{2}$ (this also comes out of Theorem~\ref{maintheorem1}). If $|\Re(s)|>\tfrac{\h}{2}$, then $\varphi_s$ is unbounded and therefore not a completely bounded Fourier multiplier of $\GG$. The same analysis holds for $SU(1,n)$, $Sp(1,n)$ (for $n\geq2$) and $F_{4(-20)}$ using~\cite[Theorem~4.3]{CH:CompletelyBoundedMultipliersOfTheFourierAlgebraOfASimpleLieGroupOfRealRankOne} instead of~\cite[Proposition~3.5]{DCH:MultipliersOfTheFourierAlgebrasOfSomeSimpleLieGroupsAndTheirDiscreteSubgroups}.
	
	We are left with dealing with the case $|\Re(s)|=\frac{m}{2}$. Since $\varphi_{-s}=\varphi_s$ it is enough to consider $\varphi_{s}$ for $s=\tfrac{m}{2}+it$ where $t\neq0$ (for $t=0$, $\varphi_s=\unit$ and therefore a completely bounded Fourier multiplier of $\G$). If $\varphi_s\in\MoA(\G)$, then $\varphi_s|_\LA\in\MoA(\LA)$, but since $\LA$ is abelian $\MoA(\LA)$ equals $\FSA(\LA)$ (cf.~\cite[Corollary~1.8 and Proposition~1.12]{DCH:MultipliersOfTheFourierAlgebrasOfSomeSimpleLieGroupsAndTheirDiscreteSubgroups}). Since $\G$ has real rank one, $\LA$ is isomorphic to $\R$, so we can use the the asymptotic behavior of $\varphi_s$ together with Lemma~\ref{edge2} to conclude that $\varphi_s\notin\MoA(\G)$. Specifically, we use that $\varphi_s$ is bounded together with~\eqref{asymptH} to conclude that
	\begin{equation*}
		\lim_{r\to\infty}\varphi(a_r)e^{-it r}=2^{-i t}\frac{\Gamma\left(\frac{m+m_0}{4}\right)\Gamma\left(\tfrac{m}{2}+i t\right)}{\Gamma\left(\tfrac{m}{2}+i\tfrac{t}{2}\right)\Gamma\left(\frac{m+m_0}{4}+i\tfrac{t}{2}\right)}\neq0.
	\end{equation*}
\end{proof}
\begin{maintheorem}
	\label{uniformlyunbd}
	Let $\G$ be $\GG$, $SU(1,n)$, $Sp(1,n)$ (for $n\geq2$) or $F_{4(-20)}$, then $\|\varphi_s\|_{\MoA(\G)}$ is not uniformly bounded on the strip $\bS$.
\end{maintheorem}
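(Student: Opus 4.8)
The plan is to dispatch $\G=\GG$ directly from Theorem~\ref{maintheorem1} and to reduce the other three families to Theorem~\ref{boundary} by a weak-$*$ compactness argument.

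For $\G=\GG$ the assertion is the corollary following Theorem~\ref{maintheorem1}: fix $t_{0}\in\R\setminus\{0\}$ and let $\sigma\uparrow\tfrac{m}{2}$ along $s=\sigma+it_{0}\in\bS$. In the closed formula of Theorem~\ref{maintheorem1} for $\|\varphi_{s}\|_{\MoA(\G)}$ the factor $\Gamma\big(\tfrac{m}{2}-\sigma\big)$ tends to $+\infty$ while every other $\Gamma$-factor stays in a bounded region of $\C$, so $\|\varphi_{s}\|_{\MoA(\G)}\to+\infty$, which already shows non-uniform boundedness on $\bS$.

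For the other three families I would argue by contradiction. Suppose $\|\varphi_{s}\|_{\MoA(\G)}\le C$ for all $s\in\bS$. Fix $t_{0}\in\R\setminus\{0\}$, choose $\sigma_{n}\uparrow\tfrac{m}{2}$, and set $s_{n}=\sigma_{n}+it_{0}\in\bS$, $s_{\infty}=\tfrac{m}{2}+it_{0}$. For each $g\in\G$ the map $s\mapsto\varphi_{s}(g)$ is continuous (indeed analytic), so $\varphi_{s_{n}}(g)\to\varphi_{s_{\infty}}(g)$; moreover $\|\varphi_{s_{n}}\|_{\infty}=1$ for all $n$ since $s_{n}\in\bS\subset\bSbar$. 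Now use the predual description of $\MoA(\G)$ from~\cite{DCH:MultipliersOfTheFourierAlgebrasOfSomeSimpleLieGroupsAndTheirDiscreteSubgroups}: $\MoA(\G)$ is the dual of a Banach space $Q(\G)$ which is the completion of $\ELL^{1}(\G)$, the duality restricting on $\ELL^{1}(\G)$ to $\dual{\varphi}{f}=\int_{\G}\varphi\,f\,\dd\mu_{\G}$. For every $f\in\ELL^{1}(\G)$, Lebesgue's dominated convergence theorem (dominator $|f|$) gives $\dual{\varphi_{s_{n}}}{f}\to\dual{\varphi_{s_{\infty}}}{f}$; since the sequence $(\varphi_{s_{n}})$ is bounded by $C$ in $\MoA(\G)$ and $\ELL^{1}(\G)$ is dense in $Q(\G)$, it converges weak-$*$ to some $\psi\in\MoA(\G)$ with $\|\psi\|_{\MoA(\G)}\le C$. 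Testing against $f\in\ELL^{1}(\G)$ shows $\psi=\varphi_{s_{\infty}}$ almost everywhere, and both functions are continuous, so $\varphi_{s_{\infty}}=\psi\in\MoA(\G)$. But $|\Re(s_{\infty})|=\tfrac{m}{2}$ while $s_{\infty}\ne\pm\tfrac{m}{2}$, so Theorem~\ref{boundary} forces $\varphi_{s_{\infty}}\notin\MoA(\G)$ --- the desired contradiction.

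The crux is the closedness of the ball of radius $C$ in $\MoA(\G)$ under pointwise limits together with the identification of the pointwise limit with the weak-$*$ limit; this is exactly what the predual of $\MoA(\G)$ delivers, and it is the step one must be careful about. Two alternatives, should the predual route be deemed heavy: (i) argue from Proposition~\ref{Gilbert0}, writing $\varphi_{s_{n}}(y^{-1}x)=\ip{P_{n}(x)}{Q_{n}(y)}$ with $\|P_{n}\|_{\infty}\|Q_{n}\|_{\infty}\le C$ and passing to weak limits of the bounded families $(P_{n})$, $(Q_{n})$ in an ultrapower of the ambient Hilbert space to produce a bounded Herz--Schur realization of $\varphi_{s_{\infty}}$; or (ii) restrict everything to $\LA\cong\R$, use $\MoA(\LA)=\FSA(\R)$ and the asymptotics~\eqref{asymptH}, and invoke Lemma~\ref{edge2} directly --- though this last variant again needs a L\'evy-type continuity statement for the representing measures, which is the same compactness input in disguise.
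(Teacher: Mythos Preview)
Your argument is correct and follows essentially the same route as the paper: assume a uniform bound on $\bS$, pass to a boundary point $s_\infty=\tfrac{m}{2}+it_0$ with $t_0\neq0$ via pointwise convergence and dominated convergence against $\ELL^1(\G)$, and conclude $\varphi_{s_\infty}\in\MoA(\G)$ in contradiction with Theorem~\ref{boundary}. The only cosmetic differences are that the paper treats all four families uniformly (your separate handling of $\GG$ is unnecessary, since the contradiction argument already covers it), and that the paper invokes \cite[Lemma~1.9]{DCH:MultipliersOfTheFourierAlgebrasOfSomeSimpleLieGroupsAndTheirDiscreteSubgroups} directly for the $\sigma(\ELL^\infty,\ELL^1)$-closedness of the $\MoA(\G)$-ball, whereas you derive the same conclusion from the predual $Q(\G)=\Ban_0(\G)$---these are equivalent formulations of the same fact.
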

\begin{proof}
	We will show that if $\|\varphi_s\|_{\MoA(\G)}\leq c$ for $s\in\bS$ for a fixed $c>0$, then also $\|\varphi_s\|_{\MoA(\G)}\leq c$ for $s\in\bSbar$, which contradicts Theorem~\ref{boundary}.
	
	Recall that $s\mapsto\varphi_s(g)$ is analytic and therefore continuous for every fixed $g\in\G$ and that $\|\varphi_s\|_\infty=1$ for $s\in\bSbar$. Let $(s_n)_{n\in\N}\subseteq\bS$ be a sequence converging to $s\in\bSbar$. It follows from Lebesgue's dominated convergence theorem that $\lim_{n\to\infty}\dual{f}{\varphi_{s_n}}=\dual{f}{\varphi_s}$ for any $f\in\ELL^1(\G)$ and therefore that $\varphi_{s_n}$ converges to $\varphi_s$ in the $\sigma(\ELL^\infty(\G),\ELL^1(\G))$ topology. But according to~\cite[Lemma~1.9]{DCH:MultipliersOfTheFourierAlgebrasOfSomeSimpleLieGroupsAndTheirDiscreteSubgroups} the unit ball of $\MoA(\G)$ is $\sigma(\ELL^\infty(\G),\ELL^1(\G))$-closed. Therefore, if we assume that $\|\varphi_s\|_{\MoA(\G)}\leq c$ for every $s\in\bS$, we get that $\|\varphi_s\|_{\MoA(\G)}\leq c$ for every $s\in\bSbar$, which gives the desired contradiction.
\end{proof}

	\section{Coefficients of uniformly bounded representations}
\label{coeff}
Let $\G$ be a locally compact, unimodular group. Denote by $\mu_\G$ a fixed left- and right-invariant Haar measure on $\G$. Recall that convolution on $\ELL^1(\G)$ is given by
\begin{equation*}
	(f*h)(g')=\int_\G f(g)h(g^{-1}g')\dd\mu_\G(g)\qquad(g'\in\G)
\end{equation*}
for $f,h\in\ELL^1(\G)$, and that we have a bilinear form
\begin{equation*}
	\dual{f}{\varphi}=\int_\G f(g)\varphi(g)\dd\mu_\G(g)
\end{equation*}
for $f\in\ELL^1(\G)$ and $\varphi\in\ELL^\infty(\G)$ or $f\in\cont_\cpt(\G)$ and $\varphi\in\cont(\G)$.

For $\alpha\geq1$ we let $\ST_\alpha$ denote the set of functions $\varphi:\G\to\C$ for which there exists a strongly continuous, uniformly bounded representation $(\pi,\Hil)$ of $\G$ and vectors $\xi,\eta\in\Hil$ such that
\begin{equation*}
	\varphi(\y)=\ip{\pi(\y)\xi}{\eta}\qquad(\y\in\G),
\end{equation*}
with $\|\pi\|\leq\alpha$ and $\|\xi\|,\|\eta\|\leq1$.
\begin{lemma}
	\label{seminorm}
	For $\alpha\geq1$ and $f\in\ELL^1(\G)$ put
	\begin{equation*}
		\trip f\trip_\alpha=\sup\setw{|\dual{f}{\varphi}| }{ \varphi\in\ST_\alpha }.
	\end{equation*}
	Then $\trip\cdot\trip_\alpha$ is a Banach algebra semi-norm on the Banach convolution algebra $\ELL^1(\G)$.
\end{lemma}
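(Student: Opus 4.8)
The plan is to check three things in turn: that $\trip\cdot\trip_\alpha$ is finite-valued (hence a genuine function into $[0,\infty)$, and---being dominated by the $\ELL^1$-norm---continuous on the Banach convolution algebra $\ELL^1(\G)$), that it obeys the seminorm axioms, and that it is submultiplicative for convolution. Only the last of these requires any work; the first two are essentially formal.

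For finiteness and the seminorm axioms: each $\varphi\in\ST_\alpha$ is of the form $\varphi(g)=\ip{\pi(g)\xi}{\eta}$ with $\|\pi\|\le\alpha$ and $\|\xi\|,\|\eta\|\le1$, so $\varphi$ is continuous (by strong continuity of $\pi$) and $\|\varphi\|_\infty\le\alpha$; hence $|\dual{f}{\varphi}|\le\alpha\|f\|_1$ for every such $\varphi$, and therefore $\trip f\trip_\alpha\le\alpha\|f\|_1<\infty$. Non-negativity is clear; absolute homogeneity follows by pulling the scalar through the linear functional $f\mapsto\dual{f}{\varphi}$ before taking the supremum; and the triangle inequality follows from $|\dual{f+h}{\varphi}|\le|\dual{f}{\varphi}|+|\dual{h}{\varphi}|$ together with subadditivity of the supremum. (The statement only asks for a seminorm, so I do not need to show that $\trip f\trip_\alpha=0$ forces $f=0$.)

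The substance is submultiplicativity, $\trip f*h\trip_\alpha\le\trip f\trip_\alpha\,\trip h\trip_\alpha$. I would fix $\varphi\in\ST_\alpha$ as above and unravel $\dual{f*h}{\varphi}$: using $|f(g)h(g^{-1}g')|$ as an integrable majorant (its double integral is $\|f\|_1\|h\|_1$ by left-invariance of $\mu_\G$), Fubini's theorem, the substitution $g'=gu$ in the inner variable (left-invariance again), the identity $\pi(gu)=\pi(g)\pi(u)$, and pulling $\pi(g)$ out of the inner integral give
\begin{equation*}
	\dual{f*h}{\varphi}=\int_\G f(g)\,\ip{\pi(g)\xi_h}{\eta}\,\dd\mu_\G(g),\qquad\xi_h:=\int_\G h(u)\,\pi(u)\xi\,\dd\mu_\G(u),
\end{equation*}
where $\xi_h$ (the integrated representation applied to $\xi$) is a well-defined vector of $\Hil$ since $\omega\mapsto\int_\G h(u)\ip{\pi(u)\xi}{\omega}\,\dd\mu_\G(u)$ is a bounded conjugate-linear functional. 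The key observation is that for every unit vector $\omega\in\Hil$ the function $u\mapsto\ip{\pi(u)\xi}{\omega}$ lies in $\ST_\alpha$, so $|\ip{\xi_h}{\omega}|=|\dual{h}{\ip{\pi(\cdot)\xi}{\omega}}|\le\trip h\trip_\alpha$, whence $\|\xi_h\|\le\trip h\trip_\alpha$. If $\xi_h=0$ the inequality is trivial; otherwise $g\mapsto\ip{\pi(g)(\xi_h/\|\xi_h\|)}{\eta}$ again lies in $\ST_\alpha$, so
\begin{equation*}
	|\dual{f*h}{\varphi}|=\|\xi_h\|\;\Big|\dual{f}{\ip{\pi(\cdot)(\xi_h/\|\xi_h\|)}{\eta}}\Big|\le\trip h\trip_\alpha\,\trip f\trip_\alpha,
\end{equation*}
and taking the supremum over $\varphi\in\ST_\alpha$ finishes it.

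The routine ingredients will be the weak integral defining $\xi_h$, the use of Fubini's theorem, and the change of variables $g'=gu$. The one step I expect to require any real thought is the self-referential estimate $\|\pi(h)\xi\|\le\trip h\trip_\alpha$ for $\|\xi\|\le1$ (here $\pi(h)$ is the integrated representation): this is what replaces the naive bound $\|\pi(h)\xi\|\le\alpha\|h\|_1$ and is precisely what gives submultiplicativity rather than the weaker $\trip f*h\trip_\alpha\le\alpha\|h\|_1\,\trip f\trip_\alpha$.
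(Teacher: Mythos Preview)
Your proof is correct and follows essentially the same route as the paper. The paper packages the argument slightly more cleanly by first observing that $\trip f\trip_\alpha=\sup_\pi\|\pi(f)\|$, the supremum being over all strongly continuous representations with $\|\pi\|\le\alpha$, after which submultiplicativity is immediate from $\pi(f*h)=\pi(f)\pi(h)$; your computation of $\|\xi_h\|=\|\pi(h)\xi\|\le\trip h\trip_\alpha$ followed by the bound on $|\dual{f}{\ip{\pi(\cdot)(\xi_h/\|\xi_h\|)}{\eta}}|$ is exactly this identity unwound by hand.
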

\begin{proof}
	The only non-trivial part is to show
	\begin{equation*}
		\trip f*h\trip_\alpha\leq\trip f\trip_\alpha\trip h\trip_\alpha\qquad(f,h\in \ELL^1(\G)).
	\end{equation*}
	Assume that
	\begin{equation*}
		\varphi(\y)=\ip{\pi(\y)\xi}{\eta}\qquad(\y\in \G)
	\end{equation*}
	for some strongly continuous, uniformly bounded representation $(\pi,\Hil)$ of $\G$ with $\|\pi\|\leq\alpha$ and vectors $\xi,\eta\in\Hil$ with $\|\xi\|,\|\eta\|\leq1$, i.e., assume that $\varphi\in\ST_\alpha$. It follows that
	\begin{equation*}
		\dual{f}{\varphi}=\int_\G f(\y)\varphi(\y)\dd\mu_\G(\y)=\int_\G f(\y)\ip{\pi(\y)\xi}{\eta}\dd\mu_\G(\y)=\ip{\pi(f)\xi}{\eta}
	\end{equation*}
	for $f\in \ELL^1(\G)$. From this and
	\begin{equation*}
		\|\pi(f)\|=\sup\setw{|\ip{\pi(f)\xi}{\eta}|}{\xi,\eta\in\Hil,\, \|\xi\|,\|\eta\|\leq1}\qquad(f\in \ELL^1(\G))
	\end{equation*}
	it follows that $\trip f\trip_\alpha$ is the supremum of $\|\pi(f)\|$ taken over all strongly continuous, uniformly bounded representations $(\pi,\Hil)$ of $\G$ with $\|\pi\|\leq\alpha$. The Banach algebra property now follows readily, since
	\begin{equation*}
		\pi(f*h)=\pi(f)\pi(h)\qquad(f,h\in \ELL^1(\G)).\qedhere
	\end{equation*}
\end{proof}
\begin{lemma}
	\label{ClosedConvex}
	For $\alpha\geq1$, $\ST_\alpha$ is a ${\MoA(\G)}$-norm closed convex subset of $\MoA(\G)$ with $\|\varphi\|_{\MoA(\G)}\leq\alpha^2$ for $\varphi\in\ST_\alpha$.
\end{lemma}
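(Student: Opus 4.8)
The norm estimate and convexity are straightforward. If $\varphi\in\ST_\alpha$, say $\varphi(\y)=\ip{\pi(\y)\xi}{\eta}$ with $\|\pi\|\le\alpha$ and $\|\xi\|,\|\eta\|\le1$, then by the result recalled in the introduction (\cite[Theorem~2.2]{DCH:MultipliersOfTheFourierAlgebrasOfSomeSimpleLieGroupsAndTheirDiscreteSubgroups}, equivalently Proposition~\ref{Gilbert0}) one has $\varphi\in\MoA(\G)$ with $\|\varphi\|_{\MoA(\G)}\le\|\pi\|^2\|\xi\|\|\eta\|\le\alpha^2$, so $\ST_\alpha\subseteq\MoA(\G)$. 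For convexity, given $\varphi_i(\y)=\ip{\pi_i(\y)\xi_i}{\eta_i}_{\Hil_i}\in\ST_\alpha$ for $i=0,1$ and $\lambda\in[0,1]$, the direct sum $\pi_0\oplus\pi_1$ on $\Hil_0\oplus\Hil_1$ is again strongly continuous and uniformly bounded with norm $\le\alpha$, and with $\xi=(\sqrt{1-\lambda}\,\xi_0,\sqrt{\lambda}\,\xi_1)$ and $\eta=(\sqrt{1-\lambda}\,\eta_0,\sqrt{\lambda}\,\eta_1)$ one has $\|\xi\|,\|\eta\|\le1$ and $\ip{(\pi_0\oplus\pi_1)(\y)\xi}{\eta}=(1-\lambda)\varphi_0(\y)+\lambda\varphi_1(\y)$, so $(1-\lambda)\varphi_0+\lambda\varphi_1\in\ST_\alpha$.

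The real content is the $\MoA(\G)$-norm closedness, which I would approach via an ultraproduct of the representing data. Suppose $\varphi_n\in\ST_\alpha$ and $\|\varphi_n-\varphi\|_{\MoA(\G)}\to0$; since the $\MoA$-norm dominates the supremum norm, $\varphi_n\to\varphi$ uniformly, so $\varphi$ is continuous and bounded and (by completeness) lies in $\MoA(\G)$. Write $\varphi_n(\y)=\ip{\pi_n(\y)\xi_n}{\eta_n}_{\Hil_n}$ with $\|\pi_n\|\le\alpha$ and $\|\xi_n\|,\|\eta_n\|\le1$, fix a free ultrafilter $\omega$ on $\N$, and form the ultraproduct Hilbert space $\Hil_\omega=(\prod_n\Hil_n)_\omega$. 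Since each $\pi_n(\y)$ is invertible with $\|\pi_n(\y)^{\pm1}\|\le\alpha$, the formula $\pi_\omega(\y)[x_n]_\omega=[\pi_n(\y)x_n]_\omega$ defines an invertible operator on $\Hil_\omega$, and $\pi_\omega$ is a homomorphism of $\G$ into the invertibles of $\linbeg(\Hil_\omega)$ with $\sup_\y\|\pi_\omega(\y)\|\le\alpha$; taking $\xi=[\xi_n]_\omega$ and $\eta=[\eta_n]_\omega$ (of norm $\le1$) and using $\varphi_n\to\varphi$ pointwise, we get $\ip{\pi_\omega(\y)\xi}{\eta}=\lim_\omega\varphi_n(\y)=\varphi(\y)$ for every $\y$. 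Thus $\varphi$ is realized as a coefficient of the uniformly bounded representation $\pi_\omega$ with vectors in the unit ball; the one obstruction to finishing is that $\pi_\omega$ need not be \emph{strongly continuous}, and this is the heart of the matter.

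To repair continuity I would pass to the closed $\pi_\omega$-invariant subspace $\Hil_\omega^{\mathrm{c}}=\setw{v\in\Hil_\omega}{\y\mapsto\pi_\omega(\y)v\text{ is norm-continuous}}$ (closedness is a routine $\epsilon/3$-argument, invariance is clear), so that $\pi_\omega^{\mathrm{c}}:=\pi_\omega|_{\Hil_\omega^{\mathrm{c}}}$ is strongly continuous and uniformly bounded with norm $\le\alpha$. The point is that enough good vectors already sit in $\Hil_\omega^{\mathrm{c}}$: for a bounded approximate identity $(f_i)$ of $\ELL^1(\G)$ the smoothed vectors $\xi^{(i)}:=[\pi_n(f_i)\xi_n]_\omega$ lie in $\Hil_\omega^{\mathrm{c}}$, because $\pi_n(\y)\pi_n(f_i)=\pi_n\!\bigl(f_i(\y^{-1}\,\cdot\,)\bigr)$ shows that $\y\mapsto\pi_n(\y)\pi_n(f_i)\xi_n$ has modulus of continuity at most $\alpha\,\|f_i(\y^{-1}\,\cdot\,)-f_i(\y_0^{-1}\,\cdot\,)\|_{\ELL^1(\G)}$, a bound uniform in $n$ thanks to continuity of translation on $\ELL^1(\G)$. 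Moreover $\ip{\pi_\omega(\y)\xi^{(i)}}{\eta}=\int_\G f_i(\y^{-1}h)\varphi(h)\,\dd\mu_\G(h)$ (same computation, passing $\varphi_n\to\varphi$ boundedly through the integral), which tends to $\varphi$ uniformly as $i$ runs along the approximate identity, because coefficients of uniformly bounded representations — hence their uniform limits, in particular $\varphi$ — are right uniformly continuous. Since $\pi_\omega(\y)\xi^{(i)}\in\Hil_\omega^{\mathrm{c}}$, one may replace $\eta$ by its orthogonal projection onto $\Hil_\omega^{\mathrm{c}}$ without changing these pairings, and a weak cluster point of the bounded net $(\xi^{(i)})_i$ in the Hilbert space $\Hil_\omega^{\mathrm{c}}$ then exhibits $\varphi$ as a coefficient of the strongly continuous representation $\pi_\omega^{\mathrm{c}}$.

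The delicate endgame — where I expect the real difficulty to lie — is the norm bookkeeping at this last step: $\|\pi_n(f_i)\|$ can be as large as $\alpha$, so $\|\xi^{(i)}\|$ is only controlled by $\alpha$, which a priori yields $\varphi\in\alpha\,\ST_\alpha$ rather than $\varphi\in\ST_\alpha$. Reconciling the need for strong continuity (which favours fixed, spread-out kernels $f_i$) with the constraint $\|\xi\|,\|\eta\|\le1$ (which favours sharply concentrated kernels) is the genuine obstacle; I would try to close the gap either by carefully interleaving the smoothing parameter with the ultrafilter limit — using that $\pi_n(f_i)\xi_n\to\xi_n$ for each fixed $n$ — or, more robustly, by replacing the ad hoc ultraproduct with the Banach algebra completion of $\ELL^1(\G)$ under the seminorm $\trip\cdot\trip_\alpha$ of Lemma~\ref{seminorm}, together with a GNS-type construction adapted to coefficients, so as to obtain the limiting representation with unit-ball vectors directly.
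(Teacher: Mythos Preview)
Your treatment of the norm bound and convexity is correct and matches the paper verbatim. The ultraproduct opening for closedness is also the same as the paper's. The divergence, and the gap you yourself flag, is in how to restore strong continuity without losing the constraint $\|\xi\|,\|\eta\|\le1$.

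Your smoothing route --- replacing $\xi$ by $\xi^{(i)}=[\pi_n(f_i)\xi_n]_\omega$ --- does exactly what you fear: it costs a factor of $\alpha$ in the norm of the vector, and neither of the two fixes you sketch is convincing. Interleaving the approximate identity with the ultrafilter does not help, because the rate at which $\pi_n(f_i)\xi_n\to\xi_n$ depends on $n$ (there is no uniform modulus of continuity across the $\pi_n$), so one cannot diagonalise. The GNS idea is too vague as stated.

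The paper avoids the norm problem entirely by \emph{not} smoothing $\xi$ or $\eta$. Instead, starting from the ultraproduct triple $(\pi,\xi,\eta)$, it passes first to $\Hil'=[\pi(\G)\xi]$ (restricting $\pi$ and projecting $\eta$, which can only shrink it), and then to $\Hil''=[\pi'(\G)^*\eta']\subseteq\Hil'$ (projecting $\xi'$). After this two-step cyclic/co-cyclic reduction one has a representation $(\pi'',\Hil'')$ with $[\pi''(\G)\xi'']=[\pi''(\G)^*\eta'']=\Hil''$, vectors still in the unit ball, and the coefficient still equal to $\varphi$. The point is that now \emph{weak} continuity of $\pi''$ follows for free from the continuity of $\varphi$: the functions $g\mapsto\varphi(g_1gg_2)=\ip{\pi''(g)\pi''(g_2)\xi''}{\pi''(g_1)^*\eta''}$ are continuous for all $g_1,g_2$, and by bicyclicity this suffices. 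The upgrade from weak to strong continuity is then the standard argument (smoothing by $\ELL^1(\G)$ shows $\pi''(\ELL^1(\G))\Hil''$ is dense and consists of strongly continuous vectors), but here the smoothing is used only to prove density, not to replace $\xi''$ or $\eta''$, so no norm is lost.

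In short: the missing idea is the bicyclic reduction that trades the analytic problem (smoothing vectors) for an algebraic one (cyclicity), after which continuity of $\varphi$ itself does the work.
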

\begin{proof}
	That $\ST_\alpha$ is a subset of $\MoA(\G)$ with $\|\varphi\|_{\MoA(\G)}\leq\alpha^2$ for $\varphi\in \ST_\alpha$ can be found in~\cite[Theorem~2.2]{DCH:MultipliersOfTheFourierAlgebrasOfSomeSimpleLieGroupsAndTheirDiscreteSubgroups} (it also follows from Proposition~\ref{Gilbert0}). The convex part is straight forward: Assume we have strongly continuous, uniformly bounded representations $(\pi_i,\Hil_i)$ of $\G$ and vectors $\xi_i,\eta_i\in\Hil_i$, such that
	\begin{equation*}
		\varphi_i(\y)=\ip{\pi_i(\y)\xi_i}{\eta_i}_{\Hil_i}\qquad(\y\in \G),
	\end{equation*}
	with $\|\pi_i\|\leq\alpha$ and $\|\xi_i\|,\|\eta_i\|\leq1$ for $i=1,2$. For $0<t<1$ we find
	\begin{equation*}
		(1-t)\varphi_1(\y)+t\varphi_2(\y)=\ip{\pi(\y)\xi}{\eta}_\Hil\qquad(\y\in \G),
	\end{equation*}
	where $(\pi,\Hil)$ is the strongly continuous, uniformly bounded representation of $\G$ given by $\pi=\pi_1\oplus\pi_2$ and $\Hil=\Hil_1\oplus\Hil_2$, while
	\begin{equation*}
		\xi=(1-t)^{\frac{1}{2}}\xi_1\oplus t^{\frac{1}{2}}\xi_2,
	\end{equation*}
	\begin{equation*}
		\eta=(1-t)^{\frac{1}{2}}\eta_1\oplus t^{\frac{1}{2}}\eta_2.
	\end{equation*}
	It is easily verified that $\|\pi\|\leq\alpha$ and $\|\xi\|,\|\eta\|\leq1$, which finishes the convex part.

	The closure part is proved using ultraproducts of Hilbert spaces. Let $\varphi$ belong to the closure of $\ST_\alpha$, and choose a sequence $(\varphi_n)_{n\in\N}$ from $\ST_\alpha$ such that
	\begin{equation*}
		\lim_{n\to\infty}\|\varphi_n-\varphi\|_{\MoA(\G)}=0.
	\end{equation*}
	This implies that
	\begin{equation*}
		\lim_{n\to\infty}|\varphi_n(\y)-\varphi(\y)|=0\qquad(\y\in \G),
	\end{equation*}
	since $\|\cdot\|_\infty\leq\|\cdot\|_{\MoA(\G)}$. For each $n\in\N$ choose strongly continuous, uniformly bounded representations $(\pi_n,\Hil_n)$ of $\G$ and vectors $\xi_n,\eta_n\in\Hil_n$ such that
	\begin{equation*}
		\varphi_n(\y)=\ip{\pi_n(\y)\xi_n}{\eta_n}_{\Hil_n}\qquad(\y\in \G),
	\end{equation*}
	with $\|\pi_n\|\leq\alpha$ and $\|\xi_n\|,\|\eta_n\|\leq1$. Let $\sU$ be an ultrafilter on $\N$ containing all sets $\setw{n\in \N}{n\geq n_0}$ for every $n_0\in \N$, that is, $\sU$ is a free ultrafilter. Let $\Hil$ denote the corresponding ultraproduct of the Hilbert spaces $(\Hil_n)_{n\in \N}$. The elements of $\Hil$ are represented by bounded families $(\zeta_n)_{n\in \N}$, where $\zeta_n\in\Hil_n$, and where two families $(\zeta_n)_{n\in \N}$ and $(\zeta'_n)_{n\in \N}$ defines the same element in $\Hil$ if
	\begin{equation*}
		\lim_{\sU}\|\zeta_n-\zeta'_n\|=0.
	\end{equation*}
	The inner product on $\Hil$ is given by
	\begin{equation*}
		\ip{(\zeta_n)_{n\in \N}}{(\zeta'_n)_{n\in \N}}_\Hil=\lim_\sU\ip{\zeta_n}{\zeta'_n}_{\Hil_n}\qquad((\zeta_n)_{n\in \N},(\zeta'_n)_{n\in \N}\in\Hil).
	\end{equation*}
	Put
	\begin{equation*}
		\xi=(\xi_n)_{n\in \N},\qquad \eta=(\eta_n)_{n\in \N},
	\end{equation*}
	regarded as elements in $\Hil$, and let $(\pi,\Hil)$ be the representation of $\G$ defined by
	\begin{equation*}
		\pi(\y)(\zeta_n)_{n\in \N}=(\pi_n(\y)\zeta_n)_{n\in \N}\qquad(\y\in\G,\,(\zeta_n)_{n\in \N}\in\Hil).
	\end{equation*}
	Then
	\begin{equation*}
		\varphi(\y)=\lim_{n\to\infty}\ip{\pi_n(\y)\xi_n}{\eta_n}_{\Hil_n}=\lim_\sU\ip{\pi_n(\y)\xi_n}{\eta_n}_{\Hil_n}=\ip{\pi(\y)\xi}{\eta}_\Hil
	\end{equation*}
	for $\y\in\G$.
	Furthermore, $\|\pi\|=\sup_{n\in \N}\|\pi_n\|\leq\alpha$ and $\|\xi\|,\|\eta\|\leq1$. Unfortunately, $\pi$ is not necessarily strongly continuous, which we will now remedy.
	
	Let $\Hil'$ be the subspace of $\Hil$ given by
	\begin{equation*}
		\Hil'=\overline{\sspan}{\setw{\pi(g)\xi}{g\in \G}},
	\end{equation*}
	or just $\Hil'=[\pi(\G)\xi]$ for short. Since $\Hil'$ is $\pi(\G)$-invariant, we can define a new representation $(\pi',\Hil')$ of $\G$ by letting
	\begin{equation*}
		\pi'(g)=\pi(g)|_{\Hil'}\qquad(g\in \G).
	\end{equation*}
	Put
	\begin{equation*}
		\xi'=\xi\quad\mbox{and}\quad\eta'=P_{\Hil'}\eta,
	\end{equation*}
	where $P_{\Hil'}$ is the orthogonal projection of $\Hil$ onto $\Hil'$. We find that
	\begin{equation*}
		\varphi(g)=\ip{\pi(g)\xi}{\eta}_{\Hil}=\ip{\pi(g)\xi'}{\eta'}_{\Hil'}=\ip{\pi'(g)\xi'}{\eta'}_{\Hil'}\qquad(g\in \G),
	\end{equation*}
	since $\pi(g)\xi'\in\Hil'$ for every $g\in \G$. Furthermore, $\|\pi'\|\leq\|\pi\|\leq\alpha$, $\|\xi'\|\leq1$, $\|\eta'\|\leq1$ and $[\pi'(\G)\xi']=[\pi(\G)\xi]=\Hil'$.
	
	Let $\Hil''$ be the subspace of $\Hil'$ given by
	\begin{equation*}
		\Hil''=[\pi'(\G)^*\eta'].
	\end{equation*}	
	Since $\Hil''$ is $\pi'(\G)^*$-invariant, we can define a new representation $(\pi'',\Hil'')$ of $\G$ by letting
	\begin{equation*}
		\pi''(g)^*=\pi'(g)^*|_{\Hil''}\qquad(g\in \G).
	\end{equation*}
	Put
	\begin{equation*}
		\xi''=P_{\Hil''}\xi'\quad\mbox{and}\quad\eta''=\eta',
	\end{equation*}
	where $P_{\Hil''}$ is the orthogonal projection of $\Hil'$ onto $\Hil''$. We find that
	\begin{eqnarray*}
		\varphi(g)
		& = & \ip{\pi'(g)\xi'}{\eta'}_{\Hil'}=\ip{\xi'}{\pi'(g)^*\eta'}_{\Hil'}\\
		& = & \ip{\xi''}{\pi'(g)^*\eta''}_{\Hil''}=\ip{\xi''}{\pi''(g)^*\eta''}_{\Hil''}=\ip{\pi''(g)\xi''}{\eta''}_{\Hil''}
	\end{eqnarray*}
	for $g\in \G$, since $\pi'(g)^*\eta''\in\Hil''$. Furthermore, $\|\pi''\|\leq\|\pi'\|\leq\alpha$, $\|\xi''\|\leq1$, $\|\eta''\|\leq1$ and $[\pi''(\G)^*\eta'']=[\pi'(\G)^*\eta']=\Hil''$. Finally,
	\begin{equation*}
		\pi''(g)P_{\Hil''}=P_{\Hil''}\pi'(g)\qquad(g\in \G),
	\end{equation*}
	considered as bounded operators from $\Hil'$ to $\Hil''$, since
	\begin{equation*}
		\pi''(g)P_{\Hil''}=(\pi''(g)^*)^*P_{\Hil''}=(\pi'(g)^*|_{\Hil''})^*P_{\Hil''}\qquad(g\in \G),
	\end{equation*}
	and for arbitrary $\zeta'\in\Hil'$ and $\zeta''\in\Hil''$,
	\begin{eqnarray*}
		\ip{(\pi'(g)^*|_{\Hil''})^*P_{\Hil''}\zeta'}{\zeta''}_{\Hil''}
		& = & \ip{P_{\Hil''}\zeta'}{\pi'(g)^*\zeta''}_{\Hil''}\\
		& = & \ip{\zeta'}{\pi'(g)^*\zeta''}_{\Hil''}\\
		& = & \ip{\pi'(g)\zeta'}{\zeta''}_{\Hil''}\\
		& = & \ip{P_{\Hil''}\pi'(g)\zeta'}{\zeta''}_{\Hil''},
	\end{eqnarray*}
	where we used that $\pi'(g)^*\zeta''\in\Hil''$. It now follows that
	\begin{equation*}
		[\pi''(\G)\xi'']=P_{\Hil''}[\pi'(\G)\xi']=P_{\Hil''}\Hil'=\Hil''.
	\end{equation*}
	
	Since $\varphi\in\MoA(\G)$ is continuous, the function
	\begin{equation*}
		g\mapsto \varphi(g_1 g g_2)=\ip{\pi''(g)\pi''(g_2)\xi''}{\pi''(g_1)^*\eta''}_{\Hil''}\qquad(g\in \G)
	\end{equation*}
	is also continuous for all $g_1,g_2\in \G$. Using this together with $\|\pi''\|\leq\alpha$, $[\pi''(\G)^*\eta'']=\Hil''$ and $[\pi''(\G)\xi'']=\Hil''$ we find that $\pi''$ is \emph{weakly continuous}, i.e., continuous with respect to the weak operator topology on $\linbeg(\Hil'')$. Since $\pi''$ is also uniformly bounded, strong continuity follows automatically by the following argument:
	
	Since $(\pi'',\Hil'')$ is a weakly continuous uniformly bounded representation of $\G$, we can extend it to a representation of the involutive Banach convolution algebra $\ELL^1(\G)$ (this representation will also be called $(\pi'',\Hil'')$, letting the context clarify which one we mean) by setting
	\begin{equation*}
		\pi''(f)=\int_\G f(g)\pi''(g)\dd\mu_\G(g)\qquad(f\in \ELL^1(\G)),
	\end{equation*}
	where the integral converges in the weak operator topology. It is readily checked that
	\begin{equation*}
		\|\pi''(f)\|\leq\alpha\|f\|_1\qquad(f\in \ELL^1(\G)),
	\end{equation*}
	and
	\begin{equation*}
		\pi''(g)\pi''(f)=\pi''(\lambda(g) f)\qquad(g\in \G, f\in \ELL^1(\G)),
	\end{equation*}
	where $\lambda:\G\to\linbeg(\ELL^1(\G))$ is the \emph{left regular representation} given by
	\begin{equation*}
		(\lambda(g)f)(g')=f(g^{-1}g')\qquad(g,g'\in \G,\, f\in\ELL^1(\G)).
	\end{equation*}
	For $f\in \ELL^1(\G)$, $\zeta\in\Hil''$ and $g_0,g\in \G$ we have that
	\begin{equation*}
		\|\pi''(g)\pi''(f)\zeta-\pi''(g_0)\pi''(f)\zeta\|
		\leq\alpha\|\lambda(g)f-\lambda(g_0)f\|_1\|\zeta\|,
	\end{equation*}
	which converges to zero as $g$ converges to $g_0$ by strong continuity of the left regular representation. We put
	\begin{equation*}
		\Hil_0''=\sspan\setw{\pi''(f)\zeta}{f\in \ELL^1(\G),\, \zeta\in\Hil''},
	\end{equation*}
	and conclude that the mapping
	\begin{equation*}
		g\mapsto\pi''(g)\zeta_0\qquad(\zeta_0\in\Hil_0'')
	\end{equation*}
	is continuous on $\G$. We will now show that $\Hil_0''$ is norm dense in $\Hil''$. Let $(f_j)_{j\in J}$ be an approximate unit in $\ELL^1(\G)$ (considered as a Banach convolution algebra), that is, $(f_j)_{j\in J}$ is a net of non-negative norm $1$ functions in $\ELL^1(\G)$, such that for every neighborhood $V$ of $e$, there exists $j_V\in J$ with $\support(f_j)\subseteq V$ for all $j\geq j_V$. Using weak continuity of the representation $(\pi'',\Hil'')$ of $\G$ it is easily seen that $(\pi''(f_j))_{j\in J}$ converges to the identity operator $I\in\linbeg(\Hil'')$ in the weak operator topology. Using that
	\begin{equation*}
		\setw{\pi''(f)}{ f\in \ELL^1(\G) }
	\end{equation*}
	is a convex subset of $\linbeg(\Hil'')$, and therefore has identical closure in the weak- and strong operator topologies, we find a net $(f_j)_{j\in J'}$ in $\ELL^1(\G)$, such that $(\pi''(f_j))_{j\in J'}$ converges to $I$ in the strong operator topology. From this we conclude that $\Hil_0''$ is norm dense in $\Hil''$. Strong continuity of the representation $(\pi'',\Hil'')$ of $\G$ now follows from its uniform boundedness.
\end{proof}
The following realization of the predual of $\MoA(\G)$ is found in~\cite[Proposition~1.10~a)]{DCH:MultipliersOfTheFourierAlgebrasOfSomeSimpleLieGroupsAndTheirDiscreteSubgroups}. If $\Ban_0(\G)$ denotes the completion of $\ELL^1(\G)$ with respect to the ${\Ban_0(\G)}$-norm given by
\begin{equation*}
	\|f\|_{\Ban_0(\G)}=\sup\setw{|\dual{f}{\varphi}|}{\varphi\in\MoA(\G),\,\|\varphi\|_{\MoA(\G)}\leq1}\qquad(f\in\ELL^1(\G)),
\end{equation*}
then the dual space of $\Ban_0(\G)$ is $\MoA(\G)$, and the ${\MoA(\G)}$-norm is the corresponding dual norm. Note that $\|\varphi\|_\infty\leq\|\varphi\|_{\MoA(\G)}$ for $\varphi\in\MoA(\G)$ so it follows that $\|f\|_{\Ban_0(\G)}\leq\|f\|_1$ for $f\in\ELL^1(\G)$.
\begin{proposition}
	\label{cExist}
	If all completely bounded Fourier multipliers of $\G$ are coefficients of strongly continuous, uniformly bounded representations, then the ${\Ban_0(\G)}$-norm is equivalent to the Banach algebra semi-norm $\trip\cdot\trip_\alpha$ for some $\alpha\geq1$. In particular, there exists a constant $c>0$ such that
	\begin{equation*}
		\|f*h\|_{\Ban_0(\G)}\leq c\|f\|_{\Ban_0(\G)}\|h\|_{\Ban_0(\G)}\qquad(f,h\in\ELL^1(\G)).
	\end{equation*}
\end{proposition}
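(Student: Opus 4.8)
The plan is to run a Baire category argument inside the Banach space $\MoA(\G)$ (complete, being the dual of $\Ban_0(\G)$ as recalled just above), using the hypothesis to write it as a countable union of closed convex balanced sets built from the $\ST_n$, and then to read off the desired norm equivalence by testing $f\in\ELL^1(\G)$ against a small $\MoA(\G)$-ball that ends up contained in one of these sets. I first record the inequality that holds for \emph{every} $\alpha\geq1$ with no hypothesis: since $\|\varphi\|_{\MoA(\G)}\leq\alpha^2$ for $\varphi\in\ST_\alpha$ (Lemma~\ref{ClosedConvex}), we get $|\dual{f}{\varphi}|\leq\alpha^2\|f\|_{\Ban_0(\G)}$ for such $\varphi$, hence $\trip f\trip_\alpha\leq\alpha^2\|f\|_{\Ban_0(\G)}$.

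Next, the covering. For $n\in\N$ put $F_n=n^2\,\ST_n$. Each $F_n$ is closed, convex, and balanced: convexity and closedness come from Lemma~\ref{ClosedConvex}, and $\ST_n$ is balanced because replacing $\xi$ by $\lambda\xi$ with $|\lambda|\leq1$ keeps the norm bound; moreover $F_n\subseteq F_{n+1}$ since $\ST_n\subseteq\ST_{n+1}$ and $\ST_{n+1}$ is balanced. I claim the hypothesis forces $\MoA(\G)=\bigcup_{n}F_n$. Indeed, given $\varphi\in\MoA(\G)$, by hypothesis $\varphi(g)=\ip{\pi(g)\xi}{\eta}$ for a strongly continuous, uniformly bounded $(\pi,\Hil)$; replacing $\xi,\eta$ by $t\xi,t^{-1}\eta$ with $t=(\|\eta\|/\|\xi\|)^{1/2}$ we may assume $\|\xi\|=\|\eta\|=M:=(\|\xi\|\|\eta\|)^{1/2}$, and then $\varphi=M^2\ip{\pi(\cdot)(\xi/M)}{\eta/M}\in M^2\ST_{\|\pi\|}\subseteq F_n$ for any integer $n\geq\max(\|\pi\|,M)$. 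By the Baire category theorem some $F_{n_0}$ has nonempty interior in $\MoA(\G)$; being closed, convex and balanced, $F_{n_0}$ then contains $0$ as an interior point (if $x_0+B(0,r)\subseteq F_{n_0}$ then $-x_0+B(0,r)\subseteq F_{n_0}$ by balancedness and $B(0,r)=\tfrac12(x_0+B(0,r))+\tfrac12(-x_0+B(0,r))\subseteq F_{n_0}$ by convexity), so $\{\varphi:\|\varphi\|_{\MoA(\G)}\leq\delta\}\subseteq F_{n_0}$ for some $\delta>0$, i.e.\ $\{\varphi:\|\varphi\|_{\MoA(\G)}\leq\delta/n_0^2\}\subseteq\ST_{n_0}$.

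Consequently, for every $f\in\ELL^1(\G)$,
\[
\trip f\trip_{n_0}\;\geq\;\sup\setw{|\dual{f}{\varphi}|}{\|\varphi\|_{\MoA(\G)}\leq\delta/n_0^2}\;=\;\frac{\delta}{n_0^2}\,\|f\|_{\Ban_0(\G)},
\]
and combined with $\trip f\trip_{n_0}\leq n_0^2\|f\|_{\Ban_0(\G)}$ from the first paragraph this gives the equivalence of the $\Ban_0(\G)$-norm with the Banach algebra semi-norm $\trip\cdot\trip_{n_0}$, so one may take $\alpha=n_0$. For the last assertion, feed this equivalence into the submultiplicativity of $\trip\cdot\trip_{n_0}$ from Lemma~\ref{seminorm}:
\[
\frac{\delta}{n_0^2}\,\|f*h\|_{\Ban_0(\G)}\leq\trip f*h\trip_{n_0}\leq\trip f\trip_{n_0}\trip h\trip_{n_0}\leq n_0^4\,\|f\|_{\Ban_0(\G)}\|h\|_{\Ban_0(\G)},
\]
so $c=n_0^6/\delta$ works.

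I expect the one genuinely delicate point to be the covering step: a coefficient of a uniformly bounded representation may come with vectors of large norm and so need not lie in any $\ST_n$, which is precisely why one works with the dilated sets $n^2\ST_n$ and normalizes $\|\xi\|=\|\eta\|$ before choosing $n$. Everything else—completeness of $\MoA(\G)$, the ``interior point of a closed convex balanced set forces $0$ to be interior'' observation, and the tracking of constants—is routine.
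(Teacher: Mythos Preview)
Your proof is correct and follows essentially the same Baire-category route as the paper: cover $\MoA(\G)$ by countably many closed convex symmetric dilates of the sets $\ST_n$ (the paper uses the two-parameter family $m\ST_n$, you use the one-parameter diagonal $F_n=n^2\ST_n$), extract an interior point, and push it to $0$ to get a small $\MoA(\G)$-ball inside $\ST_{n_0}$, which yields the norm equivalence and the submultiplicativity constant via Lemma~\ref{seminorm}. The only differences are cosmetic---your indexing gives $c=n_0^6/\delta$ instead of the paper's $n^4/\delta$, and you spell out the balancedness of $\ST_n$ and the normalization $\|\xi\|=\|\eta\|$ a bit more explicitly (note the trivial case $\xi=0$ or $\eta=0$ should be mentioned, but then $\varphi=0\in\ST_1$).
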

\begin{proof}
	The assumption can be reformulated as
	\begin{equation*}
		\MoA(\G)=\bigcup_{n,m\in\N}m\ST_n.
	\end{equation*}
	But, by Lemma~\ref{ClosedConvex}, $\ST_\alpha$ is a ${\MoA(\G)}$-norm closed subset of $\MoA(\G)$ for $\alpha\geq1$. Hence, by the Baire theorem one of the sets $mS_n$ (and hence $S_n$) for some $n,m\in\N$ must contain an inner point. But according to Lemma~\ref{ClosedConvex} $S_n$ is convex, and since also $S_n=-S_n$ it follows that $0$ is an inner point of $S_n$ and therefore that there exists a $\delta>0$ such that
	\begin{equation*}
		\setw{\varphi\in\MoA(\G)}{ \|\varphi\|_{\MoA(\G)}\leq\delta}\subseteq \ST_n.
	\end{equation*}
	According to Lemma~\ref{ClosedConvex}
	\begin{equation*}
		\ST_n\subseteq\setw{\varphi\in\MoA(\G)}{ \|\varphi\|_{\MoA(\G)}\leq n^2},
	\end{equation*}
	so it follows that
	\begin{equation*}
		\delta\|f\|_{\Ban_0(\G)}\leq\trip f\trip_n\leq n^2\|f\|_{\Ban_0(\G)}\qquad(f\in\ELL^1(\G)),
	\end{equation*}
	and therefore that the two norms are equivalent. The remaining conclusion follows easily with $c=\frac{n^4}{\delta}$, since
	\begin{equation*}
		\trip f*h\trip_n\leq\trip f\trip_n\trip h\trip_n\qquad(f,h\in \ELL^1(\G))
	\end{equation*}
	according to Lemma~\ref{seminorm}.
\end{proof}

We wish to arrive at a way to disprove the existence of such a $c>0$ through knowledge of spherical functions, so from now on we assume that $\G$ is part of a Gelfand pair $(\G,\K)$. Let $\mu_\K$ denote the left and right invariant Haar measure on $\K$, normalized such that $\mu_\K(\K)=1$. For $f\in\ELL^1(\G)$ and $k_1,k_2\in\K$ let ${}_{k_1}f_{k_2}$ denote the translate of $f$ in the sense that
\begin{equation*}
	{}_{k_1}f_{k_2}(g)=f(k_1^{-1}g k_2)\qquad(g\in\G).
\end{equation*}
Note that ${}_{k_1}f_{k_2}\in\ELL^1(\G)$ with $\|{}_{k_1}f_{k_2}\|_1=\|f\|_1$ and that the map
\begin{equation*}
	(k_1,k_2)\mapsto{}_{k_1}f_{k_2}\qquad(k_1,k_2\in\K)
\end{equation*}
is norm continuous. Since $\ELL^1(\G)$ is a Banach space and $\ELL^1(\G)^*$ separate the points (even $\cont_\vanish(\G)$ separate the points) one can use standard vector-valued integration techniques to define
\begin{equation*}
	\ELL^1(\G)\ni f^\natural=\int_{\K\times\K}{}_{k_1}f_{k_2}\dd(\mu_\K\otimes\mu_\K)((k_1,k_2))
\end{equation*}
for $f\in\ELL^1(\G)$, and find that
\begin{equation}
	\label{fradnorm}
	\|f^\natural\|_1\leq\int_{\K\times\K}\|{}_{k_1}f_{k_2}\|_1\dd(\mu_\K\otimes\mu_\K)((k_1,k_2))=\|f\|_1.
\end{equation}
We will refer to $f^\natural$ as the \emph{radialization} of $f$. Similarly, one wishes to define a radialization $\varphi^\natural$ of $\varphi\in\MoA(\G)$ (cf.~\cite[Proposition~1.6~(a)]{CH:CompletelyBoundedMultipliersOfTheFourierAlgebraOfASimpleLieGroupOfRealRankOne}). To this end, we need to know that ${}_{k_1}\varphi_{k_2}\in\MoA(\G)$ with $\|{}_{k_1}\varphi_{k_2}\|_{\MoA(\G)}=\|\varphi\|_{\MoA(\G)}$ for $k_1,k_2\in\K$, where ${}_{k_1}\varphi_{k_2}$ is the translate of $\varphi$. But this follows easily using Proposition~\ref{Gilbert0}. Note that
\begin{equation*}
	|\dual{f}{{}_{k_1}\varphi_{k_2}}-\dual{f}{{}_{k_1'}\varphi_{k_2'}}|\leq\|{}_{k_1^{-1}}f_{k_2^{-1}}-{}_{k_1'^{-1}}f_{k_2'^{-1}}\|_1\|\varphi\|_{\MoA(\G)}
\end{equation*}
for $f\in\ELL^1(\G)$ and $k_1,k_2,k_1',k_2'\in\K$. Since $\Ban_0(\G)^*=\MoA(\G)$ and $\ELL^1(\G)$ is a dense subset of $\Ban_0(\G)$ one now finds that the map
\begin{equation*}
	(k_1,k_2)\mapsto{}_{k_1}\varphi_{k_2}\qquad(k_1,k_2\in\K)
\end{equation*}
is $w^*$ continuous. Since $\|{}_{k_1}\varphi_{k_2}\|_{\MoA(\G)}=\|\varphi\|_{\MoA(\G)}$ for $k_1,k_2\in\K$ it follows that $\setw{{}_{k_1}\varphi_{k_2}}{k_1,k_2\in\K}$ is a norm bounded subset of $\MoA(\G)$, and therefore that $\overline{\co}^{w^*}\setw{{}_{k_1}\varphi_{k_2}}{k_1,k_2\in\K}$ (the $w^*$ closed convex hull) is a $w^*$ closed, norm bounded subset of $\MoA(G)$. From Alaoglu's theorem it finally follows that $\overline{\co}^{w^*}\setw{{}_{k_1}\varphi_{k_2}}{k_1,k_2\in\K}$ is $w^*$ compact. Using this together with the fact that $\MoA(\G)$, equipped with the $w^*$ topology, is a topological vector space whose dual separates the points (the dual with respect to the $w^*$ topology is $\Ban_0(\G)$) one can use~\cite[Theorem~3.27]{Rud:FunctionalAnalysis} to define
\begin{equation*}
	\MoA(\G)\ni\varphi^\natural=\int_{\K\times\K}{}_{k_1}\varphi_{k_2}\dd(\mu_\K\otimes\mu_\K)((k_1,k_2))
\end{equation*}
for $\varphi\in\MoA(\G)$, from which it follows by standard arguments that
\begin{equation}
	\label{varphiradnorm}
	\|\varphi^\natural\|_{\MoA(\G)}\leq\int_{\K\times\K}\|{}_{k_1}\varphi_{k_2}\|_{\MoA(\G)}\dd(\mu_\K\otimes\mu_\K)((k_1,k_2))=\|\varphi\|_{\MoA(\G)}.\footnote{Actually, it follows from~\cite[Theorem~3.27]{Rud:FunctionalAnalysis} that $\varphi^\natural\in\overline{\co}^{w^*}\setw{{}_{k_1}\varphi_{k_2}}{k_1,k_2\in\K}$ from which the desired result also follows, since we have already shown that this set is bounded in norm by $\|\varphi\|_{\MoA(\G)}$.}
\end{equation}
One can easily show that $\dual{f^\natural}{\varphi}=\dual{f}{\varphi^\natural}$ for $f\in\ELL^1(\G)$ and $\varphi\in\ELL^\infty(\G)$ (recall that $\G$ is unimodular). Since obviously $(\varphi^\natural)^\natural=\varphi^\natural$ one gets the following relations between the two types of radialization:
\begin{equation}
	\label{connection}
	\dual{f^\natural}{\varphi}=\dual{f^\natural}{\varphi^\natural}=\dual{f}{\varphi^\natural}.
\end{equation}
\begin{lemma}
	\label{anotherrad}
	If $f\in\ELL^1(\G)$, then $\|f^\natural\|_{\Ban_0(\G)}\leq\|f\|_{\Ban_0(\G)}$.
\end{lemma}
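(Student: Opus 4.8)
The estimate is an immediate consequence of the duality between $\Ban_0(\G)$ and $\MoA(\G)$ together with the two compatible radializations constructed above. First I would recall the realization of the predual of $\MoA(\G)$ recorded earlier: for any $h\in\ELL^1(\G)$ the norm $\|h\|_{\Ban_0(\G)}$ is the supremum of $|\dual{h}{\varphi}|$ over $\varphi\in\MoA(\G)$ with $\|\varphi\|_{\MoA(\G)}\leq1$; equivalently, $|\dual{h}{\psi}|\leq\|h\|_{\Ban_0(\G)}\|\psi\|_{\MoA(\G)}$ for every $\psi\in\MoA(\G)$.

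Next I would fix $\varphi\in\MoA(\G)$ with $\|\varphi\|_{\MoA(\G)}\leq1$. Since $\MoA(\G)\subseteq\ELL^\infty(\G)$, the identity~\eqref{connection} applies and gives $\dual{f^\natural}{\varphi}=\dual{f}{\varphi^\natural}$. Combining this with the predual estimate applied to $\psi=\varphi^\natural$ and the norm bound~\eqref{varphiradnorm}, namely $\|\varphi^\natural\|_{\MoA(\G)}\leq\|\varphi\|_{\MoA(\G)}\leq1$, I obtain
\begin{equation*}
	|\dual{f^\natural}{\varphi}|=|\dual{f}{\varphi^\natural}|\leq\|f\|_{\Ban_0(\G)}\|\varphi^\natural\|_{\MoA(\G)}\leq\|f\|_{\Ban_0(\G)}.
\end{equation*}
Taking the supremum over all such $\varphi$ then yields $\|f^\natural\|_{\Ban_0(\G)}\leq\|f\|_{\Ban_0(\G)}$, which is the claim.

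There is essentially no obstacle here; the argument is two lines once the preparatory facts are in place. The only points requiring (minor) care are that the identity~\eqref{connection} was derived for $\varphi\in\ELL^\infty(\G)$, which certainly covers $\varphi\in\MoA(\G)$, and that one uses only the already-established inequality $\|\varphi^\natural\|_{\MoA(\G)}\leq\|\varphi\|_{\MoA(\G)}$ rather than the full description of $\varphi^\natural$ as a $w^*$-barycenter of the translates of $\varphi$.
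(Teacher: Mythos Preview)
Your proof is correct and follows exactly the same approach as the paper: the paper's proof simply states that the lemma follows from~\eqref{connection} and~\eqref{varphiradnorm} together with the fact that $\MoA(\G)$ is the dual of $\Ban_0(\G)$, and you have spelled out precisely those two lines.
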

\begin{proof}
	The lemma follows from~\eqref{connection} and~\eqref{varphiradnorm} since $\MoA(\G)$ is the dual of $\Ban_0(\G)$.
\end{proof}
\begin{proposition}
	\label{!cExist}
	If there is a constant $c>0$ such that
	\begin{equation}
		\label{constc}
		\|f*h\|_{\Ban_0(\G)}\leq c\|f\|_{\Ban_0(\G)}\|h\|_{\Ban_0(\G)}\qquad(f,h\in\cont_\cpt(\G)^\natural),
	\end{equation}
	then any spherical function $\varphi$ in $\MoA(\G)$ will satisfy $\|\varphi\|_{\MoA(\G)}\leq c$.
\end{proposition}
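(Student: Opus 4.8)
The plan is to realize $\|\varphi\|_{\MoA(\G)}$ as a supremum taken only over $\K$-bi-invariant test functions, and then to run a spectral-radius type argument coupling the character property of a spherical function with the submultiplicativity assumption~\eqref{constc}.

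First I would reduce to radial test functions. Since $\cont_\cpt(\G)$ is $\|\cdot\|_1$-dense in $\ELL^1(\G)$ and $\|\cdot\|_{\Ban_0(\G)}\leq\|\cdot\|_1$, it is $\|\cdot\|_{\Ban_0(\G)}$-dense in $\Ban_0(\G)$; combining this with $\MoA(\G)=\Ban_0(\G)^*$ gives
\begin{equation*}
	\|\varphi\|_{\MoA(\G)}=\sup\setw{|\dual{f}{\varphi}|}{f\in\cont_\cpt(\G),\ \|f\|_{\Ban_0(\G)}\leq1}.
\end{equation*}
For $f\in\cont_\cpt(\G)$ the radialization $f^\natural$ is again continuous, $\K$-bi-invariant and supported in $\K\cdot\support(f)\cdot\K$, hence $f^\natural\in\cont_\cpt(\G)^\natural$, and $\|f^\natural\|_{\Ban_0(\G)}\leq\|f\|_{\Ban_0(\G)}$ by Lemma~\ref{anotherrad}. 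As $\varphi$ is $\K$-bi-invariant we have $\varphi^\natural=\varphi$, so~\eqref{connection} yields $\dual{f}{\varphi}=\dual{f^\natural}{\varphi}$. Since trivially $\cont_\cpt(\G)^\natural\subseteq\cont_\cpt(\G)$, it follows that
\begin{equation*}
	\|\varphi\|_{\MoA(\G)}=\sup\setw{|\dual{h}{\varphi}|}{h\in\cont_\cpt(\G)^\natural,\ \|h\|_{\Ban_0(\G)}\leq1}.
\end{equation*}

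Next I would fix $h\in\cont_\cpt(\G)^\natural$ with $\|h\|_{\Ban_0(\G)}\leq1$ and pass to its convolution powers $h^{*n}$. Convolution preserves being continuous, compactly supported and $\K$-bi-invariant, so $h^{*n}\in\cont_\cpt(\G)^\natural$ for every $n$, and iterating~\eqref{constc} gives $\|h^{*n}\|_{\Ban_0(\G)}\leq c^{n-1}\|h\|_{\Ban_0(\G)}^n\leq c^{n-1}$. On the other hand, the defining property of a spherical function is that $f\mapsto\dual{f}{\varphi}$ is a character on the convolution algebra $\cont_\cpt(\G)^\natural$, whence $\dual{h^{*n}}{\varphi}=\dual{h}{\varphi}^n$. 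Using the duality pairing of $\Ban_0(\G)$ with $\MoA(\G)$ and the hypothesis $\varphi\in\MoA(\G)$ (which is exactly where finiteness of $\|\varphi\|_{\MoA(\G)}$ is used), we obtain
\begin{equation*}
	|\dual{h}{\varphi}|^n=|\dual{h^{*n}}{\varphi}|\leq\|h^{*n}\|_{\Ban_0(\G)}\,\|\varphi\|_{\MoA(\G)}\leq c^{n-1}\|\varphi\|_{\MoA(\G)}.
\end{equation*}
Taking $n$-th roots and letting $n\to\infty$, and noting $(\|\varphi\|_{\MoA(\G)}/c)^{1/n}\to1$ (here $\|\varphi\|_{\MoA(\G)}\geq\|\varphi\|_\infty\geq\varphi(e)=1>0$), we get $|\dual{h}{\varphi}|\leq c$. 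Taking the supremum over all admissible $h$ and invoking the formula from the first step gives $\|\varphi\|_{\MoA(\G)}\leq c$, as desired.

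I expect the first step to be the part that needs the most care: reducing the supremum defining the $\MoA(\G)$-norm to $\K$-bi-invariant functions, which hinges on Lemma~\ref{anotherrad}, the identities~\eqref{connection}, and the easy but necessary verification that $f^\natural$ and the powers $h^{*n}$ stay within $\cont_\cpt(\G)^\natural$, so that both the character property and hypothesis~\eqref{constc} are applicable to them. Everything after that is the standard spectral-radius argument, the only remaining subtlety being that one must already know $\varphi\in\MoA(\G)$.
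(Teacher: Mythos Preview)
Your proof is correct and follows essentially the same route as the paper: the same reduction of $\|\varphi\|_{\MoA(\G)}$ to a supremum over $\cont_\cpt(\G)^\natural$ via Lemma~\ref{anotherrad} and~\eqref{connection}, followed by the character--submultiplicativity argument. The only cosmetic difference is that the paper packages your spectral-radius computation as the abstract statement ``characters on a commutative Banach algebra have norm at most $1$'' (applied to the completion of $\cont_\cpt(\G)^\natural$ in the norm $c\|\cdot\|_{\Ban_0(\G)}$), whereas you unwind that statement explicitly via convolution powers.
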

\begin{proof}
	Assume the existence of a $c>0$ satisfying~\eqref{constc}. Consider the algebra $\cont_\cpt(\G)^\natural$ of finitely supported radial functions on $\G$ with multiplication given by convolution. This is a commutative algebra and it follows from the assumption that the completion of $\cont_\cpt(\G)^\natural$ under the ${\Ban_0(\G)}$-norm is a commutative Banach algebra with respect to the norm $c\|\cdot\|_{\Ban_0(\G)}$. Let $\varphi$ be a spherical function which is also a completely bounded Fourier multiplier of $\G$. On the one hand, since $\varphi$ is a spherical function we have that
	\begin{equation}
		\label{character}
		f\mapsto\dual{f}{\varphi}\qquad(f\in\cont_\cpt(\G)^\natural)
	\end{equation}
	is a character (cf.~\cite[Lemma~1.5]{FTP:HarmonicAnalysisOnFreeGroups}). On the other hand, since $\varphi\in\MoA(\G)$, we have by duality that
	\begin{equation*}
		|\dual{f}{\varphi}|\leq\|\varphi\|_{\MoA(G)}\|f\|_{\Ban_0(\G)}\qquad(f\in\cont_\cpt(\G)^\natural),
	\end{equation*}
	so we can extend~\eqref{character} to a character on $\overline{\cont_\cpt(\G)^\natural}^{\|\cdot\|_{\Ban_0(\G)}}$. But since every character on an (abelian) Banach algebra has norm less than or equal to $1$, we have that
	\begin{equation*}
		|\dual{f}{\varphi}|\leq c\|f\|_{\Ban_0(\G)}\qquad(f\in\cont_\cpt(\G)^\natural).
	\end{equation*}
	Notice that $\cont_\cpt(\G)$ is dense in $\ELL^1(\G)$ with respect to the ${\ELL^1(\G)}$-norm and therefore with respect to the ${\Ban_0(\G)}$-norm and therefore also dense in $\Ban_0(\G)$. Using this together with duality, Lemma~\ref{anotherrad} and~\eqref{connection} (recall that $\varphi^\natural=\varphi$) we find that
	\begin{eqnarray*}
		\|\varphi\|_{\MoA(\G)}
		& = & \sup\setw{ |\dual{f}{\varphi}| }{ f\in\cont_\cpt(\G),\, \|f\|_{\Ban_0(\G)}\leq1}\\
		& = & \sup\setw{ |\dual{f^\natural}{\varphi}| }{ f\in \cont_\cpt(\G),\, \|f\|_{\Ban_0(\G)}\leq1}\\
		& \leq & \sup\setw{ |\dual{h}{\varphi}| }{ h\in \cont_\cpt(\G)^\natural,\, \|h\|_{\Ban_0(\G)}\leq1}\\
		& \leq & c.
	\end{eqnarray*}
\end{proof}
\begin{maintheorem}
	\label{HaageruponGelfand}
	Let $\G$ be a group of the form $\GG$, $SU(1,n)$, $Sp(1,n)$ (with $n\geq2$), $F_{4(-20)}$ or $\PGLQ$ (with $q$ a prime number). There is a completely bounded Fourier multiplier of $\G$ which is not the coefficient of a strongly continuous, uniformly bounded representation of $\G$.
\end{maintheorem}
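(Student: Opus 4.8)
The plan is a short proof by contradiction that ties together Propositions~\ref{cExist} and~\ref{!cExist} with the failure of uniform boundedness of $\|\varphi_s\|_{\MoA(\G)}$.

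Suppose, to the contrary, that every completely bounded Fourier multiplier of $\G$ is the coefficient of a strongly continuous, uniformly bounded representation of $\G$. Each of the groups in the statement is locally compact and unimodular, so Proposition~\ref{cExist} applies and produces a constant $c>0$ with
\[
\|f*h\|_{\Ban_0(\G)}\leq c\,\|f\|_{\Ban_0(\G)}\,\|h\|_{\Ban_0(\G)}\qquad(f,h\in\ELL^1(\G)),
\]
and in particular for all $f,h\in\cont_\cpt(\G)^\natural$.

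Now I would invoke the Gelfand-pair structure: for the four real rank one Lie groups $\K$ is the maximal compact subgroup coming from the Iwasawa decomposition, and for $\PGLQ$ one takes $\K=\PSLZ$; in every case $(\G,\K)$ is a Gelfand pair and $\G$ is unimodular, so Proposition~\ref{!cExist} yields that \emph{every} spherical function $\varphi$ on $(\G,\K)$ which lies in $\MoA(\G)$ satisfies $\|\varphi\|_{\MoA(\G)}\leq c$. This is where everything collapses. For $\G$ one of $\GG$, $SU(1,n)$, $Sp(1,n)$ ($n\geq2$) or $F_{4(-20)}$ this contradicts Theorem~\ref{uniformlyunbd}: by Theorem~\ref{boundary} all the spherical functions $\varphi_s$ with $s\in\bS$ do lie in $\MoA(\G)$, yet their norms $\|\varphi_s\|_{\MoA(\G)}$ are not uniformly bounded on $\bS$. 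For $\G=\PGLQ$ one uses instead the corresponding non-uniform-boundedness statement from~\cite[Theorem~5.8]{HSS:SchurMultipliersAndSphericalFunctionsOnHomogeneousTrees}, which furnishes spherical functions in $\MoA(\PGLQ)$ of arbitrarily large completely bounded Fourier multiplier norm. Either way the initial assumption is untenable, so $\G$ admits a completely bounded Fourier multiplier that is not a coefficient of any strongly continuous, uniformly bounded representation.

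I do not expect the deduction above to be the hard part --- it is essentially a formality once the two propositions are in hand. The genuine obstacles lie upstream: establishing that each $\ST_\alpha$ is norm-closed in $\MoA(\G)$ (the ultraproduct-of-Hilbert-spaces argument of Lemma~\ref{ClosedConvex}, which is precisely what makes the Baire category step in Proposition~\ref{cExist} work), and the sharp analysis of $\|\varphi_s\|_{\MoA(\G)}$ near the edge of the strip (Theorems~\ref{maintheorem1} and~\ref{boundary}, feeding into Theorem~\ref{uniformlyunbd}). A minor point still to be verified in the write-up is that $(\PGLQ,\PSLZ)$ really does meet the hypotheses of both Propositions~\ref{cExist} and~\ref{!cExist}, i.e.\ unimodularity of $\PGLQ$ and the Gelfand property of the pair, both of which are classical.
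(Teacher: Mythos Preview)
Your proof is correct and follows essentially the same route as the paper: assume to the contrary, combine Proposition~\ref{cExist} with Proposition~\ref{!cExist} to force a uniform bound on $\|\varphi\|_{\MoA(\G)}$ over all spherical functions in $\MoA(\G)$, and then contradict Theorem~\ref{uniformlyunbd} for the real rank one Lie groups or \cite[Theorem~5.8]{HSS:SchurMultipliersAndSphericalFunctionsOnHomogeneousTrees} for $\PGLQ$. The paper's own proof is in fact even terser than yours, simply citing these three ingredients without spelling out the contradiction.
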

\begin{proof}
	When $\G$ is $\GG$, $SU(1,n)$, $Sp(1,n)$ (with $n\geq2$) or $F_{4(-20)}$ the theorem follows from Theorem~\ref{uniformlyunbd} and Proposition~\ref{cExist} and~\ref{!cExist}. The case $\G=\PGLQ$ follows by replacing Theorem~\ref{uniformlyunbd} with~\cite[Theorem~5.8]{HSS:SchurMultipliersAndSphericalFunctionsOnHomogeneousTrees}, which states that there is no uniform bound on the ${\MoA(\G)}$-norm among the spherical functions on $\PGLQ$ which are completely bounded Fourier multipliers of $\PGLQ$ (the norms are explicitly calculated).
\end{proof}
\begin{remark}
	\label{withoutsoctsnow}
	Actually, using the techniques of the proof of Lemma~\ref{ClosedConvex}, one can verify that strong continuity can be omitted in Theorem~\ref{HaageruponGelfand}.
\end{remark}

\bigskip\noindent
Let $\Gamma$ be a group of the form
\begin{equation}
	\label{freeconvolution1}
	\Gamma=\FF,
\end{equation}
where $M,N\in\N_0$ with $M+2N\geq3$. In particular, this includes the groups
\begin{equation*}
	\ast_{m=1}^M\Z/2\Z\qquad(3\leq M<\infty)
\end{equation*}
and the (non-abelian) free groups
\begin{equation*}
	\F_N=\ast_{n=1}^N\Z\qquad(2\leq N<\infty).
\end{equation*}
Let $e$ denote the identity element in $\Gamma$ and put $q=M+2N-1$. By~\cite[p.~16--18]{FTN:HarmonicAnalysisAndRepresentationTheoryForGroupsActingOnHhomogeneousTrees} the Cayley graph of $\Gamma$ is a homogeneous tree of degree $q+1$. We now work toward obtaining the following result, stating that the conclusion of Theorem~\ref{HaageruponGelfand} also holds for $\Gamma$. The proof follows the methods from an unpublished manuscript of U.~Haagerup for the case $\Gamma=\F_N$. A different proof for the case $\Gamma=\F_N$ was later found by Pisier (cf.~\cite{Pis:AreUnitarizableGroupsAmenable?}).
\begin{theorem}
	\label{Haagerup}
	Consider a group $\Gamma$ of the form~\eqref{freeconvolution1}. There is a completely bounded Fourier multiplier of $\Gamma$ which is not the coefficient of a uniformly bounded representation of $\Gamma$.
\end{theorem}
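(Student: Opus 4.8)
The plan is to transplant the reductions of Section~\ref{coeff} from the Gelfand-pair setting to the discrete group $\Gamma$, with word-length radialization playing the role of averaging over a compact subgroup. Write $T$ for the Cayley graph of $\Gamma$ (a homogeneous tree of degree $q+1$, $q=M+2N-1\geq2$), $|\gamma|$ for the distance in $T$ from $e$ to $\gamma$, $S_n=\setw{\gamma\in\Gamma}{|\gamma|=n}$ for the finite spheres, and call a function on $\Gamma$ \emph{radial} if it depends only on $|\gamma|$; keep the superscript $\natural$ for the radial subspace and for the radialization
\[
\varphi^\natural(\gamma)=\frac{1}{|S_{|\gamma|}|}\sum_{\gamma'\in S_{|\gamma|}}\varphi(\gamma')\qquad(\gamma\in\Gamma).
\]
Assume, towards a contradiction, that every completely bounded Fourier multiplier of $\Gamma$ is a coefficient of a uniformly bounded representation. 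Since $\Gamma$ is discrete, every representation is automatically strongly continuous, so Lemmas~\ref{seminorm} and~\ref{ClosedConvex} apply without change (the ultraproduct in the proof of Lemma~\ref{ClosedConvex} now directly yields a uniformly bounded representation, with no continuity repair needed). Hence each $\ST_\alpha$ is a $\MoA(\Gamma)$-norm closed convex subset of $\MoA(\Gamma)$ on which $\|\varphi\|_{\MoA(\Gamma)}\leq\alpha^2$, the hypothesis reads $\MoA(\Gamma)=\bigcup_{n,m\in\N}m\ST_n$, and the Baire-category argument of Proposition~\ref{cExist} produces a constant $c>0$ with
\[
\|f*h\|_{\Ban_0(\Gamma)}\leq c\,\|f\|_{\Ban_0(\Gamma)}\|h\|_{\Ban_0(\Gamma)}\qquad(f,h\in\cont_\cpt(\Gamma)),
\]
in particular for radial $f,h$.

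The second ingredient is the analogue of Lemma~\ref{anotherrad}: radialization is $\Ban_0(\Gamma)$-norm non-increasing, which by duality is the same as saying that $\varphi\mapsto\varphi^\natural$ is a contraction of $\MoA(\Gamma)$. Since $\Gamma$ has no compact subgroup, the averaging of~\eqref{varphiradnorm} cannot be copied; instead one uses that $\Gamma$ acts simply transitively on the vertices of $T$ and that the stabilizer $K$ of $e$ in $\operatorname{Aut}(T)$ is compact and acts transitively on every sphere $S_n$, so that $\varphi^\natural(\gamma)=\int_K\varphi(k\cdot\gamma)\,\dd\mu_K(k)$, where $k\cdot\gamma\in V(T)=\Gamma$ is the vertex $k\gamma$; combined with the Herz--Schur characterization of Proposition~\ref{Gilbert0} and the fact that $K$ acts on $T$ by isometries, this yields $\|\varphi^\natural\|_{\MoA(\Gamma)}\leq\|\varphi\|_{\MoA(\Gamma)}$. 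This is Haagerup's key lemma in the case $\Gamma=\F_N$, and its proof carries over; I expect it to be the main obstacle, as the absence of a compact subgroup of $\Gamma$ forces the averaging to be carried out at the level of Schur multipliers rather than by a change of variables in the group.

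With these in place I would run the analogue of Proposition~\ref{!cExist}. By Fig\`a-Talamanca--Picardello and Fig\`a-Talamanca--Nebbia (\cite{FTP:HarmonicAnalysisOnFreeGroups},~\cite{FTN:HarmonicAnalysisAndRepresentationTheoryForGroupsActingOnHhomogeneousTrees}) the radial finitely supported functions $\cont_\cpt(\Gamma)^\natural$ form a commutative convolution algebra whose characters are the spherical functions on the tree $T$. By the displayed submultiplicativity the completion of $\cont_\cpt(\Gamma)^\natural$ in the norm $c\,\|{\cdot}\|_{\Ban_0(\Gamma)}$ is a commutative Banach algebra, so any spherical function $\varphi$ that is a completely bounded Fourier multiplier of $\Gamma$ restricts there to a character and hence satisfies $|\dual{f}{\varphi}|\leq c\,\|f\|_{\Ban_0(\Gamma)}$ for all radial $f$; using $\dual{f^\natural}{\varphi}=\dual{f}{\varphi^\natural}=\dual{f}{\varphi}$, the density of $\cont_\cpt(\Gamma)$ in $\Ban_0(\Gamma)$, and the radialization bound exactly as in Proposition~\ref{!cExist}, one concludes $\|\varphi\|_{\MoA(\Gamma)}\leq c$ for every such $\varphi$. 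This contradicts~\cite[Theorem~5.8]{HSS:SchurMultipliersAndSphericalFunctionsOnHomogeneousTrees}: for a radial function on $\Gamma$, Proposition~\ref{Gilbert0} identifies $\|\varphi\|_{\MoA(\Gamma)}$ with the Schur norm of the distance kernel $(x,y)\mapsto\varphi(d(x,y))$ on $V(T)\times V(T)$, an intrinsic quantity of the $(q+1)$-regular tree which equals the $\MoA(\PGLQ)$-norm of the corresponding spherical function on $(\PGLQ,\PSLZ)$ when $q$ is prime and is given by the same combinatorial formula in general, and these norms are not uniformly bounded over the spherical functions that are completely bounded multipliers. Therefore some completely bounded Fourier multiplier of $\Gamma$ is not a coefficient of a uniformly bounded representation.
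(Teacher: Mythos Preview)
Your proposal is correct and follows essentially the same route as the paper: reduce via Baire category (Proposition~\ref{cExist}), show the analogue of Proposition~\ref{!cExist} for $\Gamma$ using a radialization that contracts the $\MoA$-norm, and contradict the unboundedness of the spherical-function norms. Two small refinements are worth noting. First, the paper invokes \cite[Theorem~4.4]{HSS:SchurMultipliersAndSphericalFunctionsOnHomogeneousTrees} directly for the groups $\Gamma$, which is cleaner than your detour through \cite[Theorem~5.8]{HSS:SchurMultipliersAndSphericalFunctionsOnHomogeneousTrees} and the Schur-kernel identification (the latter only literally matches $\PGLQ$ when $q$ is prime). Second, for the contraction $\|\varphi^\natural\|_{\MoA(\Gamma)}\leq\|\varphi\|_{\MoA(\Gamma)}$ the one-variable formula $\varphi^\natural(\gamma)=\int_K\varphi(k\gamma)\,\dd\mu_K(k)$ is not by itself enough to feed into Proposition~\ref{Gilbert0}; what the paper actually proves (Lemma~\ref{lemmaB}, via the transitivity Lemma~\ref{LemmaA}) is the two-variable identity $\varphi^\natural(y^{-1}x)=\int_K\varphi\bigl(k(y)^{-1}k(x)\bigr)\,\dd\mu_K(k)$, from which $\tilde P(x)(k)=P(k(x))$, $\tilde Q(y)(k)=Q(k(y))$ in $L^2(K,\Hil)$ gives the bound immediately. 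You correctly flagged this as the main obstacle, and your sketch points at exactly the right ingredients.
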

The proof of Theorem~\ref{HaageruponGelfand} (in the case when $\G$ is $\GG$, $SU(1,n)$, $Sp(1,n)$ (with $n\geq2$) or $F_{4(-20)}$) followed from Theorem~\ref{uniformlyunbd} and Proposition~\ref{cExist} and~\ref{!cExist}. We will show that these three results are still true, when one replaces $\G$ with $\Gamma$. Obtaining these three results for $\Gamma$ was the approach taken by U.~Haagerup in his unpublished manuscript. We will now go through the argumentation needed to verify these three results for $\Gamma$. Proposition~\ref{cExist} was proved for locally compact groups, so this still holds true for $\Gamma$ (the proof of Lemma~\ref{ClosedConvex} for $\Gamma$ is in fact considerably easier, since the part about strong continuity can be omitted). The analogue of Theorem~\ref{uniformlyunbd} for $\Gamma$ follows from~\cite[Theorem~4.4]{HSS:SchurMultipliersAndSphericalFunctionsOnHomogeneousTrees} in which the actual ${\MoA(\Gamma)}$-norm of the spherical functions on $\Gamma$ are calculated (the spherical functions on $\Gamma$ are not given in terms of Gelfand pairs, but this will be taken up shortly).
What remains in order to prove Theorem~\ref{Haagerup}, is to prove Proposition~\ref{!cExist} for $\Gamma$. To do this, we recall the definition of the spherical functions on $\Gamma$.

Let $\dd:\Gamma\times\Gamma\to\N_0$ be the graph distance on the Cayley graph of $\Gamma$ (note that $\dd$ is invariant under left multiplication). A function $f:\Gamma\to\C$ is called \emph{radial} if there exists a function $\dot f:\N_0\to\C$ such that
\begin{equation*}
	f(x)=\dot f(\dd(x,e))\qquad(x\in\Gamma).
\end{equation*}
Since $\dd(x,e)$ is the reduced word length of a $x\in\Gamma$, we will often write $|x|$ instead of $\dd(x,e)$. Let $\cont_\cpt(\Gamma)^\natural$ denote the finitely supported functions on $\Gamma$ which are radial (we let the superscripts $\natural$ on a set of functions on $\Gamma$ denote the subset consisting of the radial functions). It is well known that $\cont_\cpt(\Gamma)^\natural$ is commutative with respect to convolution (cf.~\cite[Ch.~3~Lemma~1.1]{FTP:HarmonicAnalysisOnFreeGroups}). Analogously to the case of Gelfand pairs, a function $\varphi\in\cont(\Gamma)^\natural$ is called a \emph{spherical function} on $\Gamma$ if
\begin{equation*}
	f\mapsto\dual{f}{\varphi}=\sum_{x\in\Gamma}f(x)\varphi(x)\qquad(f\in\cont_\cpt(\Gamma)^\natural)
\end{equation*}
is a non-zero character (cf.~\cite[Ch.~3~Lemma~1.5]{FTP:HarmonicAnalysisOnFreeGroups}). The bounded spherical functions are exactly those which extend to characters of $\ell^1(\Gamma)^\natural$.
By going through the proof of Proposition~\ref{!cExist} (and Lemma~\ref{anotherrad}) it is seen that everything works out in the case of $\Gamma$ if we can establish a radialization $f\mapsto f^\natural$ of functions on $\Gamma$ satisfying the following formulas corresponding to~\eqref{fradnorm}, \eqref{varphiradnorm} and~\eqref{connection}:
\begin{equation}
	\label{3.6'}
	\|f^\natural\|_1\leq\|f\|_1\qquad(f\in\ell^1(\Gamma)),
\end{equation}
\begin{equation}
	\label{3.7'}
	\|\varphi^\natural\|_{\MoA(\Gamma)}\leq\|\varphi\|_{\MoA(\Gamma)}\qquad(\varphi\in\MoA(\Gamma))
\end{equation}
and
\begin{equation}
	\label{3.8'}
	\dual{f^\natural}{\varphi}=\dual{f^\natural}{\varphi^\natural}=\dual{f}{\varphi^\natural}\qquad(f\in\ell^1(\Gamma),\,\varphi\in\ell^\infty(\Gamma)).
\end{equation}
Put
\begin{equation}
	\label{EEn}
	\EE_n=\setw{x\in\Gamma }{ |x|=n }\qquad(n\in\N_0)
\end{equation}
and
\begin{equation}
	\label{a13.11}
	\EE_x=\EE_{|x|}=\setw{y\in\Gamma}{|y|=|x|}\qquad(x\in\Gamma).
\end{equation}
For $h:\Gamma\to\C$ define $h^\natural:\Gamma\to\C$ by
\begin{equation}
	\label{3.175}
	h^\natural(x)=\frac{1}{|\EE_x|}\sum_{y\in \EE_x}h(y)\qquad(x\in\Gamma),
\end{equation}
where $|\EE|$ denotes the number of elements in a set $\EE$. It is obvious that $h^\natural$ is radial and the reader may verify~\eqref{3.8'} using the same technique as for verifying~\eqref{connection}. Let $f\in\ell^1(\Gamma)$ and note that
\begin{equation*}
	\|f^\natural\|_1=\sum_{n\in\N_0}|\sum_{y\in\EE_n}f(y)|\leq\sum_{n\in\N_0}\sum_{y\in\EE_n}|f(y)|=\|f\|_1,
\end{equation*}
which verifies~\eqref{3.6'}. Establishing~\eqref{3.7'} requires more effort and is postponed until Proposition~\ref{3.3.10}.

Let $\K$ be the group of isometries of the Cayley graph of $\Gamma$ leaving invariant the identity element $e\in\Gamma$. Then $\K$ is a subgroup of the infinite product
\begin{equation*}
	\prod_{n=0}^\infty\SK(\EE_n)
\end{equation*}
of the permutation groups $\SK(\EE_n)$ of $\EE_n$.
Each $\SK(\EE_n)$ is finite and hence compact in the discrete topology. By the Tychonoff theorem, $\prod_{n=0}^\infty\SK(\EE_n)$ is compact in the product topology. Since the product topology on $\prod_{n=0}^\infty\SK(\EE_n)$ coincide with the topology of pointwise convergence, one easily gets that $\K$ is a closed (and hence compact) subgroup. Let $\mu_\K$ denote the normalized left and right invariant Haar measure on $\K$.
\begin{remark}
	We mention that $\K$ is part of a Gelfand pair $(\G,\K)$, for which $\Gamma$ is isomorphic (as a set) to $\G/\K$ and the spherical functions on $\Gamma$ are in one-to-one correspondence with the spherical functions on $(\G,\K)$. In fact, $\G$ is given by the isometries of the Cayley graph of $\Gamma$, cf.~\cite[Ch.~3~\S~V]{FTP:HarmonicAnalysisOnFreeGroups} and \cite{Dun:EtudeD'uneClasseDeMarchesAl`eatoiresSurL'arbreHomog'ene}.
\end{remark}
Note that $\K$ acts transitively on the sets $\EE_n$ for $n\in\N_0$. Hence, for $x,y\in\EE_n$ the measure under $\mu_\K$ of
\begin{equation*}
	\setw{k\in\K}{k(x)=y}
\end{equation*}
is independent of $y$ (for fixed $x$) and therefore
\begin{equation*}
	\mu_\K(\setw{k\in\K}{k(x)=y})=\frac{1}{|\EE_n|}\qquad(x,y\in\EE_n).
\end{equation*}
From this, it follows that
\begin{equation*}
	h^\natural(x)=\int_\K h(k(x))\dd\mu_\K(k)\qquad(x\in\Gamma).
\end{equation*}
\begin{lemma}
	\label{LemmaA}
	If $s\in\EE_x$ and $t\in\EE_y$ satisfy $t^{-1}s\in\EE_{y^{-1}x}$ for $x,y\in\Gamma$, then there exists $k\in\K$ such that $k(x)=s$ and $k(y)=t$.
\end{lemma}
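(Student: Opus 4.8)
The plan is to recast the three hypotheses as equalities of graph distances and then to invoke the homogeneity of the tree. Since $\dd$ is left-invariant and the reduced word length is invariant under inversion, one has $\dd(x,y)=|x^{-1}y|=|y^{-1}x|$ for all $x,y\in\Gamma$. Therefore the assumptions $s\in\EE_x$, $t\in\EE_y$ and $t^{-1}s\in\EE_{y^{-1}x}$ say precisely that
\[
	\dd(e,x)=\dd(e,s),\qquad \dd(e,y)=\dd(e,t),\qquad \dd(x,y)=\dd(s,t).
\]
In other words, the assignment $e\mapsto e$, $x\mapsto s$, $y\mapsto t$ is a distance-preserving bijection between the three-point sets $\{e,x,y\}$ and $\{e,s,t\}$.

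Next I would pass to convex hulls in the tree. Since the Cayley graph of $\Gamma$ is a tree, the convex hull of $\{e,x,y\}$ is a tripod: there is a unique median vertex $v$ lying on all three geodesics $[e,x]$, $[e,y]$, $[x,y]$, and the convex hull is $[e,v]\cup[v,x]\cup[v,y]$, a union of three arcs meeting only at $v$, whose lengths are $\dd(e,v)=\tfrac12(\dd(e,x)+\dd(e,y)-\dd(x,y))$, $\dd(v,x)=\tfrac12(\dd(e,x)+\dd(x,y)-\dd(e,y))$, $\dd(v,y)=\tfrac12(\dd(e,y)+\dd(x,y)-\dd(e,x))$; in particular these lengths depend only on the pairwise distances among $e,x,y$. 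The same holds for $\{e,s,t\}$, with median vertex $w$ and arc lengths given by the same formulas in $\dd(e,s),\dd(e,t),\dd(s,t)$. By the displayed equalities the two tripods have the same three arc lengths, so there is a graph isomorphism $\phi$ from the convex hull of $\{e,x,y\}$ onto the convex hull of $\{e,s,t\}$ mapping the arc $[e,v]$ isometrically onto $[e,w]$ (hence $v\mapsto w$), the arc $[v,x]$ onto $[w,s]$, and the arc $[v,y]$ onto $[w,t]$; this is well defined and bijective because all three arcs meet exactly at $v$ (resp.\ $w$), and it satisfies $\phi(e)=e$, $\phi(x)=s$, $\phi(y)=t$.

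Finally, the homogeneous tree of degree $q+1$ is ultrahomogeneous: any isomorphism between finite subtrees extends to an automorphism of the whole tree. Indeed one extends $\phi$ one vertex at a time along a breadth-first exhaustion of $\Gamma$, at each step matching the not-yet-assigned neighbours of an already-mapped vertex with the not-yet-used neighbours of its image, which is possible because every vertex has the same finite degree $q+1$ and only finitely many neighbours have been used so far. Let $k$ be such an extension of $\phi$. Then $k$ is an isometry of the Cayley graph with $k(e)=\phi(e)=e$, so $k\in\K$, while $k(x)=\phi(x)=s$ and $k(y)=\phi(y)=t$, as required. (Alternatively, one could build $k$ directly by a level-by-level recursion over the sets $\EE_n$, but this amounts to the same extension argument.) The only step requiring genuine care is this last extension, i.e.\ the ultrahomogeneity of the tree; the passage to tripods and the matching of arc lengths is routine bookkeeping with the median description of finite subsets of a tree.
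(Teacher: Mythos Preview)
Your proof is correct and follows essentially the same route as the paper's: both identify the tripods (convex hulls with median vertices) of $\{e,x,y\}$ and $\{e,s,t\}$, observe that the hypotheses force the three arm lengths to match, and then appeal to the homogeneity of the tree to extend the partial isometry to an element of $\K$. The paper carries this out in the language of reduced words (writing $x=ux'$, $y=uy'$ with $u$ the common prefix, and similarly for $s,t$) rather than in geodesic/median language, and it is terser about the extension step, simply invoking that the Cayley graph is a homogeneous tree; your explicit sketch of ultrahomogeneity is a welcome elaboration of exactly that point.
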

\begin{proof}
	Put $m=|x|$, $n=|y|$ and $l=\tfrac{1}{2}(m+n-\dd(x,y))$. Then $l\in\N_0$ and the reduced word of $y^{-1}x$ is obtained by canceling the last $l$ letters in $y^{-1}$ and the first $l$ letters in $x$. Therefore, 
	\begin{equation*}
		x=u x'\quad\mbox{and}\quad y=u y',
	\end{equation*}
	where $|u|=l$, $|x'|=m-l$ and $|y'|=n-l$, and where $y'^{-1}x'$ is reduced. Similarly,
	\begin{equation*}
		s=v s'\quad\mbox{and}\quad t=v t',
	\end{equation*}
	where $|v|=l$, $|s'|=m-l$ and $|t'|=n-l$, and where $t'^{-1}s'$ is reduced. See Figure~\ref{rotation} for an illustration of the shortest routes between points $(e,x,y)$ and $(e,s,t)$.
	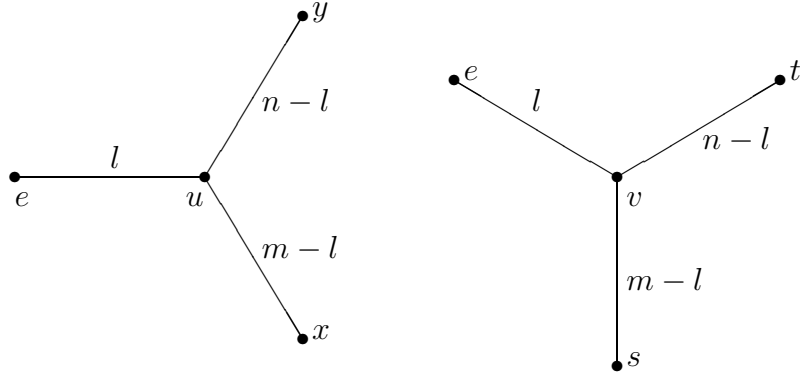
\begin{figure}
		\begin{minipage}[t b c]{\textwidth}
			\begin{center}
				\begin{tabular}{cc}
					\begin{picture}(2,2)
						\put(1,1){\line(3,5){0.5145}}
						\put(1,1){\line(3,-5){0.5145}}
						\put(1,1){\line(-1,0){1}}
						\put(1,1){\circle*{0.05}}
						\put(0,1){\circle*{0.05}}
						\put(1.5145,1.8575){\circle*{0.05}}
						\put(1.5145,0.1425){\circle*{0.05}}
						\put(0,0.85){$e$}
						\put(0.9,0.85){$u$}
						\put(0.5,1.05){$l$}
						\put(1.5645,0.1425){$x$}
						\put(1.3,0.5713){$m-l$}
						\put(1.5645,1.8575){$y$}
						\put(1.3,1.35){$n-l$}
					\end{picture}
					&
					\begin{picture}(2,2)
						\put(1,1){\line(5,3){0.8575}}
						\put(1,1){\line(-5,3){0.8575}}
						\put(1,1){\line(0,-1){1}}
						\put(1,1){\circle*{0.05}}
						\put(1,0){\circle*{0.05}}
						\put(1.8575,1.5145){\circle*{0.05}}
						\put(0.1425,1.5145){\circle*{0.05}}
						\put(1.05,0){$s$}
						\put(1.05,0.85){$v$}
						\put(1.05,0.4){$m-l$}
						\put(0.1925,1.5145){$e$}
						\put(0.55,1.35){$l$}
						\put(1.9075,1.5145){$t$}
						\put(1.45,1.15){$n-l$}
					\end{picture}
				\end{tabular}
			\end{center}
			\caption{Shortest route between the points $(e,x,y)$ and $(e,s,t)$.}
			\label{rotation}
		\end{minipage}
	\end{figure}
	Particularly, the three shortest routes from $u$ to $e$, $x$ and $y$ starts on three different edges from $u$, and the same holds for the three routes from $v$ to $e$, $s$ and $t$. Since the Cayley graph of $\Gamma$ is a homogeneous tree, there exists $k\in\K$ such that
	\begin{equation*}
		k(u)=v,\quad k(x)=s\quad\mbox{and}\quad k(y)=t,
	\end{equation*}
	which proves the lemma. In the above argument it was implicitly assumed that $l\geq1$, $m-l\geq1$ and $n-l\geq1$, but it is easy to see that a modified argument can be used if this is not the case.
\end{proof}
\begin{lemma}
	\label{lemmaB}
	For every function $h:\Gamma\to\C$
	\begin{equation*}
		h^\natural(y^{-1}x)=\int_\K h(k(y)^{-1}k(x))\dd\mu_\K(k)\qquad(x,y\in\Gamma).
	\end{equation*}
\end{lemma}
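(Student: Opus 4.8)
The plan is to compare the two probability measures on the finite set $\EE_{y^{-1}x}$ obtained by pushing $\mu_\K$ forward along $k\mapsto k(y^{-1}x)$ and along $\Psi\colon k\mapsto k(y)^{-1}k(x)$, and to show that both are the normalized counting measure. For the first map this is already in hand: the formula $h^\natural(w)=\int_\K h(k(w))\,\dd\mu_\K(k)$ noted above (together with the fact that $\K$ acts transitively on $\EE_{y^{-1}x}$ and $\mu_\K$ is invariant) shows $h^\natural(y^{-1}x)=\int_\K h(k(y^{-1}x))\,\dd\mu_\K(k)$ with the push-forward of $\mu_\K$ uniform on $\EE_{y^{-1}x}$. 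So the task reduces to proving that $\Psi_*\mu_\K$ is also uniform on $\EE_{y^{-1}x}$. First one checks $\Psi$ does take values in $\EE_{y^{-1}x}$: since $k$ is an isometry fixing $e$, $|k(y)^{-1}k(x)|=\dd(k(x),k(y))=\dd(x,y)=|y^{-1}x|$.

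The key device is to factor $\Psi$ through the set $W=\setw{(s,t)\in\EE_x\times\EE_y}{\dd(s,t)=\dd(x,y)}$, which is finite because the tree is locally finite. Let $F\colon\K\to W$ be $F(k)=(k(x),k(y))$; it lands in $W$ because $k$ preserves word length and distance. By Lemma~\ref{LemmaA}, $F$ is surjective, and applying the same lemma to arbitrary pairs in $W$ shows $\K$ acts transitively on $W$ via $\kappa\cdot(s,t)=(\kappa(s),\kappa(t))$. Since the left translations $k\mapsto\kappa k$ of $\K$ are $\mu_\K$-preserving bijections that permute the fibres $F^{-1}(s,t)$ among themselves (transitivity moving any fibre onto any other), all fibres of $F$ have the same measure, so $F_*\mu_\K$ is the normalized counting measure on $W$. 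As $\Psi=\Phi\circ F$ with $\Phi(s,t)=t^{-1}s$, we get $(\Psi_*\mu_\K)(\{z\})=|\Phi^{-1}(z)\cap W|/|W|$ for $z\in\EE_{y^{-1}x}$, and it remains to show this numerator does not depend on $z$.

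To count $\Phi^{-1}(z)\cap W$, observe that $(s,t)\in W$ with $t^{-1}s=z$ forces $s=tz$, while the requirement $\dd(s,t)=\dd(x,y)$ becomes $|z|=\dd(x,y)$, which holds automatically for $z\in\EE_{y^{-1}x}$; hence $t\mapsto(tz,t)$ identifies $\Phi^{-1}(z)\cap W$ with $\setw{t\in\Gamma}{|t|=|y|,\ |tz|=|x|}$. Putting $u=t^{-1}$ and using left-invariance of $\dd$, this has the same cardinality as $\setw{u\in\Gamma}{\dd(e,u)=|y|,\ \dd(u,z)=|x|}$, the set of vertices at prescribed distances from the ordered pair $(e,z)$. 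Given another $z'\in\EE_{y^{-1}x}$, transitivity of $\K$ on $\EE_{y^{-1}x}$ yields $\kappa\in\K$ with $\kappa(z)=z'$, and since $\kappa$ is a distance-preserving bijection of the vertex set fixing $e$ it carries the set for $z$ bijectively onto the set for $z'$. Thus $|\Phi^{-1}(z)\cap W|$ equals a constant $N$ independent of $z$, and summing over $z\in\EE_{y^{-1}x}$ gives $|W|=N\,|\EE_{y^{-1}x}|$; therefore $(\Psi_*\mu_\K)(\{z\})=1/|\EE_{y^{-1}x}|$ for every $z$, which is exactly the required statement.

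The main obstacle is this last point: because elements of $\K$ are tree isometries, not left translations of $\Gamma$, the group $\K$ does \emph{not} act on $\EE_{y^{-1}x}$ compatibly with $\Psi$, so one cannot directly rotate the fibre over $z$ onto the fibre over $z'$. Lifting to $W$ restores compatibility for the $F$-part of the argument, and the residual discrepancy is absorbed by the combinatorial count above, which only uses that any two vertices of the tree at a fixed distance are $\K$-equivalent (a consequence of Lemma~\ref{LemmaA}). Measurability causes no trouble: each evaluation $k\mapsto k(x)$ is locally constant on the compact group $\K$, so all sets above are clopen, and all sums over $\EE_x,\EE_y,\EE_{y^{-1}x}$ are finite.
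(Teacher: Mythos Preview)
Your proof is correct and follows essentially the same architecture as the paper's: both factor the map $k\mapsto k(y)^{-1}k(x)$ through the finite orbit $W$ (the paper calls it $B$), invoke Lemma~\ref{LemmaA} to get transitivity of $\K$ on $W$ and hence uniformity of $F_*\mu_\K$, and then reduce to showing that the fibres $\Phi^{-1}(z)\cap W$ have constant cardinality.

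The one genuine difference is in how this last cardinality statement is established. The paper observes that $|\Phi^{-1}(z)\cap W|=(\unit_{\EE_y}*\unit_{\EE_x})(z)$ and appeals to the standing fact that $\cont_\cpt(\Gamma)^\natural$ is a (commutative) convolution algebra, so the convolution of two radial indicators is radial. You instead rewrite the fibre count as $|\{u:\dd(e,u)=|y|,\ \dd(u,z)=|x|\}|$ and transport it by a $\K$-rotation sending $z$ to $z'$. Your argument is slightly more self-contained, since it does not quote the closure of $\cont_\cpt(\Gamma)^\natural$ under convolution but rather reproves the relevant instance directly; the paper's version is terser and ties the fact back to the algebra structure already in play. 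Both are equally valid and equally short.
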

\begin{proof}
	Consider two fixed elements $x,y\in\Gamma$. Notice that
	\begin{equation*}
		\dd(k(x),k(y))=\dd(x,y)\qquad(k\in\K)
	\end{equation*}
	and therefore
	\begin{equation*}
		|k(y)^{-1}k(x)|=|y^{-1}x|\qquad(k\in\K),
	\end{equation*}
	which by~\eqref{a13.11} can be expressed as
	\begin{equation*}
		k(y)^{-1}k(x)\in\EE_{y^{-1}x}\qquad(k\in\K).
	\end{equation*}
	Put
	\begin{equation*}
		\A_z=\setw{k\in\K}{k(y)^{-1}k(x)=z}\qquad(z\in\EE_{y^{-1}x}).
	\end{equation*}
	Then $\K$ is equal to the disjoint union
	\begin{equation}
		\label{a13.13}
		\K=\bigsqcup_{z\in\EE_{y^{-1}x}}\A_z.
	\end{equation}
	Hence,
	\begin{equation}
		\label{a13.14}
		\int_\K h(k(y)^{-1}k(x))\dd\mu_\K(k)=\sum_{z\in\EE_{y^{-1}x}}h(z)\mu_\K(\A_z).
	\end{equation}
	Thus, in order to prove Lemma~\ref{lemmaB} we must show that
	\begin{equation*}
		\mu_\K(\A_z)=\frac{1}{|\EE_{y^{-1}x}|}\qquad(z\in\EE_{y^{-1}x}).
	\end{equation*}
	Put now
	\begin{equation*}
		\BB=\setw{(s,t)\in\Gamma\times\Gamma}{|s|=|x|,\,|t|=|y|,\,|t^{-1}s|=|y^{-1}x|}.
	\end{equation*}
	Then $\BB$ is a finite subset of $\Gamma\times\Gamma$, which is invariant under the action of $\K$ on $\Gamma\times\Gamma$ given by
	\begin{equation*}
		(s,t)\mapsto(k(s),k(t))\qquad(k\in\K,\, s,t\in\Gamma).
	\end{equation*}
	Moreover, by Lemma~\ref{LemmaA} this action is transitive on $\BB$. Therefore, each of the sets
	\begin{equation*}
		\A_{s,t}=\setw{k\in\K}{k(x)=s,\,k(y)=t}\qquad((s,t)\in\BB)
	\end{equation*}
	has the same Haar measure in $\K$, and since $\K$ is the disjoint union of all the sets $A_{s,t}$,
	\begin{equation}
		\label{a13.15}
		\mu_\K(\A_{s,t})=\frac{1}{|\BB|}\qquad((s,t)\in\BB).
	\end{equation}
	Put
	\begin{equation*}
		\BB_z=\setw{(s,t)\in\BB}{t^{-1}s=z}\qquad(z\in\EE_{y^{-1}x}).
	\end{equation*}
	Then
	\begin{equation*}
		\A_z=\bigsqcup_{(s,t)\in\BB_z}\A_{s,t}\qquad(z\in\EE_{y^{-1}x}),
	\end{equation*}
	so by~\eqref{a13.15}
	\begin{equation*}
		\mu_\K(\A_z)=\frac{|\BB_z|}{|B|}\qquad(z\in\EE_{y^{-1}x}).
	\end{equation*}
	Let $\unit_{\EE_x}$ and $\unit_{\EE_y}$ denote the characteristic functions of ${\EE_x}$ and ${\EE_y}$, respectively. Then for $z\in\EE_{y^{-1}x}$,
	\begin{eqnarray*}
		|\BB_z| & = & |\setw{(s,t)\in\EE_x\times\EE_y}{t^{-1}s=z}|\\
		& = & |\setw{t\in\Gamma}{t\in\EE_y,\,t z\in\EE_x}|\\
		& = & \sum_{t\in\Gamma}\unit_{\EE_y}(t)\unit_{\EE_x}(t z) \\
		& = & (\unit_{\EE_y}*\unit_{\EE_x})(z),
	\end{eqnarray*}
	where we have used that $\EE_y^{-1}=\EE_y$. Since the set $\cont_\cpt(\Gamma)^\natural$ of radial functions with finite support on $\Gamma$ form an abelian algebra with respect to convolution, $\unit_{\EE_y}*\unit_{\EE_x}$ is radial, and hence $|\BB_z|$ is independent of $z\in\EE_{y^{-1}x}$. Thus, by~\eqref{a13.13} we have
	\begin{equation*}
		\mu_\K(\A_z)=\frac{1}{|\EE_{y^{-1}x}|}\qquad(z\in\EE_{y^{-1}x}),
	\end{equation*}
	which together with~\eqref{a13.14} proofs Lemma~\ref{lemmaB}.
\end{proof}
\begin{proposition}
	\label{3.3.10}
	If $\varphi\in\MoA(\Gamma)$, then $\varphi^\natural\in\MoA(\Gamma)$ and
	\begin{equation*}
		\|\varphi^\natural\|_{\MoA(\Gamma)}\leq\|\varphi\|_{\MoA(\Gamma)}.
	\end{equation*}
\end{proposition}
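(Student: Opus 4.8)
The plan is to read off the result from the Bo{\.z}ejko--Fendler--Jolissaint characterisation of completely bounded Fourier multipliers (Proposition~\ref{Gilbert0}) combined with the integral formula for the radialisation supplied by Lemma~\ref{lemmaB}. First I would fix $\varphi\in\MoA(\Gamma)$, put $k=\|\varphi\|_{\MoA(\Gamma)}$, and invoke Proposition~\ref{Gilbert0}~{\it (iii)} to obtain a Hilbert space $\Hil$ and bounded maps $P,Q\colon\Gamma\to\Hil$ with $\varphi(y^{-1}x)=\ip{P(x)}{Q(y)}_\Hil$ for all $x,y\in\Gamma$ and $\|P\|_\infty\|Q\|_\infty\leq k$; continuity of $P,Q$ is automatic since $\Gamma$ is discrete.

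The idea is then to average this Gilbert pair over the compact group $\K$. I would work in $\widetilde\Hil=L^2(\K,\mu_\K;\Hil)$ and define $\widetilde P(x),\widetilde Q(x)\colon\K\to\Hil$ by $\widetilde P(x)(k)=P(k(x))$ and $\widetilde Q(x)(k)=Q(k(x))$. One has to check these lie in $\widetilde\Hil$: for each fixed $x$ the map $k\mapsto k(x)$ takes only the finitely many values in $\EE_x$ and each level set $\setw{k\in\K}{k(x)=z}$ is closed, hence Borel, so $\widetilde P(x)$ and $\widetilde Q(x)$ are $\Hil$-valued simple functions. Since $\mu_\K(\K)=1$ we get $\|\widetilde P(x)\|_{\widetilde\Hil}^2=\int_\K\|P(k(x))\|_\Hil^2\dd\mu_\K(k)\leq\|P\|_\infty^2$, and similarly for $\widetilde Q$, whence $\|\widetilde P\|_\infty\|\widetilde Q\|_\infty\leq k$.

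It remains to identify the associated kernel. Using that $\ip{P(k(x))}{Q(k(y))}_\Hil=\varphi\big(k(y)^{-1}k(x)\big)$ for every $k\in\K$ and then Lemma~\ref{lemmaB} with $h=\varphi$, one finds
\begin{equation*}
	\ip{\widetilde P(x)}{\widetilde Q(y)}_{\widetilde\Hil}=\int_\K\varphi\big(k(y)^{-1}k(x)\big)\dd\mu_\K(k)=\varphi^\natural(y^{-1}x)\qquad(x,y\in\Gamma).
\end{equation*}
Feeding the triple $(\widetilde\Hil,\widetilde P,\widetilde Q)$ back into Proposition~\ref{Gilbert0} in the direction {\it (iii)}$\Rightarrow${\it (i)} then gives $\varphi^\natural\in\MoA(\Gamma)$ with $\|\varphi^\natural\|_{\MoA(\Gamma)}\leq k=\|\varphi\|_{\MoA(\Gamma)}$. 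I do not expect a genuine obstacle: the only step requiring any care is the measurability/integrability bookkeeping on $\K$, which is harmless because $\K$ is compact with a probability Haar measure and all the functions of $k$ in sight take only finitely many values.
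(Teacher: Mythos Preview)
Your proof is correct and follows essentially the same approach as the paper: both invoke Proposition~\ref{Gilbert0} to obtain a Gilbert pair $(P,Q)$, average it over $\K$ inside $L^2(\K,\mu_\K;\Hil)$, and then apply Lemma~\ref{lemmaB} to identify the resulting kernel with $\varphi^\natural(y^{-1}x)$. The only cosmetic difference is that the paper disposes of measurability by noting that $k\mapsto P(k(x))$ is continuous, whereas you observe it is a simple function; both are valid.
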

\begin{proof}
	Assume that $\varphi\in\MoA(\Gamma)$ and use Proposition~\ref{Gilbert0} to find a Hilbert space $\Hil$ and bounded maps $\PP,\QQ:\Gamma\to\Hil$ such that
	\begin{equation*}
		\varphi(y^{-1}x)=\ip{\PP(x)}{\QQ(y)}_{\Hil}\qquad(x,y\in\Gamma)
	\end{equation*}
	and
	\begin{equation*}
		\|\PP\|_\infty\|\QQ\|_\infty=\|\varphi\|_{\MoA(\Gamma)}.
	\end{equation*}
	Put $\widetilde\Hil=\ELL^2(K,\Hil,\mu_\K)$ and define $\widetilde\PP,\widetilde\QQ:\Gamma\to\widetilde\Hil$ by
	\begin{equation*}
		(\widetilde\PP(x))(k)=\PP(k(x))\qquad(x\in\Gamma,\,k\in\K)
	\end{equation*}
	and
	\begin{equation*}
		(\widetilde\QQ(y))(k)=\QQ(k(y))\qquad(y\in\Gamma,\,k\in\K).
	\end{equation*}
	For fixed $x,y\in\Gamma$ the maps $\widetilde\PP(x)$ and $\widetilde\QQ(y)$ are continuous and therefore measurable. Moreover, the norms of $\widetilde\PP(x)$ and $\widetilde\QQ(y)$ satisfy
	\begin{equation*}
		\|\widetilde\PP(x)\|^2=\int_{\K}\|\PP(k(x))\|_{\Hil}^2\dd\mu_\K(k)\leq\|\PP\|_\infty^2\qquad(x\in\Gamma)
	\end{equation*}
	and
	\begin{equation*}
		\|\widetilde\QQ(y)\|^2=\int_{\K}\|\QQ(k(y))\|_{\Hil}^2\dd\mu_\K(k)\leq\|\QQ\|_\infty^2\qquad(y\in\Gamma).
	\end{equation*}
	According to Lemma~\ref{lemmaB},
	\begin{equation*}
			\varphi^\natural(y^{-1}x)=\int_\K\ip{\PP(k(x))}{\QQ(k(y))}_{\Hil}\dd\mu_\K(k)=\ip{\widetilde\PP(x)}{\widetilde\QQ(y)}_{\widetilde\Hil}\qquad(x,y\in\Gamma),
	\end{equation*}
	from which we conclude, using Proposition~\ref{Gilbert0}, that $\varphi^\natural\in\MoA(\Gamma)$ with
	\begin{equation*}
		\|\varphi^\natural\|_{\MoA(\Gamma)}\leq\|\widetilde\PP\|_\infty\|\widetilde\QQ\|_\infty\leq\|\PP\|_\infty\|\QQ\|_\infty=\|\varphi\|_{\MoA(\Gamma)}.
	\end{equation*}
\end{proof}
We have now established~\eqref{3.6'}--\eqref{3.8'} and therefore finished the proof of Theorem~\ref{Haagerup}.
\begin{corollary}
	\label{inftycase}
	Consider a countable discrete group $\Gamma'$ which has a subgroup $\Gamma$ of the form~\eqref{freeconvolution1}. There is a completely bounded Fourier multiplier of $\Gamma'$ which is not the coefficient of a uniformly bounded representation of $\Gamma'$.
\end{corollary}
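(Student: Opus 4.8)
The plan is to reduce to Theorem~\ref{Haagerup} by transporting a bad completely bounded Fourier multiplier from the subgroup $\Gamma$ up to $\Gamma'$, exploiting that completely bounded Fourier multipliers extend by zero from a subgroup while coefficients of uniformly bounded representations restrict to subgroups. No continuity hypotheses will intervene, since both $\Gamma$ and $\Gamma'$ carry the discrete topology and hence every representation is automatically strongly continuous; in particular Proposition~\ref{Gilbert0} applies verbatim.

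First I would invoke Theorem~\ref{Haagerup} to obtain $\varphi\in\MoA(\Gamma)$ which is not the coefficient of a uniformly bounded representation of $\Gamma$, and define $\tilde\varphi:\Gamma'\to\C$ by $\tilde\varphi(g)=\varphi(g)$ for $g\in\Gamma$ and $\tilde\varphi(g)=0$ for $g\in\Gamma'\setminus\Gamma$. To see $\tilde\varphi\in\MoA(\Gamma')$ I would build the Hilbert space realization demanded by Proposition~\ref{Gilbert0}~(iii). Using that characterization for $\varphi$, fix a Hilbert space $\Hil$ and bounded maps $\PP,\QQ:\Gamma\to\Hil$ with $\varphi(y^{-1}x)=\ip{\PP(x)}{\QQ(y)}$ for $x,y\in\Gamma$ and $\|\PP\|_\infty\|\QQ\|_\infty=\|\varphi\|_{\MoA(\Gamma)}$. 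Choose a transversal $T$ for the left cosets of $\Gamma$ in $\Gamma'$ with $e\in T$, so each $g\in\Gamma'$ is uniquely $g=t(g)h(g)$ with $t(g)\in T$ and $h(g)\in\Gamma$; note $T$ is countable because $\Gamma'$ is. The key elementary observation is that for $x,y\in\Gamma'$ one has $y^{-1}x\in\Gamma$ exactly when $t(x)=t(y)$, and in that case $y^{-1}x=h(y)^{-1}h(x)$. Hence, putting $\widetilde\Hil=\Hil\otimes\ell^2(T)$ and $\widetilde\PP(x)=\PP(h(x))\otimes\delta_{t(x)}$, $\widetilde\QQ(y)=\QQ(h(y))\otimes\delta_{t(y)}$, a one-line computation gives $\ip{\widetilde\PP(x)}{\widetilde\QQ(y)}=\tilde\varphi(y^{-1}x)$ for all $x,y\in\Gamma'$, with $\|\widetilde\PP\|_\infty=\|\PP\|_\infty$ and $\|\widetilde\QQ\|_\infty=\|\QQ\|_\infty$. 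By Proposition~\ref{Gilbert0} this yields $\tilde\varphi\in\MoA(\Gamma')$ (indeed $\|\tilde\varphi\|_{\MoA(\Gamma')}=\|\varphi\|_{\MoA(\Gamma)}$, though only finiteness is needed here).

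Finally, suppose for contradiction that $\tilde\varphi$ is the coefficient of a uniformly bounded representation $(\pi,\Hil_\pi)$ of $\Gamma'$, say $\tilde\varphi(g)=\ip{\pi(g)\xi}{\eta}$ for $g\in\Gamma'$. Then $\pi|_{\Gamma}$ is a uniformly bounded representation of $\Gamma$ with $\|\pi|_{\Gamma}\|\le\|\pi\|$, and for $h\in\Gamma$ we have $\varphi(h)=\tilde\varphi(h)=\ip{\pi|_{\Gamma}(h)\xi}{\eta}$, exhibiting $\varphi$ as a coefficient of a uniformly bounded representation of $\Gamma$ and contradicting the choice of $\varphi$. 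Thus $\tilde\varphi$ is the desired completely bounded Fourier multiplier of $\Gamma'$. I do not anticipate a genuine obstacle: the only point requiring a moment's care is ordering the coset bookkeeping so that membership in $\Gamma$ is detected precisely by the transversal coordinate $t(\cdot)$, after which the rest is the routine "restrict and reach a contradiction" argument.
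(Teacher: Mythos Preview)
Your proof is correct and follows the same strategy as the paper: extend a bad $\varphi\in\MoA(\Gamma)$ by zero to $\Gamma'$, then restrict a hypothetical uniformly bounded representation of $\Gamma'$ back to $\Gamma$ to reach a contradiction. The only difference is that where the paper simply cites \cite[Lemma~1.2]{BF:Herz-SchurMultipliersAndUniformlyBoundedRepresentationsOfDiscreteGroups} for the fact that the zero-extension $\tilde\varphi$ lies in $\MoA(\Gamma')$, you supply a direct proof via Proposition~\ref{Gilbert0}, using a coset transversal $T$ and tensoring with $\ell^2(T)$; this is exactly the standard proof of that lemma, and your coset bookkeeping is correct (the observation $y^{-1}x\in\Gamma\iff t(x)=t(y)$, and then $y^{-1}x=h(y)^{-1}h(x)$, is what makes the inner-product computation go through). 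The countability remark is harmless but unnecessary, since $\ell^2(T)$ is a Hilbert space for any index set and Proposition~\ref{Gilbert0} imposes no separability requirement.
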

\begin{proof}
	Let $\varphi$ be a completely bounded Fourier multiplier of $\Gamma$ which is not the coefficients of any uniformly bounded representation of $\Gamma$. Let $\varphi'$ be the extension of $\varphi$ to $\Gamma'$ by zero outside $\Gamma$. According to Bo{\.z}ejko and Fendler (cf.~\cite[Lemma~1.2]{BF:Herz-SchurMultipliersAndUniformlyBoundedRepresentationsOfDiscreteGroups}) $\varphi'$ is a completely bounded Fourier multiplier of $\Gamma'$. If $\varphi'$ was the coefficient of a uniformly bounded representation $(\pi',\Hil')$ of $\Gamma'$, then the restriction of this representation to $\Gamma$ would give a contradiction with the choice of $\varphi$.
\end{proof}
\begin{remark}
	From Corollary~\ref{inftycase} it follows in particular that there is a completely bounded Fourier multiplier of $\F_\infty$ which is not the coefficient of a uniformly bounded representation of $\F_\infty$, where $\F_\infty$ is the free group on infinitely many generators.
\end{remark}

	\subsection*{Acknowledgments}
The author wishes to express his thanks to Uffe Haagerup for many helpful conversations and for granting access to his unpublished manuscript.

	\bibliography{troelsBibliography}
	\contrib{Troels Steenstrup}{troelssj@imada.sdu.dk}\smallskip\\
Department of Mathematics and Computer Science, University of Southern Denmark, Campusvej~55, DK--5230~Odense~M, Denmark.\\

\end{document}